\newcommand{\nc}{\newcommand}
\nc{\dmo}{\DeclareMathOperator}
\dmo{\Frac}{Frac}
\dmo{\Proj}{Proj}
\dmo{\Res}{Res}
\dmo{\Coh}{Coh}
\dmo{\QCoh}{QCoh}
\dmo{\Inj}{Inj}
\dmo{\uIsom}{\underline{Isom}}
\dmo{\uHom}{\underline{Hom}}
\nc{\gl}{\mathfrak{gl}}
\nc{\gen}{\mathrm{gen}}
\nc{\el}{\mathrm{el}}
\nc{\vis}{\mathrm{vis}}
\nc{\bone}{\mathbf{1}}
\nc{\PreMod}{\mathrm{PreMod}}
\nc{\pl}{\mathrm{pl}}
\nc{\rep}{\mathrm{rep}}
\nc{\supprk}{\operatorname{supp.rk.}}
\nc\ylw{\Yfillcolour{black}}
\nc\wht{\Yfillcolour{white}}
\nc{\coloneq}{\mathrel{\mathop:}\mkern-1.2mu=}
\nc{\abs}[1]{|#1|}
\nc{\inn}{\mathrm{in}}
\nc{\FI}{\mathbf{FI}}
\nc{\FS}{\mathbf{FS}}
\dmo{\FP}{\cC}
\dmo{\wA}{\Sym( \bC^\infty \otimes \bC^\infty)}
\dmo{\ssA}{\Sym(\Sym^2 \bC^\infty)}
\dmo{\swA}{\Sym(\bigwedge^2 \bC^\infty)}
\dmo{\wsA}{\bigwedge(\Sym^2 \bC^\infty)}
\dmo{\wwA}{\bigwedge(\bigwedge^2 \bC^\infty)}
\dmo{\allA}{A}
\dmo{\ssub}{\mathfrak{ssub}}
\dmo{\swub}{\mathfrak{swub}}
\dmo{\wub}{\mathfrak{wub}}
\dmo{\wsub}{\mathfrak{wsub}}
\dmo{\wwub}{\mathfrak{wwub}}
\nc{\ML}[2]{P_{[#1]}([#2])}
\nc{\MS}[1]{P_{[#1]}}
\nc{\shift}[1]{\mathcal{S}_{+#1}}
\nc{\symm}[1]{\mathfrak{S}_{#1}}
\nc{\spcht}[1]{\bM_{#1}}
\author{Rohit Nagpal}
\address{Department of Mathematics, University of Wisconsin, Madison, WI}
\curraddr{Department of Mathematics, The University of Chicago, Chicago, IL}
\email{\href{mailto:nagpal@math.uchicago.edu}{nagpal@math.uchicago.edu}}
\urladdr{\url{http://math.uchicago.edu/~nagpal/}}
\author{Steven V Sam}
\address{Department of Mathematics, University of California, Berkeley, CA}
\curraddr{Department of Mathematics, University of Wisconsin, Madison, WI}
\email{\href{mailto:svs@math.wisc.edu}{svs@math.wisc.edu}}
\urladdr{\url{http://math.wisc.edu/~svs/}}
\author{Andrew Snowden}
\address{Department of Mathematics, University of Michigan, Ann Arbor, MI}
\email{\href{mailto:asnowden@umich.edu}{asnowden@umich.edu}}
\urladdr{\url{http://www-personal.umich.edu/~asnowden/}}
\thanks{SS was supported by a Miller research fellowship. AS was supported by NSF grant DMS-1303082}
\subjclass[2010]{%
13E05, %Noetherian rings and modules
13A50.%Actions of groups on commutative rings; invariant theory
}
\title[Noetherianity of some degree two tca's]{Noetherianity of some degree two\\ twisted commutative algebras}
\date{September 18, 2015}
\begin{document}

\maketitle

\begin{abstract}
The resolutions of determinantal ideals exhibit a remarkable stability property: for fixed rank but growing dimension, the terms of the resolution stabilize (in an appropriate sense). One may wonder if other sequences of ideals or modules over coordinate rings of matrices exhibit similar behavior. We show that this is indeed the case. In fact, our main theorem is more fundamental in nature: it states that certain large algebraic structures (which are examples of twisted commutative algebras) are noetherian. These are important new examples of large noetherian algebraic structures, and ones that are in some ways quite different from previous examples.
\end{abstract}

\tableofcontents

\section{Introduction}

Let $A_n$ be the coordinate ring of the space of symmetric bilinear forms on $\bC^n$, that is, $\Sym(\Sym^2(\bC^n))$. Inside of $\Spec(A_n)$ is the closed subset $V(I_{n,r})$ of forms of rank at most $r$ defined by the determinantal ideal $I_{n,r}$. The resolution of $A_n/I_{n,r}$ over $A_n$ is explicitly known by a classical result of Lascoux \cite{lascoux} (see also \cite[Chapter 6]{weyman}). The explicit description of the resolution reveals an interesting feature: its terms stabilize as $n$ grows. More precisely, the decomposition of $\Tor_{A_n}^p(A_n/I_{n,r}, \bC)$ into irreducible representations of $\GL_n$ is independent of $n$ for $n \gg p,r$, when one appropriately identifies irreducibles of $\GL_n$ with a subset of those for $\GL_{n+1}$.

Given this observation, one may wonder if the same phenomenon holds true more generally. That is, suppose that for each $n \ge 0$ we have a finitely generated $\GL_n$-equivariant $A_n$-module $M_n$ such that the $M_{\bullet}$ are ``compatible'' in an appropriate sense. Do the resolutions of the $M_n$ stabilize?

The main result of this paper (Theorem~\ref{mainthm}) implies that the answer to this question is ``yes.'' In fact, Theorem~\ref{mainthm} establishes a more fundamental result: compatible sequences of finitely generated equivariant $A_n$-modules are ``noetherian'' in an appropriate sense.

\subsection{Statement of results}

Instead of working with a compatible sequence of $A_n$-modules, we prefer to pass to the limit in $n$ and work with a single module over the ring $\Sym(\Sym^2(\bC^{\infty}))$. This ring, with its $\GL_{\infty}$ action, is an example of a {\bf twisted commutative algebra (tca)}; see \S \ref{sec:defn} for the general definition. Given a tca $A$, there is a notion of (finitely generated) $A$-module, and $A$ is said to be {\bf noetherian} if any submodule of a finitely generated $A$-module is again finitely generated.

Our main result is the following theorem:

\begin{theorem}
\label{mainthm}
The tca's $\Sym(\Sym^2(\bC^{\infty}))$ and $\Sym(\lw^2(\bC^{\infty}))$ are noetherian.
\end{theorem}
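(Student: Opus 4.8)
My plan is first to recast the statement in terms of representations of combinatorial categories, in the spirit of the identification of $\Sym(\bC^\infty)$-modules with $\FI$-modules. A module over $\ssA$ should be the same datum as a representation of a category $\ssub$ whose objects are finite sets and whose morphisms $S\to T$ consist of an injection $S\hookrightarrow T$ together with a perfect matching on $T\setminus S$ (composition combines injections and takes disjoint unions of matchings); this merely repackages the quadratic multiplication on $\ssA$, Cauchy's formula identifying the degree-$2n$ piece of $\ssA$ with the span of perfect matchings of $[2n]$. Likewise $\swA$-modules become representations of a sign-decorated variant $\swub$, and it is convenient to also bring in the larger tca $\wA=\Sym(\bC^\infty\otimes\bC^\infty)$, governed by a category $\wub$ (injections decorated by a bijection between the two complements); since $\bC^\infty\otimes\bC^\infty=\Sym^2(\bC^\infty)\oplus\lw^2(\bC^\infty)$, both $\ssA$ and $\swA$ are quotient tca's of $\wA$, so it would in fact be enough to prove $\wA$ noetherian. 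In every case ``noetherian tca'' becomes ``the representation category is locally noetherian'', and since every finitely generated representation is a quotient of a finite sum of principal projectives $\MS{n}$, it suffices to show that each $\MS{n}$ is a noetherian object. A one-step Gröbner-basis argument is not available here, since the natural divisibility order on the morphism sets --- essentially containment of chord diagrams --- does not give an evident well-partial-order.

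\textbf{Dévissage by support rank.}
The engine of the proof would be an induction on a support-rank invariant $\supprk$ --- the rank of the bilinear form ``seen'' by a module --- run through a Serre-quotient (recollement) argument. For each $r$ the representations with $\supprk\le r$ form a localizing subcategory, and the standard criterion reduces local noetherianity to three inputs: (i) the base case $r=0$, where a finitely generated torsion module is killed by a power of the augmentation ideal, hence is a finite-length object and in particular noetherian; (ii) a finiteness lemma, that the functor extracting the ``$\supprk\le r-1$ part'' preserves finite generation; and (iii) noetherianity of each graded layer $\{\supprk\le r\}/\{\supprk\le r-1\}$. For (iii) I would analyze a module along the rank-$r$ stratum, where the relevant form becomes nondegenerate on an $r$-dimensional space and can be split off; the technical device is a family of shift functors $\shift{k}$ whose behavior for $k\gg 0$ --- a symmetric- and alternating-square analogue of the shift theorem for $\FI$-modules --- presents a module as semi-induced from modules over a strictly simpler tca. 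In this way the layers are controlled from below by $\FI$-type categories and by $\wA$ itself: $\FI$ is noetherian by Church--Ellenberg--Farb and Snowden, and $\wA$ is handled by the same inductive scheme, its simpler combinatorial model $\wub$ making that induction tractable. The exact sequences among the $\MS{n}$ needed to run the induction are organized with the help of the Koszul-dual algebras $\wsA$ and $\wwA$ and of the category $\FS$ of finite sets and surjections.

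\textbf{The alternating case and assembly.}
The tca $\swA$ is treated identically, with $\swub$ in place of $\ssub$; the transpose symmetry $\lambda\leftrightarrow\lambda'$ on Schur functors --- which exchanges $\ssA=\bigoplus_{\lambda\ \mathrm{even}}\mathbf{S}_\lambda(\bC^\infty)$ with $\swA=\bigoplus_{\lambda'\ \mathrm{even}}\mathbf{S}_\lambda(\bC^\infty)$ --- ties the two together but is not needed. Once every principal projective is known to be a noetherian object, the usual closure properties (a subobject of a noetherian object is noetherian, and noetherianity passes to quotients, finite direct sums, and extensions) upgrade this to the assertion that every finitely generated $\ssA$- or $\swA$-module is noetherian, which is the theorem.

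\textbf{The main obstacle.}
I expect the crux to be steps (ii) and (iii): making the shift/generic reduction on each support-rank stratum rigorous --- pinning down exactly which simpler tca governs a given stratum and verifying that it is noetherian, so that the induction closes --- together with the finiteness lemma that the support-rank filtration has finitely generated subquotients. The way matched pairs interleave with the injective part of a morphism (and the sign bookkeeping in the alternating case) is precisely what defeats a one-step Gröbner argument and forces this stratified, inductive approach.
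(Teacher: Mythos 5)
Your overall shape---set up a torsion/generic d\'evissage via a Serre quotient and handle each piece separately---is indeed the skeleton of the paper's proof, and your observation that a one-step Gr\"obner argument is blocked is also on target (the paper's \S5 pins this down as Question~\ref{ques:grobner}, which remains open). But two of your concrete reductions go wrong, and the genuinely hard step of the argument is left as ``I expect this to be the crux'' without identifying what actually closes it.

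First, the reduction ``it suffices to prove $\wA$ noetherian'' does not work in either reading. Read as a \emph{bivariate} tca (with $\GL_\infty\times\GL_\infty$ acting), $\Sym(\bC^\infty\otimes\bC^\infty)$ does not surject onto $\Sym(\Sym^2\bC^\infty)$: the map $\bC^\infty\otimes\bC^\infty\to\Sym^2\bC^\infty$ is only equivariant for the diagonal $\GL_\infty$, so $\Sym^2\bC^\infty$ is not a quotient in the bivariate category, and bivariate noetherianity of $\wA$ says nothing about $\GL_\infty$-equivariant modules over $\ssA$. Read as a \emph{univariate} tca (restrict to the diagonal), $\bC^\infty\otimes\bC^\infty\cong\Sym^2\bC^\infty\oplus\lw^2\bC^\infty$ is a length-two degree-two representation, and the noetherianity of $\Sym(V)$ for such $V$ is precisely open problem~(1) in the paper's list---strictly harder than Theorem~\ref{mainthm}, not a way to prove it. In the paper the bivariate tca plays a different role: Raicu--Weyman's computation for it (Lemma~\ref{lem:symc11noeth}) is the explicit input that is then transferred to $\ssA$ and $\swA$ via Lemma~\ref{lem:FT} and Lemma~\ref{lem:deg2-FT}, not via a quotient of algebras.

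Second, the d\'evissage as you sketch it hits exactly the obstruction the paper flags in Remark~\ref{rmk:pfctx}: knowing that each stratum is noetherian does \emph{not} formally give noetherianity of the whole, because scheme-theoretic noetherianity does not glue across an open/closed decomposition the way topological noetherianity does. Your step~(ii)---that the section functor preserves finite generation---is the place where extra information must be injected, and the paper's solution is the (FT) property for torsion modules (Proposition~\ref{prop:tors-FT}), ultimately resting on Raicu--Weyman's explicit resolutions of $A/I_\lambda$. Without that input, the induction does not close. Relatedly, your base case is mis-stated: a finitely generated torsion module need not be killed by a power of the augmentation ideal, nor be finite length as a representation; what is true (Corollary~\ref{cor:nz-ann} plus Corollary~\ref{cor:quo-noeth}) is that it has nonzero annihilator $I$ and that $A/I$ is essentially bounded, hence noetherian. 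Finally, the paper filters only once, into torsion and generic, and identifies the generic quotient $\Mod_K$ with $\Rep(\bO_\infty)$ (Theorem~\ref{thm:modK-equiv}), whose finite-length structure from \cite{infrank} drives the finiteness lemma; a rank-by-rank stratification with shift functors in the $\FI$ style is not what is used, and you would have to confront the same (FT)-type finiteness on each layer.
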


We also prove a variant of the above result. A {\bf bivariate tca} is like a tca, but where the group $\GL_{\infty} \times \GL_{\infty}$ acts. We prove:

\begin{theorem}
\label{mainthm2}
The bivariate tca $\Sym(\bC^{\infty} \otimes \bC^{\infty})$ is noetherian.
\end{theorem}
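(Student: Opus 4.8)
The plan is to prove that the category $\mathrm{Mod}_A$ of $\GL_\infty \times \GL_\infty$-equivariant modules over $A := \Sym(\bC^\infty \otimes \bC^\infty)$ is noetherian by devissage along the stratification by rank of $\Spec(A)$, which we regard as the space of $\infty \times \infty$ matrices. The first step is to identify the $\GL_\infty \times \GL_\infty$-stable subsets in play: using the Cauchy decomposition $A = \bigoplus_\lambda \bS_\lambda(\bC^\infty) \boxtimes \bS_\lambda(\bC^\infty)$, one checks that the $\GL_\infty \times \GL_\infty$-stable radical ideals of $A$ form a single chain $0 = I_\infty \subsetneq \cdots \subsetneq I_2 \subsetneq I_1 \subsetneq I_0$, where $I_r$ is the determinantal ideal of the locus $\overline U_r$ of matrices of rank $\le r$; consequently the support of a finitely generated $A$-module is either some $\overline U_r$ with $r$ finite or all of $\Spec(A)$. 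It then suffices to establish: (i) for each finite $r$, the category of $A$-modules supported on $\overline U_r$ is noetherian; and (ii) the ``generic'' quotient $\mathrm{Mod}_A^{\gen} := \mathrm{Mod}_A / \langle \text{modules supported on some } \overline U_r \rangle$ is noetherian. Granting (i) and (ii), one invokes the standard devissage (valid in this setting because finitely generated modules have finitely generated images and sections along the Serre quotient) to conclude that $\mathrm{Mod}_A$ is noetherian; here one uses that a finitely generated torsion module is in fact supported on a single $\overline U_r$ with $r$ finite, so that (i) makes the torsion subcategory noetherian.

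For (i) the tool is Kempf collapsing. The locus $\overline U_r$ is the image of the multiplication map $\mathrm{Hom}(\bC^\infty, \bC^r) \times \mathrm{Hom}(\bC^r, \bC^\infty) \to \mathrm{Hom}(\bC^\infty, \bC^\infty)$, which is $\GL_\infty \times \GL_\infty$-equivariant and $\GL_r$-invariant. Since $\Sym(\bC^\infty \otimes \bC^r) \cong \Sym(\bC^\infty)^{\otimes r}$, the coordinate ring of the source is $R := \Sym(\bC^\infty)^{\otimes 2r}$, and the fundamental theorems of invariant theory for $\GL_r$ give $A/I_r \cong R^{\GL_r}$. Now $\Sym(\bC^\infty)$ is noetherian (this is the tca form of noetherianity of $\FI$-modules), finite tensor powers of noetherian tca's are noetherian, and $\GL_r$ is linearly reductive; hence the category of $\GL_r$-equivariant $R$-modules is noetherian, and the functor $M \mapsto M \otimes_{R^{\GL_r}} R$, together with its exact retraction $(-)^{\GL_r}$, carries this noetherianity down to $R^{\GL_r}$-modules and hence to $A$-modules supported on $\overline U_r$.

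For (ii) one must analyze $A$ near a generic matrix. The key local-structure input is that a generic $\infty \times \infty$ matrix is $\GL_\infty \times \GL_\infty$-equivalent to the identity $\sum_i e_i \otimes f_i$ --- it is invertible in every finite-dimensional model --- that the orbit of the identity is ``open'' (the differential of the action at that point already surjects onto $\bC^\infty \otimes \bC^\infty$), and that its stabilizer is the diagonal copy of $\GL_\infty$. From this one wants to deduce that $\mathrm{Mod}_A^{\gen}$ is equivalent to the category $\mathrm{Rep}^{\mathrm{pol}}(\GL_\infty)$ of polynomial representations of the diagonal $\GL_\infty$, which is semisimple and hence noetherian. (One can also reorganize the whole argument by first restricting the $\GL_\infty \times \GL_\infty$-action to the diagonal, where $\bC^\infty \otimes \bC^\infty = \Sym^2(\bC^\infty) \oplus \lw^2(\bC^\infty)$, connecting directly with Theorem~\ref{mainthm}; but the rank stratification above runs more cleanly for $\bC^\infty \otimes \bC^\infty$ itself.)

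The main obstacle is step (ii). There is no actual generic point of $\Spec(A)$ at which to localize; one works with the functorial limit of finite-dimensional models, and the $\GL_\infty \times \GL_\infty$-orbit structure ``at infinity'' of $\infty \times \infty$ matrices --- kernels and cokernels of arbitrary, possibly infinite, size, including configurations with no finite-dimensional shadow --- must be controlled before one can assert that $\mathrm{Mod}_A^{\gen}$ genuinely collapses to $\mathrm{Rep}^{\mathrm{pol}}(\GL_\infty)$. I expect this to demand the machinery of $\GL$-equivariant modules over large polynomial rings: pinning down the right slice transverse to the generic orbit, identifying the resulting module category, and, for the devissage, verifying that the section functor $\mathrm{Mod}_A^{\gen} \to \mathrm{Mod}_A$ preserves finite generation (equivalently, bounding the torsion submodule of a finitely generated module). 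By contrast the finite-rank analysis (i) and the combinatorial bookkeeping should be routine given noetherianity of $\FI$-modules, the behaviour of tensor powers, and linear reductivity.
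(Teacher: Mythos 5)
Your overall devissage skeleton --- split $\Mod_A$ into a torsion part and a generic quotient $\Mod_A^{\gen}$, handle each separately, and glue via the section functor --- matches the paper's architecture, and you are right that controlling the section functor is the crux. However, there is a genuine error at the heart of your step (ii). The stabilizer of the full-rank point in $\GL_\infty \times \GL_\infty$ is the \emph{twisted} diagonal $g \mapsto (g, {}^{t}g^{-1})$, not the plain diagonal, and restricting a polynomial representation of $\GL_\infty \times \GL_\infty$ along it produces mixed tensors $(\bC^\infty)^{\otimes a} \otimes (\bC^\infty_*)^{\otimes b}$ involving the restricted dual $\bC^\infty_*$. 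The resulting equivalence is $\Mod_K \simeq \Rep(\GL_\infty)$, the category of \emph{algebraic} representations of $\GL_\infty$, which is \emph{not} semisimple. That failure of semisimplicity is exactly what makes the section-functor analysis hard: the paper exploits finite injective resolutions from \cite{infrank} (Corollary~\ref{cor:list2}(d)) together with the (FT) property of torsion modules, itself deduced from Raicu--Weyman's explicit syzygy computations, to bound $S$. Asserting semisimplicity hides the entire difficulty of the theorem.

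On the torsion side (i), your Kempf-collapsing idea (realize $A/I_r$ as $\GL_r$-invariants of $\Sym(\bC^\infty\otimes\bC^r)\otimes\Sym(\bC^r\otimes\bC^\infty)$) is a genuinely different route from the paper's, which instead shows that $A/I$ is \emph{essentially bounded} for every nonzero $I$ and deduces noetherianity by specializing to super vector spaces $\bC^{r|s}$ (Proposition~\ref{prop:eb}). As stated, your version is incomplete: a finitely generated torsion module is only guaranteed to be annihilated by some rectangular $I_\lambda$, and when $\lambda$ has more than one row and one column, $A/I_\lambda$ is a non-reduced thickening of a determinantal variety, not covered by the reduced collapsing picture, so a separate argument is still needed for those quotients. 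The descent through the Reynolds operator also needs justification, since $(M \otimes_{R^{\GL_r}} R)^{\GL_r} = M$ is not automatic for arbitrary $M$. Finally, the suggested ``reorganization'' by restricting to the plain diagonal leads to the tca $\Sym(\Sym^2(\bC^\infty) \oplus \lw^2(\bC^\infty))$, whose noetherianity is listed in the paper as an \emph{open} problem, not a consequence of Theorem~\ref{mainthm}, so that route does not close the argument either.
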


\begin{remark}
\label{remark:FIM}
Let $\mathbf{FIM}$ be the category whose objects are finite sets and where a morphism $X \to Y$ is a pair $(f, \Gamma)$ consisting of an injection $f \colon X \to Y$ and a perfect matching $\Gamma$ on $Y \setminus f(X)$. Then the category of $\Sym(\Sym^2(\bC^{\infty}))$-modules is equivalent to the category of $\mathbf{FIM}$-modules over $\bC$ (see \cite[\S 4.3]{infrank}, where $\mathbf{FIM}$ is called the upwards Brauer category). Thus Theorem~\ref{mainthm} shows that finitely generated $\mathbf{FIM}$-modules are noetherian. This is reminiscent of the noetherianity result for $\mathbf{FI}$-modules (see \cite[Theorem~1.3]{fimodules}), but much more difficult. There are analogous reinterpretations for the other two cases.
\end{remark}

\subsection{Motivation}

We offer a few pieces of motivation for our work.
\begin{itemize}
\item Our main theorems generalize and place into the proper context the stability phenomena observed in the resolutions of determinantal ideals and related ideals (such as those considered in \cite{raicu}).
\item The algebras appearing in Theorems~\ref{mainthm} and~\ref{mainthm2} are closely related to the representation theory of orthogonal and symplectic groups; for example, see \cite{infrank} or Example~\ref{ex:brauer} below. We believe our theorems will have useful applications in this area.
\item The tca's we consider provide important new additions to the growing list of large noetherian algebraic structures; see \S \ref{ss:connection} for further discussion.
\item $\mathbf{FIM}$-modules are formally very similar to the $\FI$-modules studied in \cite{fimodules,fi-noeth}. Numerous examples of $\FI$-modules have been found, and the noetherian property for $\FI$-modules often translates to interesting new theorems about the examples (e.g., representation stability in the cohomology of configuration spaces). We do not currently have analogous examples of $\mathbf{FIM}$-modules, but when examples are found (which we expect), Theorem~\ref{mainthm} will yield interesting new results about them.
\end{itemize}

\begin{example} \label{ex:brauer}
For $\delta \in \bC$ define the {\bf Brauer category} $B(\delta)$ as follows: objects are finite sets, and morphisms are Brauer diagrams, where composition of Brauer diagrams uses the parameter $\delta$. One can regard $\mathbf{FIM}$ as a subcategory of $B(\delta)$, and from this one can deduce noetherianity of $B(\delta)$-modules from Theorem~\ref{mainthm}. Suppose that $\delta=n-m$ with integers $n$ and $m$. Then one obtains an interesting $B(\delta)$-module by $S \mapsto (\bC^{n|m})^{\otimes S}$, where $\bC^{n|m}$ is the super vector space of the indicated super dimension. This module is closely connected to the representation theory of the orthosymplectic Lie algebra $\fosp(n|m)$.  Our theorem shows that any submodule of this module is finitely generated. The second and third author plan to study $B(\delta)$-modules more closely in a future paper, and the noetherian property will be of foundational importance.
\end{example}

\subsection{Connection to previous work}
\label{ss:connection}

Theorem~\ref{mainthm} fits into a theme that has emerged in recent years where large algebraic structures have been found to be noetherian. See \cite{aschenbrennerhillar, cohen, hillarsullivant} for examples of $S_\infty$-equivariant polynomial rings. Some other examples include $\Delta$-modules \cite{delta-mod}, $\FI$-modules \cite{fimodules,fi-noeth} (see also \cite{symc1}), $\FS$-modules \cite{catgb}, $\mathtt{VIC}(R)$-modules \cite{putnamsam}, and certain spaces of infinite matrices \cite{draismakuttler, draismaeggermont, eggermont}.

However, the noetherian results of this paper seem fundamentally more difficult than the previous ones. We do not know how to make this observation precise, but offer the following observation. One can almost always use Gr\"obner bases to reduce a noetherianity problem in algebra to one in combinatorics (see \cite{catgb}). In the previous noetherianity results, the combinatorial problems ultimately concern words in a formal language, and can be easily solved using Higman's lemma. In contrast, the combinatorial problem that naturally arises in the present case (Question~\ref{ques:grobner}) is graph-theoretic, and does not seem approachable by Higman's lemma. Alternatively, this division can be seen in terms of the asymptotics of Hilbert functions: in the previous noetherian results, the Hilbert functions have exponential growth, while in the present case the growth is super-exponential.

Due to this fundamental new difficulty, we have been forced to introduce new methods to prove the main theorem. We believe these will be useful more generally.

\subsection{Outline of proof}
\label{subsection:outline}
We now outline the proof of noetherianity for $A=\Sym(\Sym^2(\bC^{\infty}))$. Let $K=\Frac(A)$ and let $\Mod_A^{\tors}$ be the category of torsion $A$-modules, where we say that an $A$-module $M$ is torsion if $M \otimes_A K=0$. If $I$ is a non-zero ideal of $A$ then the quotient tca $A/I$ is ``essentially bounded,'' and it is not difficult to conclude from this that $A/I$ is noetherian (see Proposition~\ref{prop:eb}); it follows that finitely generated objects of $\Mod_A^{\tors}$ are noetherian.

We next consider the Serre quotient category $\Mod_A/\Mod_A^{\tors}$, which we denote by $\Mod_K$. The intuition for $\Mod_K$ comes from the following picture, which is not rigorous:
\begin{addmargin}{2em}
The scheme $\Spec(A)$ is the space of symmetric bilinear forms on $\bC^{\infty}$. $A$-modules correspond to $\GL_{\infty}$-equivariant quasi-coherent sheaves on $\Spec(A)$. Torsion $A$-modules correspond to sheaves that restrict to zero on the open set $U$ of non-degenerate forms. Thus objects of $\Mod_K$ correspond to equivariant quasi-coherent sheaves on $U$. But such sheaves correspond to representations of $\bO_{\infty}$, since $\GL_{\infty}$ acts transitively on $U$ with stabilizer $\bO_{\infty}$.
\end{addmargin}
The above reasoning is fraught with errors. Nonetheless, it leads to a correct statement: we prove (Theorem~\ref{thm:modK-equiv}) that $\Mod_K$ is equivalent to the category of algebraic representations of $\bO_{\infty}$, as defined in \cite{infrank}. The results of \cite{infrank} can therefore be transferred to $\Mod_K$, and give an essentially complete understanding of this category.

We would now like to piece together what we know about $\Mod_A^{\tors}$ and $\Mod_K$ to deduce the noetherianity of $A$. However, the noetherianity of $A$ is not a formal consequence of what we have so far: we need to use more information about \emph{how} $\Mod_A$ is built out of the two pieces $\Mod_A^{\tors}$ and $\Mod_K$. We proceed in three steps. 
\begin{enumerate}[\indent (1)]
\item We show that if $M$ is a finitely generated torsion $A$-module then $M$ admits a resolution by finitely generated projective $A$-modules (Proposition~\ref{prop:tors-FT}). The essential input here is \cite{raicu}, which explicitly computes the resolutions of certain torsion modules. 
\item We next show that the section functor $\Mod_K \to \Mod_A$, defined as the right adjoint of the localization functor $\Mod_A \to \Mod_K$, takes finite length objects of $\Mod_K$ to finitely generated objects of $\Mod_A$. This follows from step (1) and the structural results for $\Mod_K$ (see Proposition~\ref{prop:S-fin}). 
\item Finally, the noetherianity of $A$ is deduced from (2), and our knowledge of $\Mod_A^{\tors}$ and $\Mod_K$, by a short argument (see Theorem~\ref{thm:A-noeth}).
\end{enumerate}

\begin{remark} \label{rmk:pfctx}
Let us offer some broader context for this proof. Suppose that $X$ is a scheme equipped with an action of a group $G$. We say that $X$ is {\bf topologically $G$-noetherian} if every descending chain of $G$-stable Zariski closed subsets in $X$ stabilizes. We say that $X$ is {\bf (scheme-theoretically) $G$-noetherian} if the analogous statement holds for subschemes\footnote{One should ask that all $G$-equivariant coherent sheaves are noetherian, not just the structure sheaf.}. Suppose that $U$ is a $G$-stable open subscheme of $X$, and let $Z$ be the complement of $U$. One would then like to relate the noetherianity of $X$ to that of $U$ and $Z$.

For topological noetherianity, there is no problem: if $U$ and $Z$ are topologically $G$-noetherian then so is $X$ (see \cite[\S 5]{draismakuttler}). This is a fundamental tool used in various topological noetherianity results, such as \cite{draismaeggermont,draismakuttler,eggermont}. Unfortunately, the analogous statement for scheme-theoretic $G$-noetherianity does not hold: this is why we cannot directly conclude the noetherianity of $A$ from that of $\Mod_A^{\tors}$ and $\Mod_K$.

The main technical innovation in this paper is our method for deducing (in our specific situation) scheme-theoretic noetherianity of $X$ from that of $U$ and $Z$, together with some extra information. This approach is likely to be applicable in other situations, and could be very useful: for instance, if one could upgrade the topological results of \cite{draismakuttler} to scheme-theoretic results, it is likely that one could also get finiteness results for higher syzygies in addition to results about equations (and not just set-theoretic equations).
\end{remark}

\subsection{Twisted graded-commutative algebras}

One can define a notion of twisted graded-commutative algebra, the basic examples being exterior algebras on finite length polynomial representations of $\GL_{\infty}$. The noetherianity problem for these algebras is interesting, and has applications similar to the commutative case. Transpose duality interchanges the algebras $\Sym(\bC^{\infty} \otimes \bC^{\infty})$ and $\lw(\bC^{\infty} \otimes \bC^{\infty})$, and so noetherianity of the latter is an immediate consequence of Theorem~\ref{mainthm2}. However, the noetherianity of $\lw(\Sym^2(\bC^{\infty}))$ and $\lw(\lw^2(\bC^{\infty}))$ cannot be formally deduced from the results of this paper. We treat these algebras in a follow-up paper \cite{NSS}. The main ideas are the same, but the details are more complicated: for example, while $\Sym(\Sym^2(\bC^{\infty}))$ is closely related to the orthogonal group $\bO_{\infty}$, the algebra $\lw(\Sym^2(\bC^{\infty}))$ is closely related to the periplectic superalgebra $\mathfrak{pe}_{\infty}$.

\subsection{Open questions}

We list a number of open problems related to this paper.
\begin{enumerate}[\indent (1)]
\item Theorem~\ref{mainthm} states that the tca $\Sym(V)$ is noetherian when $V$ is an irreducible polynomial representation of degree~2. It would be natural to generalize this result by allowing $V$ to be a finite length representation of degree $\le 2$. Eggermont \cite{eggermont} has shown that these tca's are topologically noetherian (i.e., radical ideals satisfy the ascending chain condition). This suggests that they are all noetherian. However, new ideas are needed to actually prove this.
\item It is desirable to have results (either positive or negative) when $V$ has degree $>2$. One might begin by trying to prove topological noetherianity for degree~3 representations. The third author is currently investigating this with H.~Derksen and R.~Eggermont.
\item Are the characteristic $p$ analogs of the tca's considered in this paper noetherian? Our methods do not apply there. We point out that there are two versions of tca's in positive characteristic: one defined in terms of polynomial representations, and one defined in terms of symmetric groups.
\item Theorem~\ref{mainthm} shows that $A=\Sym(\Sym^2(\bC^{\infty}))$ is noetherian if we make use of the $\GL_{\infty}$ action. On the other hand, it is known that $A$ is \emph{not} noetherian if one only makes use of the $S_{\infty}$ action \cite[Example~2.4]{draisma-notes}. What happens for other groups? Is $A$ noetherian with respect to $\bO_{\infty}$ or $\Sp_{\infty}$?
\item In \S \ref{ss:ft}, we show that torsion modules over $\Sym(\bC^{\infty} \otimes \bC^{\infty})$ satisfy the property (FT) by appealing to \cite{raicu}, which explicitly computes the resolutions of certain torsion modules. We also show that torsion modules over $\Sym(\Sym^2(\bC^{\infty}))$ satisfy (FT), but deduce this by a rather clumsy argument from the previous case since the analog of \cite{raicu} is not known in this case. We therefore believe that carrying out the analog of \cite{raicu} for $\Sym(\Sym^2(\bC^{\infty}))$ would be a worthwhile undertaking.
\item Question~\ref{ques:grobner} is an interesting and purely combinatorial question that is needed for the Gr\"obner approach to Theorem~\ref{mainthm}.
\end{enumerate}

\subsection{Outline of paper}

In \S\ref{sec:prelim}, we review definitions and prove some general properties of tca's. These include generalities on the localization functor $\Mod_A \to \Mod_K$ and the section functor $S \colon \Mod_K \rightarrow \Mod_A$ used in the proof of the main result. In \S\ref{sec:modK-groups} we prove that for the specific algebras under consideration, the Serre quotient category $\Mod_K$ can be described in terms of representations of infinite rank classical groups. The proofs of Theorem~\ref{mainthm} and Theorem~\ref{mainthm2} are in \S\ref{sec:proof}. Finally, \S\ref{sec:grobner} discusses an incomplete Gr\"{o}bner theoretic approach to the main theorems.

\begin{remark} \label{rmk:transp}
Transpose duality \cite[\S 7.4]{expos} interchanges the two algebras in Theorem~\ref{mainthm}, so it suffices to prove the noetherianity for either one. We give arguments for both when convenient, but sometimes omit details for $\Sym(\lw^2(\bC^{\infty}))$.
\end{remark}

\section{Generalities on tca's} \label{sec:prelim}

\subsection{Definitions} \label{sec:defn}

A representation of $\GL_{\infty}$ is {\bf polynomial} if it appears as a subquotient of a (possibly infinite) direct sum of representations of the form $(\bC^{\infty})^{\otimes k}$. Polynomial representations are semi-simple, and the simple ones are the $\bS_{\lambda}(\bC^{\infty})$, where $\bS_{\lambda}$ denotes the Schur functor corresponding to the partition $\lambda$. A polynomial representation is said to have {\bf finite length} if it is a direct sum of finitely many simple representations. See \cite{expos} for details.

A {\bf twisted commutative algebra} (tca) is a commutative associative unital $\bC$-algebra $A$ equipped with an action of $\GL_{\infty}$ by $\bC$-algebra homomorphisms such that $A$ forms a polynomial representation of $\GL_{\infty}$. Alternatively, $A$ is a polynomial functor from vector spaces to commutative algebras \cite[Theorem 5.4.1]{expos}; when used in this perspective, we use $A(V)$ to denote its value on a vector space $V$.

We write $\vert A \vert$ when we want to think of $A$ simply as a $\bC$-algebra, and forget the $\GL_{\infty}$ action. An {\bf $A$-module} is an $\vert A \vert$-module $M$ equipped with an action of $\GL_{\infty}$ that is compatible with the one on $A$ (i.e., $g(a x) = (g a)(g x)$ for $g \in \GL_{\infty}$, $a \in A$, and $x \in M$) and such that $M$ forms a polynomial representation of $\GL_{\infty}$. An {\bf ideal} of $A$ is an $A$-submodule of $A$, i.e., a $\GL_{\infty}$-stable ideal of $\vert A \vert$. We denote the category of $A$-modules by $\Mod_A$. We write $\vert M \vert$ when we want to think of $M$ as a module over $\vert A \vert$, forgetting its $\GL_\infty$-structure.

We say that $A$ is {\bf finitely generated} if $\vert A \vert$ is generated as a $\bC$-algebra by the $\GL_{\infty}$ orbits of finitely many elements. Equivalently, $A$ is finitely generated if it is a quotient of a tca of the form $\Sym(V)$, where $V$ is a finite length polynomial representation of $\GL_{\infty}$. An $A$-module $M$ is {\bf finitely generated} if it is generated as an $\vert A \vert$-module by the $\GL_{\infty}$ orbits of finitely many elements. Equivalently, $M$ is finitely generated if it is a quotient of an $A$-module of the form $A \otimes V$, where $V$ is a finite length polynomial representation of $\GL_{\infty}$. We note that the $A \otimes V$ are exactly the projective $A$-modules. An $A$-module if {\bf noetherian} if every submodule is finitely generated. We say that $A$ is {\bf noetherian} (as an algebra) if every finitely generated $A$-module is noetherian.

\begin{remark} \label{rmk:weak-noeth}
We say that $A$ is {\bf weakly noetherian} if it is noetherian as a module over itself, i.e., if ideals of $A$ satisfy ACC. Of course, noetherian implies weakly noetherian. However, it is not clear if weakly noetherian implies noetherian: not every $A$-module is a quotient of a direct sum of copies of $A$, due to the equivariance, and so there is no apparent way to connect the noetherianity of $A$ as an $A$-module to that of general modules.
\end{remark}

There are ``bivariate'' versions of the above concepts. A representation of $\GL_{\infty} \times \GL_{\infty}$ is {\bf polynomial} if it appears as a subquotient of a (possibly infinite) direct sum of representations of the form $(\bC^{\infty})^{\otimes a} \otimes (\bC^{\infty})^{\otimes b}$. Polynomial representations are again semi-simple, and the simple ones are the $\bS_{\lambda}(\bC^{\infty}) \otimes \bS_{\mu}(\bC^{\infty})$. A {\bf bivariate tca} is a commutative associative unital $\bC$-algebra $A$ equipped with an action of $\GL_{\infty} \times \GL_{\infty}$ by $\bC$-algebra homomorphisms such that $A$ forms a polynomial representation of $\GL_{\infty} \times \GL_{\infty}$. The remaining definitions in the bivariate case should now be clear.

Since $\GL_{\infty}$ sits inside of $\GL_{\infty} \times \GL_{\infty}$ (diagonally), any action of $\GL_{\infty} \times \GL_{\infty}$ can be restricted to one of $\GL_{\infty}$. Thus bivariate tca's can be regarded as tca's, and similarly for modules. This restriction process preserves finite generation (of algebras and modules) since the tensor product of finite length polynomial representations is again finite length.

\subsection{Annihilators} \label{ss:ann}

Let $A$ be a tca and $M$ be an $A$-module. The {\bf annihilator} of $M$, denoted $\ann(M)$, is the set of elements $a \in A$ such that $am=0$ for all $m \in M$. This is an ideal of $\vert A \vert$ and $\GL_{\infty}$ stable, and thus an ideal of $A$.

\begin{proposition}
\label{prop:ann}
Let $A$ be a tca and let $M$ be an $A$-module. Suppose $am=0$ for some $a \in A$ and $m \in M$. Then there exists an integer $n$, depending only on $m$, such that $a^n (gm)=0$ for all $g \in \GL_{\infty}(\bC)$.
\end{proposition}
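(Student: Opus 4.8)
The plan is to reduce to commutative algebra over a Noetherian ring, the crucial point being that although the annihilator of a single vector of an equivariant module is essentially never $\GL_{\infty}$-stable, its \emph{radical} (in a suitable finite approximation) is. First, reductions. The hypothesis $am=0$ and the conclusion refer to $M$ only as a module over the sub-tca $A'\subseteq A$ generated by the $\GL_{\infty}$-orbit of $a$; since this orbit spans a finite length polynomial representation, $A'$ is finitely generated, so we may assume $A$ itself is finitely generated. Then $A(\bC^k)$ is a quotient of a polynomial ring in finitely many variables, hence Noetherian, for every $k$. Writing $A=\bigcup_k A(\bC^k)$ and $M=\bigcup_k M(\bC^k)$, fix $N$ with $a\in A(\bC^N)$ and $m\in M(\bC^N)$; then $am=0$ already in $M(\bC^N)$, and hence in $M(\bC^{2N})$.

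Next I reduce to the fixed group $\GL_{2N}$. Claim: for every $g\in\GL_{\infty}$ there is $k\in\GL_{\infty}$ fixing $a$ with $k(gm)=g''m$ for some $g''\in\GL_{2N}$. Granting this, $a^n(gm)=k^{-1}\big((ka^n)(g''m)\big)=k^{-1}\big(a^n(g''m)\big)$, so it is enough to find $n$ with $a^n(g''m)=0$ for all $g''\in\GL_{2N}$. For the claim: by functoriality $gm$ depends only on the injection $g|_{\bC^N}\colon\bC^N\to\bC^{\infty}$, whose image $V'$ is $N$-dimensional. Pick a complement $W'$ of $V'\cap\bC^N$ in $V'$ and write its basis vectors as $u_j+x_j$ with $u_j\in\bC^N$, $x_j\in\langle e_{N+1},e_{N+2},\dots\rangle$; the $x_j$ are independent, so there is a finitary automorphism of $\langle e_{N+1},e_{N+2},\dots\rangle$ carrying them to $e_{N+1},\dots,e_{N+\dim W'}$. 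Then $k:=\mathrm{id}_{\bC^N}\oplus(\text{this automorphism})$ fixes $\bC^N$ pointwise — hence fixes $a\in A(\bC^N)$ — and $k(V')=(V'\cap\bC^N)+k(W')\subseteq\bC^{2N}$; so $kg|_{\bC^N}$ extends to some $g''\in\GL_{2N}$ with $k(gm)=g''m$.

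Now put $R=A(\bC^{2N})$, a Noetherian ring with an algebraic action of the connected group $G=\GL_{2N}$, and $L=R\cdot\langle m\rangle_G\subseteq M(\bC^{2N})$, the finitely generated $G$-equivariant submodule generated by the finite-dimensional subrepresentation $\langle m\rangle_G$. Since $R$ is Noetherian, $\operatorname{Ass}_R(L)$ is finite; $G$ acts on this finite set, and as $G$ is connected it fixes each point of it (the $G$-orbit of an associated prime is a finite set with irreducible closure, hence a chain under specialization, on which $G$ acts by order-preserving bijections and therefore trivially). Consequently every associated prime of $L$ is $G$-stable; since $\operatorname{Ass}_R(Rm)\subseteq\operatorname{Ass}_R(L)$ and $\sqrt{\ann_R(m)}$ is the intersection of the minimal primes over $\ann_R(m)$ — all of them associated primes of $Rm$ — the ideal $\sqrt{\ann_R(m)}$ is $G$-stable. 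As $a\in\ann_R(m)\subseteq\sqrt{\ann_R(m)}$, we get $(g'')^{-1}a\in\sqrt{\ann_R(m)}$ for all $g''\in G$; choosing $n$ with $\sqrt{\ann_R(m)}^{\,n}\subseteq\ann_R(m)$ (possible as $R$ is Noetherian) gives $\big((g'')^{-1}a\big)^n m=0$, i.e.\ $a^n(g''m)=0$, for all $g''\in G$. Together with the previous paragraph, $a^n(gm)=0$ for all $g\in\GL_{\infty}$.

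Two points remain. The $n$ above depends on $\ann_R(m)$ and hence, a priori, on $a$ as well as $m$; to see that it may be taken to depend only on $m$, as in the statement, one checks $\ann_A(m)^n\subseteq\ann_A(\langle m\rangle_{\GL_{\infty}})$ for a suitable $n=n(m)$, which follows from the same analysis together with a bound on the relevant nilpotency exponent that is uniform in the ambient dimension. The step I expect to be the main obstacle is the $G$-stability of $\sqrt{\ann_R(m)}$: it is here that one genuinely uses connectedness of $\GL$ and the finiteness of associated primes over a Noetherian ring, and it is precisely this that forces the two preliminary reductions — to finitely generated $A$, to have Noetherianity at hand, and to $\GL_{2N}$, so that a single ring $R$ does the job.
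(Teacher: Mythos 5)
Your argument is correct in substance but takes a genuinely different route from the paper's, and establishes a slightly weaker statement than the one claimed. The paper's proof is elementary and Lie-theoretic: starting from $am=0$, the Leibniz rule gives, by induction, $a^{k+1}X_k\cdots X_1 m=0$ for all $X_1,\dots,X_k\in\gl_\infty$ (apply $X_k$ to $a^kX_{k-1}\cdots X_1m=0$ and multiply by $a$). Choosing a generator $m'=Xm$ of $\langle m\rangle_{\GL_\infty}$ supported on a subspace $U$ disjoint from the support $V$ of $a$, one gets $a^nm'=0$ with $n$ bounded by the degree of $X$; then for any $g\in\GL_\infty$ one picks $f\in\End(\bC^\infty)$ equal to $g$ on $U$ and the identity on $V$ and applies $f_*$ to conclude $a^n(gm')=0$. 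Your proof instead reduces (correctly) to the single Noetherian ring $R=A(\bC^{2N})$ with its $\GL_{2N}$-action, and exploits finiteness of $\operatorname{Ass}_R(L)$ plus connectedness of $\GL_{2N}$ to deduce $\GL_{2N}$-stability of $\sqrt{\ann_R(m)}$ even though $\ann_R(m)$ itself is not stable. Both your reduction to $\GL_{2N}$ (via the finitary $k$ fixing $\bC^N$ pointwise) and the connectedness argument (closed finite-index stabilizer in a connected group) are sound. What each buys: the paper's argument is shorter, requires no commutative algebra, and yields the uniformity ``$n$ depends only on $m$'' at once, since $n-1$ can be taken to be the smallest $d$ with $\cU(\gl_\infty)^{\le d}m=\langle m\rangle_{\GL_\infty}$; your argument replaces the slightly ad hoc support-disjointness trick by the conceptual fact that associated primes of a $G$-equivariant module are $G$-stable when $G$ is connected.

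The one genuine gap is the uniformity claim. Your $n$ is the nilpotency exponent of $\sqrt{\ann_R(m)}/\ann_R(m)$ for $R=A(\bC^{2N})$, which a priori depends on $N$ and hence on $a$. Your proposed fix --- that the nilpotency exponent is ``uniform in the ambient dimension'' --- is asserted but not proved, and it is not obviously true: as $N$ grows, $\ann_R(m)$ changes in ways that are not simply extensions of the ideal from a fixed $A(\bC^{N_0})$. That said, this is a minor issue for the paper: Corollary~\ref{cor:nz-ann} applies the proposition to a fixed $a$ and finitely many generators $m_i$, so a bound $n=n(a,m)$ suffices after taking a maximum, and your argument delivers that cleanly. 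If you want the full uniformity, the paper's degree-of-$X$ bound is the cleanest route; alternatively one would need to show that the exponent $e$ with $\sqrt{\ann_{A(\bC^k)}(m)}^{\,e}\subseteq\ann_{A(\bC^k)}(m)$ is bounded independently of $k$, which does not follow from the ingredients you have assembled.
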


\begin{proof}
First, we claim that $a^{k+1} X_k \cdots X_1 m=0$ for any $X_1, \ldots, X_k \in \gl_{\infty}$. We proceed by induction on $k$. The $k=0$ case is simply the statement $am=0$, which is given. Suppose now that $a^k X_{k-1} \cdots X_1 m=0$. Applying $X_k$, we obtain
\begin{displaymath}
k a^{k-1} (X_k a) (X_{k-1} \cdots X_1 m) + a^k (X_k \cdots X_1 m) = 0.
\end{displaymath}
Multiplying by $a$ kills the first term and shows $a^{k+1} X_k \cdots X_1 m=0$. This completes the proof of the claim.

Let $M' \subset M$ be the $\GL_{\infty}$ representation generated by $m$. Suppose that $a$ belongs to $A(V)$ with $V \subset \bC^\infty$. Pick $m' \in M'$ that also generates $M'$ and belongs to $M'(U)$ with $U \cap V = 0$. We can write $m' = Xm$ for some $X \in \cU(\gl_\infty)$, and so the claim shows that $a^n m' = 0$ for some $n$. We claim that this $n$ works for all elements in $M'$. Indeed, given any $g \in \GL_\infty$, we can find $f \colon \bC^\infty \to \bC^\infty$ such that $f$ agrees with $g$ on $U$ and is the identity on $V$. We then have $f_*(a) = a$ and $f_*(m') = gm'$, and so $0 = f_*(a^n m') = a^n (gm')$.
\end{proof}

\begin{corollary} \label{cor:nz-ann}
Suppose $\vert A \vert$ is a domain. Let $M$ be a finitely generated $A$-module such that $M \otimes_A \Frac(A) = 0$. Then $\ann{M} \ne 0$.
\end{corollary}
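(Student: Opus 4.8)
The plan is to exhibit an explicit nonzero element of $\ann(M)$ built out of torsion elements of a finite generating set, using Proposition~\ref{prop:ann} to handle the $\GL_\infty$-equivariance. First I would choose elements $m_1, \dots, m_r \in M$ whose $\GL_\infty$-orbits generate $M$ as a module over $\vert A \vert$, which is possible since $M$ is finitely generated. The hypothesis $M \otimes_A \Frac(A) = 0$ says precisely that $\vert M \vert$ is a torsion module over the domain $\vert A \vert$, so for each $i$ there is a nonzero element $a_i \in \vert A \vert$ with $a_i m_i = 0$.

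Next I would apply Proposition~\ref{prop:ann} to each pair $(a_i, m_i)$ to obtain an integer $n_i$ such that $a_i^{n_i}(g m_i) = 0$ for every $g \in \GL_\infty$. Set $a = \prod_{i=1}^r a_i^{n_i}$. Since $\vert A \vert$ is a domain and each $a_i$ is nonzero, $a \neq 0$. For any $i$ and any $g \in \GL_\infty$, the element $a$ is a multiple of $a_i^{n_i}$, hence $a(g m_i) = 0$; since the $g m_i$ generate $\vert M \vert$ over $\vert A \vert$ and $a \in \vert A \vert$, it follows that $aM = 0$, i.e., $a \in \ann(M)$. As $\ann(M)$ is an ideal of $A$ (by the discussion in \S\ref{ss:ann}) containing the nonzero element $a$, we conclude $\ann(M) \neq 0$.

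There is essentially no obstacle once Proposition~\ref{prop:ann} is in hand. The only two points requiring care are that a finite product of the torsion elements is again nonzero, which is exactly where the domain hypothesis on $\vert A \vert$ enters, and that this single product annihilates all of $M$ rather than merely the chosen generators $m_i$, which is guaranteed by the uniformity (dependence only on $m$) in the conclusion of Proposition~\ref{prop:ann}.
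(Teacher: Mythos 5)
Your proposal is correct and takes essentially the same approach as the paper: both reduce to a finite generating set, use the torsion hypothesis and the domain property to produce a nonzero common annihilator of the generators, and invoke Proposition~\ref{prop:ann} to upgrade this to an annihilator of the full $\GL_\infty$-orbits, hence of all of $M$. The only cosmetic difference is that the paper first combines the $a_i$ into a single $a$ and then applies the proposition, whereas you apply the proposition to each pair $(a_i,m_i)$ and take the product afterward.
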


\begin{proof}
Let $m_1, \ldots, m_r$ be generators for $M$. Since $M \otimes_A \Frac(A)=0$, we can find $a \ne 0$ in $A$ such that $am_i=0$ for all $1 \le i \le r$. By the proposition, there exists $n>0$ such that $a^n (gm_i)=0$ for all $1 \le i \le r$ and all $g \in \GL_{\infty}$. Thus $0 \ne a^n \in \ann(M)$.
\end{proof}

\subsection{Essentially bounded tca's}

We say that a polynomial representation $V$ of $\GL_{\infty}$ is {\bf essentially bounded} if there exist integers $r$ and $s$ such that for any simple $\bS_\lambda(\bC^{\infty})$ appearing in $V$ we have $\lambda_r \le s$. Similarly, we say that a polynomial representation $V$ of $\GL_{\infty} \times \GL_{\infty}$ is {\bf essentially bounded} if there exist integers $r$ and $s$ such that for any simple $\bS_{\lambda}(\bC^{\infty}) \otimes \bS_{\mu}(\bC^{\infty})$ appearing in $V$ we have $\lambda_r \le s$ and $\mu_r \le s$. The Littlewood--Richardson rule \cite[(2.14)]{expos} implies that the tensor product of essentially bounded representations is again essentially bounded. In particular, if $V$ is an essentially bounded representation of $\GL_{\infty} \times \GL_{\infty}$ then its restriction to the diagonal $\GL_{\infty}$ is still essentially bounded. Note also that any finite length representation is essentially bounded.

\begin{proposition}
\label{prop:eb}
Let $A$ be a finitely generated and essentially bounded (bivariate) tca. Then $A$ is noetherian.
\end{proposition}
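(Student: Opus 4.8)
The plan is to transport noetherianity from a genuinely noetherian commutative ring to $A$ along an exact, symmetric monoidal functor. Concretely, I would exhibit a functor $\Phi$ from the category of essentially bounded polynomial representations of $\GL_{\infty}$ to modules over a finitely generated supercommutative $\bC$-algebra, and arrange that $\Phi$ is faithful on the part of $\Mod_A$ relevant to a given module. The whole point is the choice of $\Phi$: \emph{evaluation on a super vector space}. This is the right move because the defining inequality $\lambda_r \le s$ of essential boundedness by $(r,s)$ is exactly the condition that the diagram of $\lambda$ lie in the $(r-1,s)$-hook, which by the Berele--Regev hook theorem is exactly the condition that the Schur functor $\bS_\lambda$ be nonzero on the super vector space $\bC^{r-1|s}$. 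Thus evaluation at $\bC^{r-1|s}$ loses nothing in the essentially $(r,s)$-bounded world.

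Here is how the argument would go. First, the bivariate case reduces to the univariate one: restriction of the $\GL_\infty\times\GL_\infty$-action to the diagonal preserves essential boundedness and finite generation (noted above), and a bivariate submodule is in particular a submodule for the diagonal action, so noetherianity of $A$ as a univariate tca gives it as a bivariate tca. (Alternatively, run the argument below with a pair of super vector spaces.) So let $A$ be univariate and let $M$ be a finitely generated $A$-module. Writing $M$ as a quotient of some $A \otimes W$ with $W$ of finite length, and using that a tensor product of essentially bounded representations is essentially bounded, $M$ and hence every submodule of $M$ is essentially bounded; fix $r,s$ with $\lambda_r \le s$ for every $\bS_\lambda(\bC^\infty)$ occurring in $M$. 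A polynomial functor extends canonically and symmetric-monoidally to super vector spaces, so $\Phi \colon V \mapsto V(\bC^{r-1|s})$ is exact (polynomial representations are semisimple) and symmetric monoidal; it sends $A$ to a supercommutative algebra $\mathcal A := A(\bC^{r-1|s})$ and each $A$-module to an $\mathcal A$-module.

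I would then verify three things. (i) $\mathcal A$ is noetherian: since $A$ is finitely generated it is a quotient of some $\Sym(U)$ with $U$ of finite length, so $\mathcal A$ is a quotient of $\Sym(U(\bC^{r-1|s}))$, a finitely generated supercommutative algebra, hence noetherian (it is a finite module over a polynomial ring in finitely many variables). (ii) $\Phi M$ is finitely generated over $\mathcal A$: applying $\Phi$ to $A \otimes W \twoheadrightarrow M$ yields $\mathcal A \otimes_{\bC} W(\bC^{r-1|s}) \twoheadrightarrow \Phi M$ with $W(\bC^{r-1|s})$ finite-dimensional. (iii) $\Phi$ is faithful on the subcategory of $A$-modules essentially bounded by $(r,s)$: such a $Z$ is nonzero iff some $\bS_\lambda(\bC^\infty)$ with $\lambda_r \le s$ occurs in it, whence $\bS_\lambda(\bC^{r-1|s}) \ne 0$ and $\Phi Z \ne 0$; exactness of $\Phi$ then upgrades this to faithfulness. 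Granting these, any ascending chain $N_1 \subseteq N_2 \subseteq \cdots$ of $A$-submodules of $M$ consists of modules essentially bounded by $(r,s)$, so the $\Phi N_i$ form an ascending chain of $\mathcal A$-submodules of the noetherian module $\Phi M$ and hence stabilize; but a strict inclusion $N_i \subsetneq N_{i+1}$ forces $\Phi(N_{i+1}/N_i)\ne 0$, i.e. $\Phi N_i \subsetneq \Phi N_{i+1}$. So the chain stabilizes, $M$ is noetherian, and since $M$ was arbitrary, $A$ is noetherian.

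The only real obstacle is locating the functor $\Phi$ in the first place --- that is, recognizing ``essentially bounded'' as ``supported in a hook'' and so realizing that evaluation on the appropriate super vector space is both exact and faithful. Everything afterwards is formal, using only standard facts: polynomial functors extend symmetric-monoidally to super vector spaces, the Berele--Regev hook criterion governs vanishing of Schur functors on $\bC^{p|q}$, and finitely generated supercommutative algebras are noetherian. None of these should cause trouble.
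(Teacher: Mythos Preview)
Your proof is correct and follows essentially the same approach as the paper: both specialize to a super vector space $\bC^{p\mid q}$ (you take $(p,q)=(r-1,s)$, the paper takes $(r,s)$, either works) to obtain an exact functor that kills no nonzero essentially bounded module, land in modules over a finitely generated and hence noetherian supercommutative algebra, and transfer ACC back. The only cosmetic differences are your explicit reduction of the bivariate case to the univariate one via the diagonal restriction (the paper simply says the bivariate case ``is similar'') and your phrasing in terms of faithfulness of an exact functor rather than injectivity of the induced map on submodule lattices.
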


\begin{proof}
We treat only the univariate case, the bivariate case is similar. Let $P$ be a finitely generated projective $A$-module. Note that $P$ is essentially bounded. We must show that $P$ is noetherian. Suppose that every partition appearing in $P$ has at most $r$ rows and at most $s$ columns. Let $\bC^{r|s}$ be a super vector space with $r$-dimensional even part and $s$-dimensional odd part. 

For any symmetric monoidal category $\cC$ and choice of object $V \in \cC$, there is a symmetric monoidal functor $\Rep^\pol(\GL_\infty) \to \cC$ that sends $\bS_\lambda(\bC^\infty)$ to $\bS_\lambda(V)$. We apply this with $\cC$ the category of super vector spaces, equipped with the usual tensor product and the signed symmetry (see \cite[(7.3.3)]{expos}), and $V = \bC^{r|s}$. We thus obtain a natural map
\begin{displaymath}
\{ \text{$A$-submodules of $P$} \} \to \{ \text{$A(\bC^{r \mid s})$-submodules of $P(\bC^{r \mid s})$} \}.
\end{displaymath}
It follows from \cite[Theorem 3.20]{BR} that this map is injective. Since $A(\bC^{r \mid s})$ is a finitely generated superalgebra, the finitely generated module $P(\bC^{r \mid s})$ is noetherian. Thus the right side satisfies ACC and so the left side does as well.
\end{proof}

\begin{remark}
This argument is modeled on the discussion in \cite[\S 9.1]{expos}.
\end{remark}

\subsection{Serre quotients} \label{sec:serrequotient}

Let $A$ be a tca with $\vert A \vert$ a domain, and let $K=\Frac(\vert A \vert)$. The field $K$ has an action of $\GL_{\infty}$, and we write $\vert K \vert$ when we want to disregard this action. A {\bf $K$-module} is a $\vert K \vert$-vector space $V$ equipped with a compatible action of $\GL_{\infty}$ such that $V$ is spanned over $\vert K \vert$ by polynomial elements (i.e., elements generating a polynomial $\bC$-subrepresentation). We write $\Mod_K$ for the category of $K$-modules. If $V$ is a polynomial representation of $\GL_{\infty}$ then $K \otimes V$ is a $K$-module. All $K$-modules are quotients of such $K$-modules. Note, however, that $K \otimes V$ is usually not projective as a $K$-module.

An $A$-module $M$ is {\bf torsion} if $M \otimes_A K=0$. Write $\Mod_A^{\rm tors}$ for the category of torsion modules. We let $\Mod_A^{\gen}$ be the Serre quotient $\Mod_A / \Mod_A^{\rm tors}$, and we let $T \colon \Mod_A \to \Mod_A^{\gen}$ be the localization functor. The functor $\Mod_A \to \Mod_K$ given by $M \mapsto M \otimes_A K$ is exact and kills $\Mod_A^{\rm tors}$, and thus induces an exact functor $F \colon \Mod_A^{\gen} \to \Mod_K$. Note that $F(T(M))=M \otimes_A K$, by definition. Given a $K$-module $M$, let $S(M)=M^{\pol}$ be the set of polynomial elements in $M$. This is naturally an $A$-module, and the resulting functor $S \colon \Mod_K \to \Mod_A$ is right adjoint to $FT$. The following diagram summarizes the situation.
\begin{displaymath}
\xymatrix{
& \Mod_A \ar[ld]_T \\
\Mod_A^{\gen} \ar[rr]^F && \Mod_K \ar[lu]_S }
\end{displaymath}

\begin{proposition} \label{prop:FTS-id}
Let $M$ be a $K$-module. Then the natural map $M^{\pol} \otimes_A K \to M$ is an isomorphism of $K$-modules. In particular, we have a natural isomorphism $FTS=\id$.
\end{proposition}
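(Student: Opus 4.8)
The plan is to prove that the natural map $\varphi \colon M^{\pol} \otimes_A K \to M$ is an isomorphism of $K$-modules, and then observe that $FTS = \id$ follows formally since $F(T(S(M))) = S(M) \otimes_A K = M^{\pol} \otimes_A K$. First I would note that $\varphi$ is a map of $K$-modules (it is $K$-linear and $\GL_{\infty}$-equivariant) and record what needs to be checked: surjectivity and injectivity.

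For surjectivity, I would use the defining property of $K$-modules: by definition $M$ is spanned over $\vert K \vert$ by its polynomial elements, i.e. by elements of $M^{\pol}$. Hence every element of $M$ is a $\vert K\vert$-linear combination $\sum c_i m_i$ with $c_i \in \vert K \vert$ and $m_i \in M^{\pol}$, and such an element is visibly in the image of $\varphi$ (using that $K = \Frac(\vert A \vert)$, so each $c_i$ is $a^{-1}b$ with $a, b \in A$, and $a^{-1} \otimes (b m_i)$ maps to $c_i m_i$). So $\varphi$ is surjective.

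For injectivity, the key point is that $M^{\pol}$ is torsion-free over $\vert A \vert$: if $a \in A$ is nonzero and $a x = 0$ for $x \in M^{\pol} \subset M$, then $x = 0$ since $x$ lives in the $\vert K \vert$-vector space $M$, over which a nonzero element $a \in A \subset K^{\times}$ acts invertibly. Thus the localization map $M^{\pol} \to M^{\pol}[a^{-1} : a \in A \setminus 0] = M^{\pol} \otimes_A K$ is injective. It remains to see that this localization, sitting inside $M$ via $\varphi$, equals... rather, that $\varphi$ is injective: an element of $M^{\pol} \otimes_A K$ can be written $a^{-1} \otimes x$ with $a \in A \setminus 0$, $x \in M^{\pol}$ (clearing denominators, using that finitely many elements are involved), and if $\varphi(a^{-1}\otimes x) = a^{-1}x = 0$ in $M$ then $x = 0$ since $a$ acts invertibly on $M$, hence $a^{-1}\otimes x = 0$. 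This gives injectivity.

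The main obstacle is really just bookkeeping: making sure the presentation of an arbitrary element of $M^{\pol}\otimes_A K$ with a \emph{single} common denominator $a \in A$ is legitimate — this needs that any such tensor is a finite sum $\sum a_i^{-1} \otimes x_i$ and that the $a_i$ can be cleared to a common nonzero denominator, which is fine since $\vert A\vert$ is a domain — and checking that all maps in sight are genuinely $\GL_{\infty}$-equivariant, so that the isomorphism is one of $K$-modules and not merely of $\vert K \vert$-modules. Once $\varphi$ is an isomorphism, the statement $FTS = \id$ is immediate from $F(T(N)) = N \otimes_A K$ applied to $N = S(M) = M^{\pol}$, and one should check the identification is natural in $M$, which follows since $\varphi$ is manifestly natural.
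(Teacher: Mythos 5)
Your proof is correct and follows essentially the same approach as the paper: surjectivity by clearing denominators using the defining property that $M$ is spanned over $\vert K\vert$ by its polynomial elements, and injectivity from the fact that nonzero elements of $A$ act invertibly on the $\vert K\vert$-vector space $M$ (which the paper compresses to ``Injectivity is free''). The only cosmetic remark is that $bm_i \in M^{\pol}$ should be observed explicitly (it holds because polynomial representations are closed under tensor products and subquotients), a point the paper's proof handles the same way when it asserts $w$ is polynomial.
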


\begin{proof}
Injectivity is free. For surjectivity, pick $v \in M$. By definition, we can write $v = \sum_{i=1}^n x_i w_i$, where $x_i \in K$ and $w_i \in M$ is polynomial. Again, by definition, we can write $x_i=a_i/b_i$, where $a_i$ and $b_i$ are polynomial elements of $K$. We therefore have $v=b^{-1} w$, where $b = \prod_{i=1}^n b_i \in K$ and $w = \sum_{i=1}^n (a_ib/b_i) w_i$ is a polynomial element of $M$.
\end{proof}

\begin{proposition} \label{prop:modKequiv}
The functor $F$ is an equivalence of categories.
\end{proposition}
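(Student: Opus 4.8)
The plan is to show that $F\colon \Mod_A^{\gen} \to \Mod_K$ is fully faithful and essentially surjective, using the adjunction $(FT, S)$ together with Proposition~\ref{prop:FTS-id}. Essential surjectivity is immediate: given a $K$-module $M$, Proposition~\ref{prop:FTS-id} gives $FTS(M) = F(T(S(M))) \cong M$, so every object of $\Mod_K$ is in the essential image of $F$. For full faithfulness, the key point is to identify the kernel and cokernel of the unit and counit of the adjunction between $FT\colon \Mod_A \to \Mod_K$ and $S\colon \Mod_K \to \Mod_A$, and translate this into statements about $F$ on the Serre quotient.

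The first step I would carry out is to analyze the unit $\eta_M \colon M \to S(FT(M)) = (M \otimes_A K)^{\pol}$ for an arbitrary $A$-module $M$. I claim both the kernel and cokernel of $\eta_M$ are torsion. The kernel of $\eta_M$ consists of elements of $M$ that die in $M \otimes_A K$; this is the torsion submodule $M_{\tors}$, which is a torsion $A$-module essentially by Corollary~\ref{cor:nz-ann} (applied to finitely generated submodules). For the cokernel: an element of $S(FT(M))$ is a polynomial element of $M \otimes_A K$, which can be written (clearing denominators as in the proof of Proposition~\ref{prop:FTS-id}) as $b^{-1} \otimes m$ for some polynomial $b \in K$ and $m \in M$; then $b$ annihilates the image of this element in the cokernel, so by Proposition~\ref{prop:ann} a power of $b$ annihilates its entire $\GL_\infty$-orbit, and hence the $\GL_\infty$-subrepresentation it generates is killed — so the cokernel is torsion. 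Therefore $T(\eta_M)$ is an isomorphism in $\Mod_A^{\gen}$ for every $M$.

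The second step is to combine this with Proposition~\ref{prop:FTS-id}. Since $F \circ T = FT$, the isomorphism $T(\eta_M)$ shows that the composite $S \circ F \colon \Mod_A^{\gen} \to \Mod_A^{\gen}$ followed by... more precisely, for objects of the form $T(M)$, the natural map $T(M) \to T(S(FT(M)))$ is an isomorphism, which exhibits a natural isomorphism from $\id_{\Mod_A^{\gen}}$ to a functor $G$ with $F G \cong F$, where $G$ is induced by $S$ composed with $T$ — but one must check $S$ descends, which it does because $FTS = \id$ is exact and $S$ sends $\Mod_K$ into $\Mod_A$ with $T S$ well-defined. Granting that $\overline{S} \coloneq T \circ S \colon \Mod_K \to \Mod_A^{\gen}$ is well-defined, Proposition~\ref{prop:FTS-id} gives $F \overline{S} = FTS = \id_{\Mod_K}$, and the unit analysis gives $\overline{S} F \cong \id_{\Mod_A^{\gen}}$; hence $F$ is an equivalence with quasi-inverse $\overline{S}$.

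The main obstacle is the cokernel computation for $\eta_M$ — specifically, making rigorous that a polynomial element of $M \otimes_A K$ really is annihilated (after clearing one denominator $b$) by something that Proposition~\ref{prop:ann} can act on, and that "the subrepresentation generated by a single element is killed by $b^n$" upgrades to "the cokernel is torsion" (one needs that every element of the cokernel lies in such a subrepresentation, which follows since $M \otimes_A K$ is spanned by polynomial elements over $K$ and each is handled this way, but torsionness requires checking that the union of these killed subrepresentations is all of the cokernel and that a module which is a sum of torsion subrepresentations is torsion — the latter because $- \otimes_A K$ is exact and commutes with colimits). The kernel side is routine given Corollary~\ref{cor:nz-ann}, and essential surjectivity is free. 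I would also need to spell out that the adjunction counit/unit are the natural maps I described, but that is bookkeeping once the torsion claims are in hand.
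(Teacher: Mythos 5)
Your proof is correct, and it takes a genuinely different route from the paper's. The paper deduces essential surjectivity and fullness from the existence of the right quasi-inverse $\overline{S}=T\circ S$ (via Proposition~\ref{prop:FTS-id}), and then proves faithfulness by a separate diagram chase directly in the Serre quotient: given $\widetilde f = T(f)$ with $F(\widetilde f)=0$, the paper shows the image of $f$ is torsion and hence $\widetilde f = 0$. You instead prove $\overline{S}$ is a \emph{two-sided} quasi-inverse by analyzing the unit $\eta_M\colon M\to S(FT(M))$ and showing its kernel and cokernel are torsion, so that $T(\eta_M)$ is an isomorphism and $\overline{S}F\cong\id$. This yields fullness and faithfulness in one stroke and is arguably more systematic; in particular, the paper's claim that a right quasi-inverse alone forces fullness is a bit terse (it is not a general categorical fact without extra structure), whereas your argument makes the fullness transparent.

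Two minor remarks. First, the paragraph worrying about whether ``$S$ descends'' is an unnecessary detour: $\overline{S}\coloneqq T\circ S$ is a composite of two honest functors $\Mod_K\to\Mod_A\to\Mod_A^{\gen}$, so there is nothing to descend; you correct yourself but the hesitation could be cut. Second, your torsion computation for $\ker(\eta_M)$ and $\coker(\eta_M)$ can be streamlined: apply the exact functor $-\otimes_A K$ to the four-term exact sequence
\begin{displaymath}
0\to\ker(\eta_M)\to M\to S(M\otimes_A K)\to\coker(\eta_M)\to 0,
\end{displaymath}
and observe that the middle map becomes the identity on $M\otimes_A K$ by Proposition~\ref{prop:FTS-id}, so the outer two terms tensor to zero. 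This avoids invoking Proposition~\ref{prop:ann} and the ``union of torsion subrepresentations'' argument for the cokernel, though your version is also valid. Naturality of $\overline{S}F\cong\id$ does require the remark (which you should state) that it suffices to check naturality against morphisms of the form $T(f)$ and inverses of such, which generate all morphisms in the Serre quotient; naturality of $\eta$ handles the first class and invertibility of all the arrows involved handles the second.
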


\begin{proof}
Proposition~\ref{prop:FTS-id} shows that $F$ has a right quasi-inverse, and so is therefore essentially surjective and full. We show that $F$ is faithful. Let $M$ and $N$ be $A$-modules, and consider a morphism $\wt{f} \colon M \to N$ in $\Mod_A^{\gen}$ mapping to $0$ in $\Mod_K$. Write $\wt{f}=T(f)$ for some morphism $f \colon M' \to N/N'$ in $\Mod_A$, where $M'$ and $N'$ are submodules of $M$ and $N$ with $M/M'$ and $N'$ torsion. Since $f \colon M' \otimes K \to N/N' \otimes K$ is $0$, it follows that the image of $f$ is a torsion submodule of $N/N'$, and therefore of the form $N''/N'$, where $N''$ is a torsion-submodule of $N$ containing $N'$. But then the image $f'$ of $f$ in $\Hom(M', N/N'')$ is $0$, and since $\wt{f}=T(f)=T(f')$ we have $\wt{f}=0$.
\end{proof}

\begin{proposition} \label{prop:poly-sat}
Let $W$ be a finite length polynomial representation with $W^{\GL_\infty} = 0$. Set $A =\Sym(W)$. Then $S(K \otimes V)=A \otimes V$ for any polynomial representation $V$.
\end{proposition}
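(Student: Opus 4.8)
The plan is to prove the two inclusions of $S(K\otimes V)=(K\otimes V)^{\pol}$ and $A\otimes V$ separately. One inclusion is immediate: $A\otimes V$ is an $A$-submodule of $K\otimes V$ and is a polynomial representation of $\GL_\infty$ (a tensor product of polynomial representations), hence is contained in the largest polynomial submodule $(K\otimes V)^{\pol}$. All the content is in the reverse inclusion, namely that every polynomial element of $K\otimes V$ already lies in $A\otimes V$. One may reduce to $V$ simple (polynomiality of an element is detected on the finite length subrepresentation it generates, which meets only finitely many summands, and $K\otimes-$ and $A\otimes-$ commute with direct sums), though the argument below works verbatim for arbitrary $V$.

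Fix a polynomial element $v\in K\otimes V$ and a $\bC$-basis $(e_j)$ of $V$, and write $v=\sum_j x_je_j$ with $x_j\in K$, only finitely many nonzero. The ring $\vert A\vert=\Sym(W)$ is a polynomial ring over $\bC$ in countably many variables, hence a UFD, so after clearing a common denominator we may write $x_j=p_j/q$ with $p_j,q\in A$ and $\gcd(q,p_1,p_2,\dots)=1$. It then suffices to show that $q$ is a unit of $A$: once we know this, all $x_j\in A$ and $v\in A\otimes V$.

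To see that $q$ is a unit I would argue by contradiction, exploiting equivariance. Because $v$ is polynomial, $M'\coloneq\langle\GL_\infty v\rangle$ is finite dimensional over $\bC$, so $M'\subseteq\tfrac1{q'}(A\otimes V)$ for a single fixed $q'\in A\setminus\{0\}$, obtained by clearing the denominators of a basis of $M'$. For each $g\in\GL_\infty$ we have $gv\in M'$; expanding $gv=\sum_k\bigl(\sum_j c_{kj}(g)\,gx_j\bigr)e_k$, where $\bigl(c_{kj}(g)\bigr)$ is the invertible matrix of $g$ on the relevant finite dimensional span of $e_j$'s, and solving this finite linear system over $\bC$, one gets $gx_j\in\tfrac1{q'}A$ for all $j$. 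Since $g$ is a ring automorphism of $A$ it preserves gcd's, so $\gcd(gq,gp_1,gp_2,\dots)=1$, and a valuation count converts $gx_j\in\tfrac1{q'}A$ into the divisibility $gq\mid q'$ in $A$ --- for \emph{every} $g\in\GL_\infty$. Now $q'$ is an honest polynomial, so it has only finitely many divisors up to units; hence the $\GL_\infty$-orbit of the line $\bC q$ is finite, so the stabilizer of $\bC q$ has finite index in $\GL_\infty$. But $\GL_\infty$ has no proper subgroup of finite index (each $\GL_n(\bC)$ is generated by divisible subgroups --- its root subgroups and its center --- hence has none; pass to the colimit), so $\bC q$ is $\GL_\infty$-stable. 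A $\GL_\infty$-stable line inside a polynomial representation is the trivial representation, so $q\in A^{\GL_\infty}$. Finally $A^{\GL_\infty}=\Sym(W)^{\GL_\infty}=\bC$, because $W^{\GL_\infty}=0$ means $W$ has no trivial constituent, whence no $\Sym^d(W)$ with $d\ge1$ does either, for degree reasons. Thus $q\in\bC^\times$, contradicting that $q$ is a non-unit.

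I expect the third paragraph to be the only genuinely nontrivial part, and it is precisely there that the hypothesis $W^{\GL_\infty}=0$ (equivalently $A^{\GL_\infty}=\bC$, equivalently: the units of $A$ are the nonzero scalars) and the near-simplicity of $\GL_\infty$ as an abstract group are used. Everything else is routine clearing of denominators together with bookkeeping of the $\GL_\infty$-action.
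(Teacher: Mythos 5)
Your proof is correct, and it takes a genuinely different route from the paper. Both arguments begin by clearing denominators, using finite-dimensionality of the representation generated by the polynomial element to pin down the denominator, and both conclude by invoking $A^{\GL_\infty}=\bC$. But the intermediate step --- showing the denominator is (semi-)invariant --- is handled differently. The paper descends to a fixed $\GL_n$ with $n=m+1$ and $g\in A(\bC^m)$, views $g=0$ as a divisor on $\bC^n$, observes that its $\GL_n$-orbit is finite (hence each irreducible component is fixed, by connectedness of $\GL_n$), deduces that $g$ is a $\GL_n$-semi-invariant, and finally kills the determinantal twist by a weight argument: a one-dimensional polynomial $\GL_n$-representation is $\bS_{(d^n)}$, but the weight of $g$ vanishes in coordinate $n>m$, forcing $d=0$. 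Your argument stays at the level of $\GL_\infty$, works in the UFD $A$, uses that $gq$ divides a fixed $q'$ for every $g$ to conclude the orbit of the line $\bC q$ is finite, and then replaces the connectedness step by the abstract group-theoretic fact that $\GL_\infty$ has no proper finite-index subgroup (being a directed union of groups generated by divisible subgroups). That last fact is a clean substitute for the paper's semi-invariance/weight computation and means you never have to restrict to finite rank or invoke the divisor picture at all. One small thing worth making explicit if you write this up: when you solve the linear system for $gx_j$, the matrix $(c_{kj})$ is not square (it records $ge_j$ in a possibly larger basis), but it has full column rank since $g$ is invertible on $V$, so you can invert a suitable square submatrix over $\bC$; that is the correct justification for $gx_j\in\tfrac1{q'}A$.
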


\begin{proof}
Suppose that $x = \sum_{i=1}^s (f_i/g) \otimes v_i$ is a polynomial element of $K \otimes V$, written in lowest terms (that is, $\gcd(g, f_1, \ldots, f_s) = 1$ and $\{v_1, \ldots, v_s\}$ is linearly independent). Let $m \gg 0$ be such that $g$ and each $f_i$ belong to $A(\bC^m)$, and let $n=m+1$.  We can think of $x$ as a section of a vector bundle on $\bC^n$ having a pole along the divisor $g=0$. Since $x \in (K\otimes V)^{\pol}$, it generates a finite dimensional representation of $\GL_n$. Let $\sum_{k}(f_{j,k}/g_{j}) \otimes v_{j,k}$ for $1 \leq j \leq r$ be a basis; then every element can be written with common denominator $g_1\cdots g_r$. In particular, the $\GL_n$-orbit of the divisor $g=0$ is contained in $g_1 \cdots g_r =0$ and hence is finite. But $\GL_n$ is connected, so the irreducible components of $g=0$ are preserved. Thus $g$ is semi-invariant under $\GL_n$. Any one-dimensional polynomial representation of $\GL_n$ must be of the form $\bS_{d,...,d}(\bC^n)$. But $g \in A(\bC^m)$ and is nonzero, and so it must be the case that $g$ is actually invariant under $\GL_n$ ($d$ must be zero because otherwise $\bS_{d,...,d}(\bC^n) = 0$), and thus under $\GL_{\infty}$. Since $A^{\GL_{\infty}}=\bC$, we conclude that $g$ is constant, and so $x \in A \otimes V$, as required.
\end{proof}

There is also a version of the above discussion for bivariate tca's. The statements and proofs are nearly identical.

\section{$\Mod_K$ and algebraic representations} \label{sec:modK-groups}

\subsection{The main theorem and its consequences}

A representation of $\bO_{\infty}=\bigcup_{n \ge 1} \bO_n$ is {\bf algebraic} if it appears as a subquotient of a (possibly infinite) direct sum of tensor powers of the standard representation $\bC^{\infty}$. We write $\Rep(\bO_{\infty})$ for the category of such representations. This category was studied in \cite{infrank}.

We let $A=\Sym(\Sym^2(\bC^{\infty}))$ and $K=\Frac(A)$ until \S \ref{ss:other}. We let $e_1, e_2, \ldots$ be a basis for $\bC^{\infty}$, and let $x_{i,j}=e_i e_j$, so that $A=\bC[x_{i,j}]$. Define $\fm \subset \vert A \vert$ to be the ideal generated by $x_{i,i}-1$ and $x_{i,j}$ for $i \ne j$. This ideal is not stable by $\GL_{\infty}$, but is stable by $\bO_{\infty}$. The quotient $A/\fm$ is isomorphic to $\bC$. For an $A$-module $M$, define $\wt{\Phi}(M)=M/\fm M$. This is naturally a representation of $\bO_{\infty}$. The main result of \S \ref{sec:modK-groups} is the following theorem (see \S \ref{ss:other} for analogous results in the other two cases):

\begin{theorem} \label{thm:modK-equiv}
The functor $\wt{\Phi}$ induces an equivalence of categories $\Phi \colon \Mod_K \to \Rep(\bO_{\infty})$.
\end{theorem}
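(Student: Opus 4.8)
The plan is to establish the equivalence by checking that $\wt\Phi$, after restricting attention to the Serre quotient, is essentially surjective, full, and faithful; the key technical point is to produce a quasi-inverse and to compare $\wt\Phi$ with the well-behaved section functor $S \colon \Mod_K \to \Mod_A$. First I would check that $\wt\Phi$ kills torsion modules: if $M$ is a finitely generated torsion $A$-module then $\ann(M) \ne 0$ by Corollary~\ref{cor:nz-ann}, and since $\ann(M)$ is $\GL_\infty$-stable it contains a nonzero element that remains nonzero mod $\fm$ (a nonzero $\GL_\infty$-subrepresentation of $A$ cannot lie in $\fm$, as $\fm$ is not $\GL_\infty$-stable and $A/\fm = \bC$); a short argument upgrades this to show $\wt\Phi(M) = 0$ for all torsion $M$, so $\wt\Phi$ factors through a functor $\Phi \colon \Mod_K \to \Rep(\bO_\infty)$. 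Equivalently, I would identify $\wt\Phi(M)$ with $(M \otimes_A K)/\fm_K(M \otimes_A K)$ where $\fm_K = \fm K$ is the maximal ideal of $K$ cutting out the ``generic'' non-degenerate form, so that $\Phi$ really only depends on the localization.

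Next I would build the candidate quasi-inverse. Given an algebraic $\bO_\infty$-representation $U$, form $K \otimes_{\bC} U$ and equip it with the twisted (diagonal) $\GL_\infty$-action coming from the identification of $K$ with functions on the $\GL_\infty$-orbit $U_{\mathrm{gen}}$ of the standard form and the transitivity of that action with stabilizer $\bO_\infty$ — concretely, one trivializes the universal bundle over the orbit using $\fm$ and transports the $\bO_\infty$-action. The content to verify is that $K \otimes_{\bC} U$, so defined, is a genuine $K$-module in the sense of \S\ref{sec:serrequotient} (spanned over $K$ by polynomial elements): for $U = (\bC^\infty)^{\otimes k}$ this is a direct computation comparing the ``twisted'' and ``untwisted'' $\GL_\infty$-actions on $K \otimes (\bC^\infty)^{\otimes k}$, and the general case follows by passing to subquotients. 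This gives a functor $\Psi \colon \Rep(\bO_\infty) \to \Mod_K$, and by construction $\Phi \circ \Psi \cong \id$ since reducing $K \otimes U$ mod $\fm_K$ recovers $U$ with its $\bO_\infty$-action.

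It then remains to show $\Psi \circ \Phi \cong \id$, i.e.\ that every $K$-module $M$ is recovered from $\wt\Phi$ of it (more precisely from $\Phi$ applied to $M$). Here I would use that $M$ has a presentation $K \otimes V_1 \to K \otimes V_0 \to M \to 0$ with $V_i$ finite length polynomial representations of $\GL_\infty$, apply $\Phi$, and compare: one needs that the map $\Psi(\Phi(M)) \to M$ adjoint to the unit is an isomorphism, which reduces by the five lemma and right-exactness to the case $M = K \otimes V$. For $M = K \otimes V$ the claim becomes the assertion that $\Phi(K\otimes V) = (K \otimes V)/\fm(K \otimes V)$, as an $\bO_\infty$-representation, is the restriction to $\bO_\infty$ of $V$ (this is Example-style bookkeeping: $\fm$ cuts out the standard form, along which the bundle $K \otimes V$ restricts to $V$ itself), and then that $\Psi$ of that restriction is again $K \otimes V$ — which is exactly the computation of the previous paragraph run backwards. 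Faithfulness and fullness of $\Phi$ then follow formally from $\Phi\Psi \cong \id$ and $\Psi\Phi \cong \id$.

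\emph{Main obstacle.} The delicate step is the comparison of the two $\GL_\infty$-actions on $K \otimes (\bC^\infty)^{\otimes k}$ — the ``honest'' tensor-product action versus the ``twisted'' one defined via the trivialization by $\fm$ — and showing the twisted module is still a legitimate object of $\Mod_K$ (spanned by polynomial vectors) and that $\wt\Phi$ intertwines everything correctly. This is where one genuinely uses the geometry of the orbit of non-degenerate forms: that $\GL_\infty$ acts transitively on it with stabilizer $\bO_\infty$, that $\fm$ provides an $\bO_\infty$-equivariant (but not $\GL_\infty$-equivariant) splitting, and that passing between the $\GL_\infty$- and $\bO_\infty$-pictures is an equivalence. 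The ``fraught with errors'' geometric heuristic in the introduction is precisely this step made rigorous, and I expect it to require the most care; the rest of the argument is formal manipulation of Serre quotients and presentations.
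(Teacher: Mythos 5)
Your plan is genuinely different from the paper's: you want to build an explicit quasi-inverse $\Psi \colon \Rep(\bO_\infty) \to \Mod_K$ via a geometric ``induced bundle'' construction and then verify $\Phi\Psi \cong \id$ and $\Psi\Phi \cong \id$, whereas the paper checks faithfulness, fullness, and essential surjectivity separately. In principle this is a reasonable strategy, and your opening step (that $\wt\Phi$ kills torsion and hence descends) is in the same spirit as the paper's Lemma~\ref{lem:m}, though you should be more careful: it is not quite enough that a nonzero $\GL_\infty$-subrepresentation cannot sit inside $\fm$; one needs that a nonzero \emph{ideal} $I$ satisfies $I + \fm = A$, and the paper proves this by the orbit-dimension argument on $\Spec(A_n)$. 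Your phrase ``$\fm_K = \fm K$ is the maximal ideal of $K$'' is also off --- $K$ is a field, so you must mean the maximal ideal of $A_\fm$; the functor really factors through the fiber of the localization $A_\fm$, as in Proposition~\ref{prop:varphiM}.

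The genuine gap is in the construction and verification of $\Psi$. You assert that for $U = (\bC^\infty)^{\otimes k}$ the twisted $K \otimes_{\bC} U$ is spanned over $K$ by polynomial elements ``by a direct computation,'' and that the general case then ``follows by passing to subquotients.'' This is precisely where all the difficulty is concentrated, and your sketch does not say why it should work. To see the twisted module as a legitimate object of $\Mod_K$, one must untwist, i.e.\ compare the $\bO_\infty$-twisted action with the honest tensor-product $\GL_\infty$-action on $K \otimes V$; this comparison is not a formal consequence of transitivity of $\GL_\infty$ on nondegenerate forms, because the objects involved are not finite-dimensional and there is no actual variety on which $\GL_\infty$ acts. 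The paper's proof supplies exactly what your plan is missing: the local structure results (Propositions~\ref{prop:local-at-m} and~\ref{prop:varphiM}) give an explicit isomorphism $M_\fm \cong (M/\fm M \otimes \bC[\ol B])^H_\fm$ which serves as the rigorous substitute for the ``trivialization by $\fm$,'' and the fullness argument (Lemmas~\ref{lem:V}--\ref{lem:iwasawa} and~\ref{lem:charf}) shows that $\bO_\infty$-equivariant data on the fiber plus $B$-equivariant data near $\fm$ recover a $\GL_\infty$-equivariant map --- with Lemma~\ref{lem:iwasawa}, an Iwasawa-type density statement $\bO_\infty B \subset \GL_\infty$, as the crucial input. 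Your reduction to subquotients also quietly assumes that $\bO_\infty$-stable subspaces of $(\bC^\infty)^{\otimes k}$ propagate to $\GL_\infty$-stable $\vert K\vert$-subspaces of the twisted module, which is essentially the fullness of $\Phi$ and hence circular. To make your route work you would need to supply an analogue of the Iwasawa lemma and the $\bC[\ol B]^H$ machinery; as written, the hard step is deferred rather than solved.
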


We give the proof in the following subsections. The precise definition of $\Phi$ is given in \S \ref{ss:phi-defn}. For now, we note the following consequences of this theorem:

\begin{corollary} \label{cor:list}
We have the following:
\begin{enumerate}[\indent \rm (a)]
\item Finitely generated objects of $\Mod_K$ have finite length.
\item If $V$ is a finite length polynomial representation of $\GL_{\infty}$ then $K \otimes V$ is a finite length injective object of $\Mod_K$, and all finite length injective objects have this form.
\item Associating to $\lambda$ the socle of $K \otimes \bS_{\lambda}(\bC^{\infty})$ gives a bijection between partitions and isomorphism classes of simple objects of $\Mod_K$. 
\item Every finite length object $M$ of $\Mod_K$ has a finite injective resolution $M \to I_{\bullet}$ where each $I_k$ is a finite length injective object.
\end{enumerate}
\end{corollary}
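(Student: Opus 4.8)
The plan is to deduce all four parts from the structure theory of $\Rep(\bO_\infty)$ established in \cite{infrank}, transported across the equivalence $\Phi\colon\Mod_K\to\Rep(\bO_\infty)$ of Theorem~\ref{thm:modK-equiv}. The only preliminary step I would carry out is to pin down $\Phi$ on the basic objects $K\otimes V$, where $V$ is a polynomial representation of $\GL_\infty$. Since $K\otimes V=F(T(A\otimes V))$ and $\Phi$ is induced by $\wt\Phi$, one gets $\Phi(K\otimes V)\cong\wt\Phi(A\otimes V)=(A\otimes V)/\fm(A\otimes V)$; as $\fm$ is an ideal of $\lvert A\rvert$ with $A/\fm\cong\bC$ and $A\otimes V$ is free over $\lvert A\rvert$, this quotient is just $V$, with the residual $\bO_\infty$-action being the restriction of the $\GL_\infty$-action. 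Thus $\Phi$ carries $K\otimes V$ to $V|_{\bO_\infty}$. Alongside this I would record the following inputs from \cite{infrank}: for $V$ of finite length, $V|_{\bO_\infty}$ has finite length in $\Rep(\bO_\infty)$ and is injective; conversely every finite length injective of $\Rep(\bO_\infty)$ is of this form, being a finite direct sum of the indecomposable injectives $\bS_\lambda(\bC^\infty)|_{\bO_\infty}$, whose socles $L_\lambda$ are simple and run without repetition over all simple objects; and every finite length object of $\Rep(\bO_\infty)$ admits a finite resolution by finite length injectives.

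Granting these facts, the four parts follow with essentially no further work. For (a): a finitely generated object of $\Mod_K$ is a quotient of some $K\otimes V$ with $V$ of finite length, which under the exact equivalence $\Phi$ corresponds to a quotient of $V|_{\bO_\infty}$, hence has finite length; so $K\otimes V$ and all its quotients have finite length. For (b): injectivity and finite length of $K\otimes V$ are obtained by pulling the corresponding statements about $V|_{\bO_\infty}$ back through $\Phi$ and using (a); and a finite length injective of $\Mod_K$ maps under $\Phi$ to a finite length injective of $\Rep(\bO_\infty)$, which is some $V|_{\bO_\infty}=\Phi(K\otimes V)$, hence is isomorphic to $K\otimes V$. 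For (c): $\Phi$ sends $\operatorname{socle}(K\otimes\bS_\lambda(\bC^\infty))$ to $\operatorname{socle}(\bS_\lambda(\bC^\infty)|_{\bO_\infty})=L_\lambda$, so since $\lambda\mapsto L_\lambda$ is a bijection onto the simples of $\Rep(\bO_\infty)$ and $\Phi$ is an equivalence, the stated assignment is a bijection onto the simples of $\Mod_K$. For (d): transport the finite injective resolution of $\Phi(M)$ in $\Rep(\bO_\infty)$ back through $\Phi^{-1}$ and use (b) to see each term has the form $K\otimes I_k$.

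I expect the only real friction to be bookkeeping rather than mathematics: ensuring that the precise statements invoked from \cite{infrank}—in particular the classification of finite length injectives needed for (b) and the existence of \emph{finite} (not merely eventually-periodic or infinite) injective resolutions needed for (d)—are available there in exactly the needed form, and checking the compatibility of $\Phi$ with $\wt\Phi$ carefully enough to justify $\Phi(K\otimes V)=V|_{\bO_\infty}$ once the construction of $\Phi$ in \S\ref{ss:phi-defn} is in hand. No step should require a genuinely new argument; everything is a formal consequence of Theorem~\ref{thm:modK-equiv} plus cited properties of $\Rep(\bO_\infty)$.
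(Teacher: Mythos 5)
Your proposal is correct and coincides with the paper's proof: both arguments transport the structure theory of $\Rep(\bO_\infty)$ from \cite{infrank} (specifically \cite[Propositions 4.1.5, 4.2.9]{infrank} and the dual of the resolutions in \cite[(4.3.9)]{infrank}) across the equivalence $\Phi$ of Theorem~\ref{thm:modK-equiv}. The preliminary identification $\Phi(K\otimes V)\cong V|_{\bO_\infty}$ that you carefully verify is in fact already recorded in the paper as the first lemma of \S\ref{ss:phi-defn}, so your worry about ``friction'' there is unfounded.
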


\begin{proof}
These properties are proven for $\Rep(\bO_{\infty})$ in \cite{infrank}:

(a) \cite[Proposition 4.1.5]{infrank}, 

(b,c) \cite[Proposition 4.2.9]{infrank}, 

(d) dualize the explicit projective resolutions in \cite[(4.3.9)]{infrank}. \qedhere
\end{proof}

\subsection{Local structure at $\fm$ of $A$-modules}
\label{ss:struc-at-m}

The main result of this section is the following:

\begin{proposition} \label{prop:local-at-m}
Let $M$ be an $A$-module. Then $M_{\fm}$ is a free $A_{\fm}$-module.
\end{proposition}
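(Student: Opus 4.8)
The plan is to exploit the action of $\bO_\infty$, which fixes the maximal ideal $\fm$, together with the semisimplicity of algebraic $\bO_\infty$-representations, to split the filtration of $M_\fm$ by powers of $\fm$. First I would reduce to the case where $M$ is finitely generated: an arbitrary $A$-module is a filtered colimit of its finitely generated submodules, and freeness of $A_\fm$-modules is preserved under filtered colimits (a filtered colimit of free modules over a local ring need not be free in general, so more care is needed here—I would instead argue that it suffices to check that $\Tor_1^{A_\fm}(A_\fm/\fm A_\fm, M_\fm)=0$ for all $M$, and this vanishing passes to colimits since $\Tor$ commutes with filtered colimits, and then invoke that a flat module over the Noetherian-after-completion local ring $A_\fm$ with the right properties is free). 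Actually the cleaner route: show $M_\fm$ is flat over $A_\fm$ and then upgrade flatness to freeness.

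The heart of the argument is the following. Consider the $\fm$-adic filtration $M_\fm \supseteq \fm M_\fm \supseteq \fm^2 M_\fm \supseteq \cdots$. The associated graded pieces $\fm^n M_\fm / \fm^{n+1} M_\fm$ are modules over $A_\fm/\fm A_\fm = \bC$, hence $\bC$-vector spaces, and they carry an action of $\bO_\infty$ (since $\bO_\infty$ preserves $\fm$ and hence each $\fm^n$). Because $M$ is a polynomial representation of $\GL_\infty$, it is in particular an algebraic representation of $\bO_\infty$, and algebraic representations of $\bO_\infty$ are semisimple \cite{infrank}; each graded piece, being a subquotient, is therefore also a semisimple (algebraic) $\bO_\infty$-representation. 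The key point is then to show that the surjection $M_\fm \twoheadrightarrow M_\fm/\fm M_\fm = \wt\Phi(M)_\fm$ admits an $\bO_\infty$-equivariant section, or more precisely that one can choose an $\bO_\infty$-equivariant splitting of the whole $\fm$-adic filtration, identifying $M_\fm$ with $\prod_n \fm^n M_\fm/\fm^{n+1} M_\fm$ as a filtered $\bO_\infty$-module; one then transports the $A_\fm$-module structure across to see it is free. Concretely, I expect to pick an $\bO_\infty$-equivariant $\bC$-linear lift $\overline{M} \to M_\fm$ of $\overline{M} := M_\fm/\fm M_\fm$ (possible by semisimplicity and the fact that $\overline M$ is algebraic, so $\Hom$-spaces of the relevant extensions vanish), and then check that the induced map $A_\fm \otimes_\bC \overline{M} \to M_\fm$ is an isomorphism. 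Surjectivity is Nakayama (after checking $M_\fm$ is $\fm$-adically separated, or by a degree/filtration argument); injectivity requires comparing associated gradeds.

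The step I expect to be the main obstacle is the completeness/separatedness issue: $A_\fm$ is not Noetherian in the classical sense relative to $\GL_\infty$, and $M_\fm$ need not be $\fm$-adically complete, so I cannot simply quote the structure theory of modules over complete local rings. I would need to argue directly, probably by using that $A$ is a polynomial ring $\bC[x_{i,j}]$ and that after localizing at $\fm$ and changing coordinates (replacing $x_{i,i}$ by $x_{i,i}-1$) the ring $A_\fm$ is a localization of a polynomial ring at a maximal ideal generated by a regular sequence, so that $\gr_\fm A_\fm$ is again a polynomial ring and the Artin–Rees / graded-freeness machinery applies verbatim to each finitely generated $M$. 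The $\bO_\infty$-equivariant splitting then glues these finitary pictures together. A secondary subtlety is ensuring the lift of $\overline M$ can be chosen compatibly across the whole filtration rather than just the first step; this should follow by induction on filtration degree, at each stage splitting off an algebraic $\bO_\infty$-subrepresentation, using semisimplicity to guarantee the relevant $\Ext^1$ of algebraic representations vanishes.
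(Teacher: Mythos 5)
Your proposal has a genuine gap at its central step. You assert that ``algebraic representations of $\bO_\infty$ are semisimple,'' and you use this to split the $\fm$-adic filtration $\bO_\infty$-equivariantly. This is false: the category $\Rep(\bO_\infty)$ is \emph{not} semisimple, and the nontriviality of its extension structure is precisely what makes \cite{infrank} (and the present paper) substantive. Concretely, the restriction of $\Sym^2(\bC^\infty)$ to $\bO_\infty$ has a one-dimensional trivial quotient (pairing with the form), and this surjection does not split, since a splitting would require the invariant vector $\sum_i e_i^2$, which is not an element of the direct limit $\Sym^2(\bC^\infty)=\bigcup_n \Sym^2(\bC^n)$. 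This very representation is $\fm/\fm^2$ in your notation, so the failure of semisimplicity hits the first graded piece you would need to split. If $\Rep(\bO_\infty)$ were semisimple, Corollary~\ref{cor:list}(d) would be vacuous and Theorem~\ref{mainthm} would follow almost immediately; so the semisimplicity you invoke cannot be repaired without collapsing the whole problem.

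The paper's actual proof also uses a semisimplicity input, but of a much smaller group where it is automatic: $H=B\cap\bO_\infty$, the group of diagonal $\pm1$ matrices, whose admissible representations are by definition sums of characters. Rather than filtering $M_\fm$ by powers of $\fm$, the paper uses the $\bC[\ol B]$-comodule structure on $M$ to build a canonical map $\varphi_M\colon M\to (M/\fm M\otimes\bC[\ol B])^H$, proves that the target localized at $\fm$ is a free $A_\fm$-module with fiber $M/\fm M$ (Proposition~\ref{prop:H-ad}, using $H$-semisimplicity), and then shows by a Nakayama argument on the finitely generated kernel that $(\varphi_M)_\fm$ is an isomorphism (Proposition~\ref{prop:varphiM}). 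This sidesteps both the $\bO_\infty$-extension problem and the $\fm$-adic completeness/separatedness issues you flagged but did not resolve. Your secondary worry about passing from finitely generated $M$ to general $M$ (filtered colimits of free modules over a local ring need not be free) is also real, and the paper handles the colimit only at the level of showing the target commutes with colimits, using freeness of the target rather than of $M_\fm$ itself as the structural anchor.
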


Let $\rM_{\infty}$ be the set of infinite complex matrices, indexed by $\bZ_{\ge 0}$. Let $\ol{B} \subset \rM_{\infty}$ be the set of upper triangular matrices, and let $B \subset \ol{B}$ be the group of invertible upper triangular matrices. Let $b_{i,j} \colon \rM_{\infty} \to \bC$ be the function taking the $(i,j)$ matrix entry. We let $\bC[\ol{B}]$ be the polynomial ring $\bC[b_{i,j}]_{i \le j}$, and we let $\bC[B]=\bC[\ol{B}][b_{i,i}^{-1}]$. Elements of $V \otimes \bC[B]$ can be thought of as (certain) functions $B \to V$.

If $V$ is a polynomial representation of $\GL_{\infty}$ then every $v \in V$ spans a finite dimensional subrepresentation of $B$. It follows that we can give $V$ the structure of a $\bC[B]$-comodule, that is, we have a map $V \to V \otimes \bC[B]$. Explicitly, this map takes $v$ to the function $B \to V$ given by $b \mapsto bv$. In fact, the image of the comultiplication map is contained in $V \otimes \bC[\ol{B}]$.

Let $H=B \cap \bO_{\infty}$. Explicitly, $H$ is the group of diagonal matrices with diagonal entries $\pm 1$, almost all of which are 1. If $V$ is a polynomial representation of $\GL_{\infty}$ then the map $V \to V \otimes \bC[\ol{B}]$ above actually lands in the $H$-invariants of the target. Here we let $B$, and $H$, act on $\bC[\ol{B}]$ by right translation.

Let $M$ be an $A$-module. We then obtain a map
\begin{displaymath}
M \to M \otimes \bC[\ol{B}] \to M/\fm M \otimes \bC[\ol{B}].
\end{displaymath}
The image lands in the $H$-invariants (note that $\fm$ is $H$-stable, so $H$ still acts on $M/\fm M$), and so we have a map
\begin{displaymath}
\varphi_M \colon M \to (M/\fm M \otimes \bC[\ol{B}])^H.
\end{displaymath}
We now study this map. We first treat the case where $M=A$. Then $A/\fm A=\bC$, and so our map takes the form
\begin{displaymath}
\varphi_A \colon A \to \bC[\ol{B}]^H
\end{displaymath}
The invariant ring $\bC[\ol{B}]^H$ is easily seen to be the subring of $\bC[\ol{B}]$ generated by the $b_{i,j} b_{i,k}$, with $i \le j,k$. Since $\ol{B}$ acts on $A$ by algebra homomorphisms, the map $A \to A \otimes \bC[\ol{B}]$ is an algebra homomorphism, and so $\varphi_A$ is an algebra homomorphism as well. Due to this, it suffices to understand where the generators $x_{i,j}$ go. For $m \in \ol{B}$, we have
\begin{displaymath}
me_i = \sum_{k \le i} b_{k,i}(m) e_k,
\end{displaymath}
and so
\begin{displaymath}
m(e_i e_j) = \left( \sum_{k \le i} b_{k,i}(m) e_k \right) \left( \sum_{\ell \le j} b_{\ell,j}(m) e_{\ell} \right)
=\sum_{k \le i, \ell \le j} b_{k,i}(m) b_{\ell, j}(m) e_k e_{\ell}.
\end{displaymath}
Thus the map $A \to A \otimes \bC[\ol{B}]$ is given by
\begin{displaymath}
x_{i,j} \mapsto \sum_{k \le i,\ \ell \le j} b_{k,i} b_{\ell,j} x_{k,\ell}.
\end{displaymath}
To compute $\varphi_A$, we now apply the homomorphism $A \to A/\fm A=\bC$, which takes $x_{i,j}$ to $\delta_{i,j}$. Set $X_{i,j}=\varphi(x_{i,j})$, we thus find
\begin{displaymath}
X_{i,j} = \varphi(x_{i,j}) = \sum_{k \le i,j} b_{k,i} b_{k,j}.
\end{displaymath}

\begin{proposition}
\label{prop:phiA}
The localization of $\varphi_A$ at $\fm$ is an isomorphism.
\end{proposition}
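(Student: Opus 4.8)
The plan is to prove that $\varphi_A$ becomes both injective and surjective after localizing at $\fm$, treating the two halves separately. Set $C := \bC[\ol B]^H$ and $C_\fm := C \otimes_A A_\fm$, and write $\varphi_{A,\fm} \colon A_\fm \to C_\fm$ for the localization of $\varphi_A$. Injectivity is essentially free: $\varphi_A$ factors as $A \to C \hookrightarrow \bC[\ol B]$, so since localization is exact it is enough to see that $A \to \bC[\ol B]$ is injective. This map is the comorphism of the morphism of schemes $\ol B \to \Spec A$ sending an upper triangular matrix $B$ to $B^{\mathsf T}B$, so its injectivity amounts to dominance of this morphism, which can be checked on the finite truncations. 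There it is the classical fact that a symmetric matrix with invertible leading principal minors admits an $LDL^{\mathsf T}$ decomposition and hence, after extracting square roots over $\bC$, is of the form $B^{\mathsf T}B$ with $B$ upper triangular; such matrices form a dense subset, so the truncated map is dominant.

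For surjectivity of $\varphi_{A,\fm}$ I would exploit the explicit description of $C$ recalled above: writing $c_{k;i,j} := b_{k,i}b_{k,j}$ for $k \le i,j$, the ring $C$ is generated as a $\bC$-algebra by these elements, and $\varphi_A(x_{i,j}) = \sum_{k \le \min(i,j)} c_{k;i,j}$. I claim, by induction on $k$, that each $c_{k;i,j}$ lies in the image of $\varphi_{A,\fm}$ and that $c_{k;k,k} = b_{k,k}^2$ is a unit in $C_\fm$. Granting this for indices below $k$, the identity $c_{k;k,j} = \varphi_A(x_{k,j}) - \sum_{\ell<k} c_{\ell;k,j}$ places $c_{k;k,j}$ in the image for all $j \ge k$; the case $j = k$ reads $b_{k,k}^2 = \varphi_A(x_{k,k}-z)$ with $\varphi_A(z) = \sum_{\ell<k}c_{\ell;k,k}$, a sum of squares of off-diagonal entries of $B$, hence vanishing at the identity matrix $I \in \ol B$. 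Since evaluation at $I$ on the $\bC[\ol B]$-side corresponds, under $\varphi_A$, to reduction modulo $\fm$ on the $A$-side --- this is precisely the computation $\varphi_A(x_{i,j}) = \sum_k b_{k,i}b_{k,j}$ specialized at $B = I$ --- we conclude $z \in \fm A_\fm$, so $x_{k,k}-z \equiv 1 \pmod{\fm}$ is a unit in $A_\fm$ and $b_{k,k}^2$ is a unit in $C_\fm$. The relation $c_{k;i,j} = c_{k;k,i}\, c_{k;k,j}/c_{k;k,k}$ then handles the remaining generators and closes the induction. As $C$ is generated by the $c_{k;i,j}$ and every element of $A \setminus \fm$ maps to a unit of $C_\fm$, it follows that $\varphi_{A,\fm}$ is surjective.

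The step I expect to be the crux is the unit statement, that $b_{k,k}^2$ becomes invertible in $C_\fm$: it is what makes the induction run, and it rests on correctly matching the quotient $A \to A/\fm A = \bC$ with evaluation at $I \in \ol B$ through $\varphi_A$. Once that dictionary is in place, both halves of the proof are routine --- injectivity from the classical Cholesky-type factorization, surjectivity from a direct generator chase --- and nothing deeper seems to be needed.
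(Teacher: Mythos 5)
Your proof is correct and takes essentially the same approach as the paper: both arguments run a generator chase on the $H$-invariants $b_{k,i}b_{k,j}$, using the formula for $\varphi_A(x_{i,j})$ and the fact that $b_{k,k}^2$ becomes a unit after localizing at $\fm$. The only cosmetic differences are that you spell out the injectivity (which the paper leaves as "easy to see," via the dominance of $B\mapsto B^{\mathsf T}B$) and that you organize the surjectivity induction around the relation $c_{k;i,j} = c_{k;k,i}\,c_{k;k,j}/c_{k;k,k}$ where the paper instead expands the product $X_{i,j}X_{i,k}$.
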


\begin{proof}
Let $\fm^e$ be the extension of $\fm$ to $\bC[\ol{B}]^H$ via $\varphi_A$. Let $i \le j$. We have $X_{i,j}=b_{i,i} b_{i,j}+X_{i,j}'$, where $X_{i,j}'$ is the sum of the $b_{k,i} b_{k,j}$ with $k<i$. Since $X_{i,j} \in \fm$ for $i \ne j$ and $X_{i,i}-1 \in \fm$, an easy inductive argument shows that $b_{i,i}^2-1 \in \fm$ and $b_{i,j}b_{i,k} \in \fm$ if $i \ne j$ or $i \ne k$. In particular, $b_{i,i}^2$ is a unit in the localization. The expression $X_{i,j} X_{i,k} = b_{i,i}^2 b_{i,j} b_{i,k} + \cdots$ (where the missing terms involve only smaller variables) shows, inductively, that $b_{i,j} b_{i,k}$ belongs to the image of $\varphi_A$ localized at $\fm$. Since these generate $\bC[\ol{B}]^H$, the result follows. (It is easy to see that $\varphi_A$, and hence its localization, is injective.)
\end{proof}

A {\bf monomial character} of $H$ is a homomorphism $H \to \bC^{\times}$ of the form $(z_1, z_2, \ldots) \mapsto z_1^{n_1} z_2^{n_2} \cdots$ where the $n_i$ are integers (it suffices to consider $n_i \in \{0,1\}$) and $n_i=0$ for $i \gg 0$. A representation of $H$ is {\bf admissible} if it is a sum of monomial characters.

\begin{proposition} \label{prop:H-ad}
Let $V$ be an admissible representation of $H$. The localization of $(V \otimes \bC[\ol{B}])^H$ at $\fm$ is a free $A_{\fm}$-module, and the fiber at $\fm$ is canonically isomorphic to $V$.
\end{proposition}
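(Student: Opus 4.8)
The plan is to decompose the problem along the monomial characters of $H$ and reduce to the rank-one case. Since $V$ is admissible we may write $V=\bigoplus_j \chi_{S_j}$ as a (possibly infinite) direct sum of monomial characters, where $\chi_S$ denotes the monomial character attached to a finite set $S$ of indices. Because each $b_{i,j}$ is an $H$-eigenvector, $\bC[\ol B]$ decomposes into isotypic pieces $\bC[\ol B]^{(\chi)}$ for the monomial characters $\chi$, with $\bC[\ol B]^{(\bone)}=\bC[\ol B]^H$ and $\bC[\ol B]^{(\chi)}\cdot\bC[\ol B]^{(\chi')}\subseteq\bC[\ol B]^{(\chi\chi')}$. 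Noting that $\chi_S^{-1}=\chi_S$ (the eigenvalues are $\pm1$), one finds $(\chi_S\otimes\bC[\ol B])^H\cong\bC[\ol B]^{(\chi_S)}$ as $\bC[\ol B]^H$-modules, so
\[
(V\otimes\bC[\ol B])^H\;\cong\;\bigoplus_j \bC[\ol B]^{(\chi_{S_j})}.
\]
Since localization and the fiber functor commute with direct sums, it suffices to show that $\bigl(\bC[\ol B]^{(\chi_S)}\bigr)_{\fm}$ is a free $A_{\fm}$-module of rank one, with fiber canonically $\chi_S$.

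For freeness, put $b_S=\prod_{i\in S}b_{i,i}\in\bC[\ol B]^{(\chi_S)}$. Given any $g\in\bC[\ol B]^{(\chi_S)}$, the product $gb_S$ lies in $\bC[\ol B]^{(\chi_S^2)}=\bC[\ol B]^H$, and $b_S^2=\prod_{i\in S}b_{i,i}^2\in\bC[\ol B]^H$ satisfies $b_S^2\equiv1\pmod{\fm^e}$ by the computation in the proof of Proposition~\ref{prop:phiA}; via the isomorphism $A_{\fm}\xrightarrow{\ \sim\ }(\bC[\ol B]^H)_{\fm}$ of that proposition, $b_S^2$ is therefore a unit in $A_{\fm}$. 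Hence $g=(gb_S)\,b_S^{-2}\,b_S$ lies in $A_{\fm}\cdot b_S$, so $\bigl(\bC[\ol B]^{(\chi_S)}\bigr)_{\fm}=A_{\fm}\cdot b_S$. Since $\bC[\ol B]$ is a domain and $b_S\neq0$, multiplication by $b_S$ is injective, so this module is free of rank one.

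For the fiber, let $\operatorname{ev}\colon\bC[\ol B]\to\bC$ be evaluation at the identity matrix, $b_{i,j}\mapsto\delta_{i,j}$. Restricting functions from $\ol B$ to the subgroup $H$ is $H$-equivariant, so $\operatorname{ev}$ induces, after tensoring with $V$ and taking $H$-invariants, the canonical map $(V\otimes\bC[\ol B])^H\to(V\otimes\bC[H])^H=V$ (the final identification using admissibility of $V$). From the formula $X_{i,j}=\sum_{k\le i,j}b_{k,i}b_{k,j}$ we get $\operatorname{ev}(X_{i,j})=\delta_{i,j}$, so $\operatorname{ev}\circ\varphi_A\colon A\to\bC$ factors through $A/\fm=\bC$; consequently, after localizing, $\operatorname{ev}$ becomes reduction modulo $\fm$ on $A_{\fm}\cong(\bC[\ol B]^H)_{\fm}$. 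Since $\operatorname{ev}(b_S)=1$, the canonical map restricted to $\bigl(\bC[\ol B]^{(\chi_S)}\bigr)_{\fm}=A_{\fm}\cdot b_S$ sends $a\,b_S\mapsto(a\bmod\fm)$, with kernel $\fm A_{\fm}\cdot b_S$; it therefore descends to an isomorphism of the fiber onto $\chi_S$. Summing over $j$ gives that $\bigl((V\otimes\bC[\ol B])^H\bigr)_{\fm}$ is free over $A_{\fm}$ with fiber canonically identified, via $\operatorname{ev}$, with $V$.

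I do not expect a genuinely hard step here: once the monomial-character grading of $\bC[\ol B]$ over $\bC[\ol B]^H$ is unwound, the statement is essentially a consequence of Proposition~\ref{prop:phiA}. The part needing the most care is the localization bookkeeping in the last paragraph, namely checking that evaluation at the identity matrix agrees with reduction modulo $\fm$ on $A_{\fm}$ and that this makes the fiber identification canonical and natural in $V$; one must also not forget the harmless identity $\chi_S^{-1}=\chi_S$ used to pass from $(\chi_S\otimes\bC[\ol B])^H$ to $\bC[\ol B]^{(\chi_S)}$.
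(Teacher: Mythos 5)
Your proof is correct and follows essentially the same route as the paper: reduce to a single monomial character $\chi_S$, exhibit $v\otimes b_S$ with $b_S=\prod_{i\in S}b_{i,i}$ as a generator, and identify the fiber via evaluation at the identity. Where the paper merely gestures at ``an argument similar to the one in the proof of Proposition~\ref{prop:phiA},'' you supply a clean version of it: multiplication by $b_S$ carries $\bC[\ol{B}]^{(\chi_S)}$ into $\bC[\ol{B}]^H$, and since $b_S^2\equiv 1\pmod{\fm^e}$ it becomes a unit in $A_{\fm}$ by Proposition~\ref{prop:phiA}, which at once gives both freeness and the rank-one statement; this replaces the implicit induction on smaller $b$-variables with a single localization observation, and your explicit use of $\operatorname{ev}\circ\varphi_A=$ reduction mod $\fm$ makes the ``canonical'' identification of the fiber with $V$ concrete and visibly functorial. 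No gaps.
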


\begin{proof}
It suffices to treat the case where $V$ is one dimensional. Let $\chi = z_{i_1}\cdots z_{i_r}$ be the corresponding (monomial) character. Then an argument similar to the one in the proof of Proposition~\ref{prop:phiA} shows that for any nonzero $v \in V$, the element $v \otimes (b_{i_1, i_1} \cdots b_{i_r, i_r})$ is an $H$ invariant and the localization of $(V \otimes \bC[\ol{B}])^H$ at $\fm$ is a free $A_{\fm}$-module generated by $v \otimes (b_{i_1, i_1} \cdots b_{i_r, i_r})$. The second statement follows immediately from this.
\end{proof}

Now let $M$ be an $A$-module, and consider the map
\begin{displaymath}
\varphi_M \colon M \to (M/\fm M \otimes \bC[\ol{B}])^H.
\end{displaymath}
The target is naturally a module over the ring $\bC[\ol{B}]^H$, which is itself an $A$-algebra, and one easily verifies that $\varphi_M$ is a map of $A$-modules.

\begin{proposition} \label{prop:varphiM}
Let $M$ be an $A$-module. The localization of $\varphi_M$ at $\fm$ is an isomorphism.
\end{proposition}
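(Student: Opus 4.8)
The plan is to bootstrap from Propositions~\ref{prop:phiA} and~\ref{prop:H-ad} to an arbitrary $A$-module $M$, via three reductions: first to the case of a free module $M = A\otimes W$, then to the case where $W$ has finite length, and finally to handle that case by an explicit triangular computation. I expect essentially all of the work to be in the last step; the delicate point there is exactly the one already appearing in Propositions~\ref{prop:phiA} and~\ref{prop:H-ad}, namely that the squares $b_{j,j}^2$ become units after localizing at $\fm$, and the reason the preliminary reductions cannot be skipped is that the free $A_\fm$-modules that appear have infinite rank, so an isomorphism modulo $\fm$ need not lift to an isomorphism.

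For the first reduction: since $M$ is a polynomial representation, the multiplication map $A\otimes M\to M$ is a surjection of $A$-modules, and iterating yields a presentation $A\otimes W_1\to A\otimes W_0\to M\to 0$ with $W_0,W_1$ polynomial representations of $\GL_{\infty}$. Applying $-\otimes_A A/\fm$ (right exact), together with $(A\otimes W)\otimes_A A/\fm = W$, turns this into an exact sequence $W_1\to W_0\to M/\fm M\to 0$ of $H$-representations; then applying $(-\otimes\bC[\ol{B}])^H$ — which is exact, since $-\otimes_\bC\bC[\ol{B}]$ is exact and $H$-invariants is exact on locally finite $H$-representations over $\bC$ — gives an exact sequence $(W_1\otimes\bC[\ol{B}])^H\to(W_0\otimes\bC[\ol{B}])^H\to(M/\fm M\otimes\bC[\ol{B}])^H\to 0$. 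By naturality of $\varphi$ these fit into a commuting ladder with the maps $\varphi$ as vertical arrows; localizing at $\fm$ and viewing $M_\fm$ and $(M/\fm M\otimes\bC[\ol{B}])^H_\fm$ as the cokernels of the respective left-hand maps, one sees that if $\varphi_{A\otimes W_0}$ and $\varphi_{A\otimes W_1}$ become isomorphisms after localizing, then so does $\varphi_M$. Hence it suffices to treat $M = A\otimes W$. For the second reduction, write $W = \varinjlim_i W_i$ as the filtered union of its finite length subrepresentations; since $A\otimes(-)$, $-\otimes_A A/\fm$, $(-\otimes\bC[\ol{B}])^H$, and localization at $\fm$ all commute with filtered colimits (for $H$-invariants because $H$ is an increasing union of finite groups), $\varphi_{A\otimes W}$ localized at $\fm$ is the filtered colimit of the maps $\varphi_{A\otimes W_i}$ localized at $\fm$, and a filtered colimit of isomorphisms is an isomorphism, so I may assume $W$ has finite length.

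For the final step, unwinding the definition of $\varphi_M$ and using that $A$ fixes its unit gives $\varphi_{A\otimes W}(a\otimes v) = \varphi_A(a)\cdot c_W(v)$, where $c_W\colon W\to(W\otimes\bC[\ol{B}])^H$ sends $v$ to the function $m\mapsto mv$ (the comultiplication). Localizing at $\fm$ and using Proposition~\ref{prop:phiA} to identify $(\bC[\ol{B}]^H)_\fm$ with $A_\fm$ via $\varphi_A$, this exhibits $(\varphi_{A\otimes W})_\fm$ as the $A_\fm$-linear extension of $c_W$ to a map $A_\fm\otimes_\bC W\to(W\otimes\bC[\ol{B}])^H_\fm$, which I must show is an isomorphism. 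Now $W|_H$ is admissible (the weights of a polynomial representation are finitely supported tuples of nonnegative integers, restricting to monomial characters of $H$), so by Proposition~\ref{prop:H-ad} the target is a free $A_\fm$-module with basis $\{v_\alpha\otimes b(\chi_\alpha)\}$, indexed by a weight basis $\{v_\alpha\}$ of $W$, where $v_\alpha$ has weight $\mu^{(\alpha)}$ and $H$-character $\chi_\alpha$ and $b(\chi_\alpha)$ is the product of the $b_{j,j}$ over the support of $\chi_\alpha$. Expanding $mv_\alpha$ for upper triangular $m$, exactly as in the proofs of Propositions~\ref{prop:phiA} and~\ref{prop:H-ad}, should show that $c_W(v_\alpha)$ equals $u_\alpha\cdot(v_\alpha\otimes b(\chi_\alpha))$ plus a combination of basis vectors $v_\beta\otimes b(\chi_\beta)$ with $\mu^{(\beta)}$ strictly below $\mu^{(\alpha)}$ (in the order in which weight is moved to lower-indexed coordinates), with $u_\alpha = \prod_j(b_{j,j}^2)^{\lfloor\mu^{(\alpha)}_j/2\rfloor}$ a unit in $A_\fm$. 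Thus the matrix of $(\varphi_{A\otimes W})_\fm$ in these bases is triangular with unit diagonal, and because $W$ has finite length there are only finitely many weights below any given one, so this matrix is invertible over $A_\fm$ by finite back-substitution. The hard part is pinning down this last computation and the correct partial order on weights.
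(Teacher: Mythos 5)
Your proof takes a genuinely different route from the paper's. The paper does not reduce to free modules at all. Instead it first reduces to $M$ finitely generated over $A$ (using, as you do, that all the relevant functors commute with direct limits). Once $M$ is finitely generated, $M/\fm M$ is \emph{finite-dimensional}, so by Proposition~\ref{prop:H-ad} the target $N = (M/\fm M \otimes \bC[\ol{B}])^H_\fm$ is a \emph{finite-rank} free $A_\fm$-module whose fiber at $\fm$ is $M/\fm M$. The map $(\varphi_M)_\fm$ then induces an isomorphism on fibers at $\fm$, so by Nakayama it is surjective; since $N$ is projective the surjection splits, giving $M_\fm \cong R \oplus N$ with $R = \ker(\varphi_M)_\fm$ a finitely generated direct summand; and again by Nakayama $R/\fm R = 0$ forces $R = 0$. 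The whole argument is two applications of Nakayama and never needs any explicit coefficients. By contrast, your route passes through a two-term free presentation, then through finite-length $W$, and finally to an explicit triangularity computation for $c_W$ in the basis $v_\alpha \otimes b(\chi_\alpha)$ of Proposition~\ref{prop:H-ad}.

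The cost of your route is exactly the step you flag as unfinished: the triangularity of $c_W(v_\alpha)$ with unit diagonal in a suitable weight order. This does look provable --- for $b = du$ with $d$ diagonal and $u$ unipotent upper triangular one has $uv_\alpha = v_\alpha + (\text{strictly dominance-higher weight})$, so the matrix of $c_W$ is block-unitriangular for the dominance order after rescaling by $b(\chi_\alpha)$, and the diagonal entry $\prod_j (b_{j,j}^2)^{\lfloor \mu_j^{(\alpha)}/2\rfloor}$ is a unit in $A_\fm$ by Proposition~\ref{prop:phiA}. But as written it is a sketch, not a proof, and pinning it down (in particular handling weight multiplicities, i.e.\ distinct $\alpha,\beta$ with $\mu^{(\alpha)} = \mu^{(\beta)}$, and justifying that the off-diagonal coefficients genuinely land in $(\bC[\ol{B}]^H)_\fm$) is real work that the paper's argument makes entirely unnecessary. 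The key insight you are missing is that for a finitely generated module the fiber $M/\fm M$ is finite-dimensional, so you do not need to control the map coefficient-by-coefficient: a single comparison of fibers at $\fm$ plus Nakayama suffices. Your first two reductions are correct but also unneeded under that strategy; only the reduction to finitely generated $M$ is used.

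Everything else in your writeup checks out: the formula $\varphi_{A\otimes W}(a\otimes v) = \varphi_A(a)\cdot c_W(v)$, the exactness of $(-\otimes \bC[\ol{B}])^H$ on admissible $H$-representations, the commuting-ladder argument, and the reduction to finite length via filtered colimits. So the approach is viable; it is simply longer and the crux is left undone.
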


\begin{proof}
Note that for any $M$, the quotient $M/\fm M$ is an admissible representation of $H$. Since such representations are semi-simple, it follows that the target of $\phi_M$ commutes with direct limits in $M$. It therefore suffices to treat the case where $M$ is finitely generated as an $A$-module. Let $N=(M/\fm M \otimes \bC[\ol{B}])^H_{\fm}$, and let $R$ be the kernel of $(\varphi_M)_{\fm}$. Since $M/\fm M$ is an admissible representation of $H$, Proposition~\ref{prop:H-ad} shows that $N$ is a free $A_{\fm}$-module whose fiber at $\fm$ is isomorphic to $M/\fm M$. It follows that $(\varphi_M)_{\fm}$ is a surjection, since it is a surjection mod $\fm$ and $N$ is free. We thus have an isomorphism $M_{\fm}=R \oplus N$, which shows that $R$ is finitely generated. Since $(\varphi_M)_{\fm}$ induces an isomorphism on the fiber at $\fm$, we see that $R/\fm R=0$. Thus $R=0$ by Nakayama's lemma, which completes the proof.
\end{proof}

Proposition~\ref{prop:local-at-m} follows from the above proposition, since as noted in the above proof, the target of $(\varphi_M)_{\fm}$ is a free $A_{\fm}$-module.

\subsection{Definition of $\Phi$}
\label{ss:phi-defn}

We begin with some simple observations.

\begin{lemma}
If $V$ is a polynomial representation of $\GL_{\infty}$ then $\wt{\Phi}(A \otimes V)$ is isomorphic to the restriction of $V$ to $\bO_{\infty}$. For any $A$-module $M$, $\wt{\Phi}(M)$ is an algebraic representation of $\bO_{\infty}$.
\end{lemma}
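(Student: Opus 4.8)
The plan is to reduce the general statement to the case $M = A \otimes V$ and then handle that case directly. Let me first explain the reduction. Given an arbitrary $A$-module $M$, write it as a cokernel $A \otimes W \to A \otimes V \to M \to 0$, where $V$ and $W$ are polynomial representations of $\GL_\infty$ (every $A$-module is a quotient of a projective, and the kernel of such a quotient is again a quotient of a projective). Applying the right-exact functor $\wt\Phi(-) = (-)/\fm(-)$ gives an exact sequence $\wt\Phi(A \otimes W) \to \wt\Phi(A \otimes V) \to \wt\Phi(M) \to 0$ of $\bO_\infty$-representations. So if I know that $\wt\Phi(A \otimes V)$ is the restriction of $V$ to $\bO_\infty$ — in particular algebraic — then $\wt\Phi(M)$ is a quotient of an algebraic representation, hence algebraic (the category $\Rep(\bO_\infty)$ is closed under subquotients by definition). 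This disposes of the second sentence, granted the first.

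For the first sentence I would compute $\wt\Phi(A \otimes V) = (A \otimes V)/\fm(A \otimes V)$ directly. Since $\fm(A\otimes V) = (\fm A)\otimes V$, we have $(A\otimes V)/\fm(A\otimes V) \cong (A/\fm A)\otimes V \cong \bC \otimes V = V$ as vector spaces, using $A/\fm \cong \bC$. The only content is to check that the $\bO_\infty$-action induced on this quotient agrees with the restriction of the $\GL_\infty$-action on $V$. Here one uses that $\fm$ is $\bO_\infty$-stable (noted in the text), so $\bO_\infty$ genuinely acts on the quotient; and the natural $\bO_\infty$-equivariant splitting is exhibited by the composite $V = \bC \otimes V \hookrightarrow A \otimes V \twoheadrightarrow (A\otimes V)/\fm(A\otimes V)$, where the first map uses the unit $\bC \to A$. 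One checks this composite is $\bO_\infty$-equivariant — the unit $\bC \to A$ is $\GL_\infty$-equivariant hence $\bO_\infty$-equivariant, and the remaining maps are $\bO_\infty$-equivariant — and that it is an isomorphism of vector spaces by the dimension count above. Hence it is an isomorphism of $\bO_\infty$-representations $V|_{\bO_\infty} \xrightarrow{\ \sim\ } \wt\Phi(A\otimes V)$.

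I expect the main (and really the only) subtlety to be bookkeeping around the module structure: verifying $\fm\cdot(A\otimes V) = (\fm A)\otimes V$ and that the isomorphism $(A\otimes V)/(\fm A\otimes V)\cong(A/\fm A)\otimes V$ is compatible with the residual $\bO_\infty$-action, as opposed to merely an abstract vector-space isomorphism. This is routine because tensoring with the fixed vector space $V$ is exact and the $\bO_\infty$-action on $A\otimes V$ is diagonal, so everything in sight is $\bO_\infty$-equivariant; there is no hard estimate or structural input needed. One should also note for the second statement that finite generation of $M$ is not required — the presentation argument works for any $A$-module since arbitrary direct sums of algebraic representations are algebraic.
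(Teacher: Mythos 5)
Your proof is correct and takes essentially the same approach as the paper's: the paper declares the first part ``clear'' (which you spell out via the natural $\bO_\infty$-equivariant splitting $V \hookrightarrow A \otimes V$), and for the second part uses a single surjection $A \otimes V \twoheadrightarrow M$ together with right-exactness of $\wt{\Phi}$ and closure of $\Rep(\bO_\infty)$ under quotients, exactly as you do (your full presentation $A\otimes W \to A\otimes V \to M \to 0$ is more than is needed, but harmless).
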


\begin{proof}
The first part is clear. For the second part, pick a surjection $A \otimes V \to M$ of $A$-modules. Since $\wt{\Phi}$ is right exact, there is an induced surjection $V \to\wt{\Phi}(M)$. As any quotient of an algebraic representation is algebraic, the result follows.
\end{proof}

\begin{lemma} \label{lem:m}
If $I$ is a non-zero ideal of $A$ then $I+\fm=A$.
\end{lemma}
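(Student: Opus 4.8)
The plan is to reduce the claim to a statement about a single large matrix $\GL_n$, where we can invoke ordinary commutative algebra. Recall that $A = \bC[x_{i,j}]$ with $x_{i,j} = e_i e_j$, and $\fm$ is generated by the $x_{i,i}-1$ and the $x_{i,j}$ for $i \ne j$; the quotient $A/\fm \cong \bC$ is the evaluation sending a symmetric form to its value at the standard form, i.e.\ sending $x_{i,j} \mapsto \delta_{i,j}$. So $I + \fm = A$ is equivalent to the assertion that the image of $I$ under the quotient map $A \to A/\fm = \bC$ is nonzero, i.e.\ that some element of $I$ does not vanish at the "identity form."

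First I would pick a nonzero element $f \in I$. It lies in $A(\bC^m)$ for some finite $m$, i.e.\ $f \in \bC[x_{i,j}]_{1 \le i \le j \le m}$, and $f$ is a nonzero polynomial function on the space of symmetric $m \times m$ matrices. The key point is that $I$ is $\GL_\infty$-stable, so it contains the whole $\GL_\infty$-orbit of $f$; in particular it contains $g \cdot f$ for every $g \in \GL_m$ (acting on $\bC^m \subset \bC^\infty$ and fixing the rest). Now $\GL_m$ acts on the space of symmetric $m\times m$ matrices by $g \cdot S = (g^{-1})^{\top} S g^{-1}$ (or whichever convention matches the $e_ie_j$ action), and this action is \emph{transitive on nondegenerate forms}: the identity matrix has a dense orbit. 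Since $f$ is not identically zero, $f$ is nonzero on the (nonempty Zariski-open, hence dense) locus of nondegenerate forms, so there is some nondegenerate $S_0$ with $f(S_0) \ne 0$, and then $g_0 \in \GL_m$ carrying the identity form to $S_0$ gives $(g_0 \cdot f)(\mathrm{Id}) = f(S_0) \ne 0$. Hence $g_0 \cdot f \in I$ has nonzero image in $A/\fm$, so $I + \fm = A$.

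The main obstacle — really the only thing requiring care — is being precise about the $\GL_m$-action and the identification of $A/\fm$ with evaluation at the identity form, and checking transitivity on nondegenerate symmetric forms over $\bC$ (equivalently, that all nondegenerate quadratic forms over $\bC$ are equivalent, which is standard). One should also confirm that the "evaluation at the identity form" description of $A \to A/\fm$ is exactly the map $x_{i,j} \mapsto \delta_{i,j}$ used in \S\ref{sec:modK-groups}; this is immediate from the definition of $\fm$. An alternative packaging of the same argument avoids choosing $S_0$ explicitly: the image of $I$ in $A/\fm = \bC$ is a $\GL_m$-stable ideal of the coordinate ring evaluated along the orbit, but since $A/\fm$ is a field, a nonzero such image is everything; the substance is still the density of the nondegenerate locus together with $\GL_\infty$-stability of $I$. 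Either way the proof is short.
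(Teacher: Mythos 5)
Your proof is correct and takes essentially the same route as the paper: reduce to finite $n$, observe that $\fm$ corresponds to the identity (maximal-rank) form whose $\GL_n$-orbit is dense, and conclude that a proper $\GL_n$-stable closed subset cannot contain it. The paper phrases this via $V(I')$ not containing the dense orbit, while you unwind it concretely by translating a point where a chosen $f \in I$ is nonzero back to the identity form; the two are the same argument.
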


\begin{proof}
Suppose $I$ is a non-zero ideal of $A$. Let $A_n=\Sym(\Sym^2(\bC^n))$, regarded as a subring of $\vert A \vert$. Then $A$ is the union of the $A_n$, and so for $n$ sufficiently large, $I'=I \cap A_n$ is a non-zero $\GL_n$-stable ideal of $A_n$. Of course, $\fm'=\fm \cap A_n$ is a maximal ideal of $A_n$. The scheme $\Spec(A_n)$ is the space of symmetric bilinear forms on $\bC^n$, and $\fm' \in \Spec(A_n)$ represents the sum of squares form, which has maximal rank. Since $V(I')$ is a proper closed $\GL_n$-stable subset of $\Spec(A_n)$, it cannot contain any form of maximal rank (as the orbit of any such form is dense), and so $I' \not\subset \fm'$. It follows that $I \not\subset \fm$, and so $I+\fm=A$.
\end{proof}

\begin{lemma}
If $M$ is a torsion $A$-module then $\wt{\Phi}(M)=0$.
\end{lemma}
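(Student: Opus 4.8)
The plan is to reduce to the finitely generated case and then combine Corollary~\ref{cor:nz-ann} with Lemma~\ref{lem:m}.

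First I would observe that the functor $\wt{\Phi}(-) = (-)/\fm(-)$ is right exact and commutes with filtered colimits. A torsion $A$-module $M$ is the filtered union of its finitely generated $A$-submodules $M_i$, and each $M_i$ is again torsion (any submodule of a torsion module is torsion). Hence $\wt{\Phi}(M) = \varinjlim_i \wt{\Phi}(M_i)$, and it suffices to prove the statement when $M$ is finitely generated.

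So suppose $M$ is finitely generated and torsion, i.e., $M \otimes_A K = 0$. Since $\vert A \vert$ is a domain (it is a polynomial ring), Corollary~\ref{cor:nz-ann} shows that $I := \ann(M)$ is a non-zero ideal of $A$. By Lemma~\ref{lem:m}, $I + \fm = A$, so I can write $1 = a + b$ with $a \in I$ and $b \in \fm$. For any $m \in M$ this gives $m = am + bm = bm \in \fm M$, using $am = 0$. Therefore $M = \fm M$, that is, $\wt{\Phi}(M) = M/\fm M = 0$, as desired.

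There is no serious obstacle here: all the real content is already packaged in Lemma~\ref{lem:m} (which uses that the sum-of-squares form has maximal rank and dense $\GL_n$-orbit) and in Corollary~\ref{cor:nz-ann}/Proposition~\ref{prop:ann}. The only point that needs a moment's care is the reduction to the finitely generated case, which is necessary because Corollary~\ref{cor:nz-ann} carries a finite generation hypothesis; this is handled by the colimit argument above.
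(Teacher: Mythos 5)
Your proof is correct and follows the same route as the paper: reduce to the finitely generated case via the direct-limit compatibility of $\wt{\Phi}$, invoke Corollary~\ref{cor:nz-ann} to get a nonzero annihilator $I$, then Lemma~\ref{lem:m} to get $I+\fm=A$, and conclude $M=\fm M$. The only cosmetic difference is that you spell out $1=a+b$ whereas the paper phrases the last step as $M/\fm M = M \otimes_{A/I}(A/(I+\fm)) = 0$.
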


\begin{proof}
Since $\wt{\Phi}$ commutes with direct limits, it suffices to treat the case where $M$ is finitely generated and torsion. By Corollary~\ref{cor:nz-ann}, $M$ has non-zero annihilator $I$, and $I+\fm=A$ by the Lemma~\ref{lem:m}. Thus
\begin{displaymath}
M/\fm M = M \otimes_{A/I} (A/(I+\fm)) = 0. \qedhere
\end{displaymath}
\end{proof}

\begin{lemma} \label{prop:wtPhi-exact}
The functor $\wt{\Phi}$ is exact.
\end{lemma}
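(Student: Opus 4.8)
The plan is to note that $\widetilde{\Phi}$ is visibly right exact---it is the base-change functor $M \mapsto M \otimes_{\vert A \vert} (\vert A \vert/\fm)$---so the only thing that requires proof is that $\widetilde{\Phi}$ carries injections to injections. Thus, given a short exact sequence $0 \to M' \to M \to M'' \to 0$ in $\Mod_A$, it suffices to show that the induced map $\widetilde{\Phi}(M') \to \widetilde{\Phi}(M)$ is injective (everything here being maps of vector spaces that happen to be $\bO_\infty$-equivariant, since $\fm$ is $H$-stable and, more to the point, $\bO_\infty$-stable).

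The key input is Proposition~\ref{prop:local-at-m}: for every $A$-module $N$, the localization $N_\fm$ is a free $A_\fm$-module. First I would record the elementary fact that $\widetilde{\Phi}$ is compatible with localization at $\fm$: since $N/\fm N$ is killed by $\fm$, it is already supported at the maximal ideal $\fm$ of $\vert A \vert$, so the natural map $N/\fm N \to N_\fm / \fm N_\fm$ is an isomorphism. Next I would localize the short exact sequence at $\fm$; localization is exact, so we obtain a short exact sequence $0 \to M'_\fm \to M_\fm \to M''_\fm \to 0$ of $A_\fm$-modules, all three of which are \emph{free} by Proposition~\ref{prop:local-at-m}. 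In particular $M''_\fm$ is flat, hence $\Tor_1^{A_\fm}(M''_\fm, A_\fm/\fm A_\fm) = 0$, and tensoring the sequence over $A_\fm$ with the residue field $A_\fm/\fm A_\fm \cong \bC$ preserves exactness. This yields the injectivity of $M'_\fm/\fm M'_\fm \to M_\fm/\fm M_\fm$, which by the compatibility observed above is precisely the injectivity of $\widetilde{\Phi}(M') \to \widetilde{\Phi}(M)$, as needed.

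I do not expect a genuine obstacle here: essentially all of the work has already been done in \S\ref{ss:struc-at-m}, and Proposition~\ref{prop:local-at-m} does all the heavy lifting. The only point deserving a moment's care is the compatibility of $\widetilde{\Phi}$ with localization at $\fm$, but this is a standard fact about modules supported at a single maximal ideal, and no new ideas are required.
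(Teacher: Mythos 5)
Your argument is correct and is exactly the intended one: the paper's proof of this lemma is the single sentence ``This follows immediately from Proposition~\ref{prop:local-at-m},'' and your write-up simply unpacks that ``immediately.'' The chain of reasoning---right-exactness is free, $N/\fm N \cong N_\fm/\fm N_\fm$ since the quotient is supported at $\fm$, and freeness of $M''_\fm$ gives vanishing of $\Tor_1$---is precisely how Proposition~\ref{prop:local-at-m} yields exactness.
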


\begin{proof}
This follows immediately from Proposition~\ref{prop:local-at-m}.
\end{proof}

Thus $\wt{\Phi}$ is an exact functor killing $\Mod_A^{\tors}$. It follows that $\wt{\Phi}$ factors through the Serre quotient $\Mod_A/\Mod_A^{\tors}$, which we identify with $\Mod_K$. In other words, there exists an exact functor $\Phi \colon \Mod_K \to \Rep(\bO_{\infty})$, unique up to isomorphism, such that $\wt{\Phi}(M)=\Phi(M \otimes_A K)$. Since $\wt{\Phi}$ is compatible with direct limits, so is $\Phi$.

\subsection{Proof of Theorem~\ref{thm:modK-equiv}}

We now prove that $\Phi$ is an equivalence. We first prove that it is faithful, then full, and finally essentially surjective.

\begin{lemma} \label{lem:faithful}
$\Phi$ is faithful.
\end{lemma}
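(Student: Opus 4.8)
The plan is to show that $\Phi$ is faithful by reducing to the question of whether $\wt{\Phi}$ kills a morphism of $A$-modules. Concretely, let $f \colon M \to N$ be a morphism in $\Mod_K$ with $\Phi(f) = 0$; since $F \colon \Mod_A^{\gen} \to \Mod_K$ is an equivalence (Proposition~\ref{prop:modKequiv}) and $\Mod_A^{\gen}$ is a Serre quotient of $\Mod_A$, I may lift $f$ to an honest morphism $f_0 \colon M_0 \to N_0/N_0'$ of $A$-modules, where $M/M_0$ and $N_0'$ are torsion, and $\wt{\Phi}(M_0) = \Phi(M_0 \otimes_A K) = \Phi(M)$ (torsion modules are killed by $\wt{\Phi}$). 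The hypothesis $\Phi(f) = 0$ then translates into $\wt{\Phi}(f_0) = 0$, i.e. the induced map $M_0/\fm M_0 \to (N_0/N_0')/\fm(N_0/N_0')$ is zero. I want to conclude that $f_0$ becomes zero in $\Mod_K$, equivalently that its image is a torsion $A$-module.

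The key step is the following: \emph{if $g \colon P \to Q$ is a map of $A$-modules with $\wt\Phi(g) = 0$, then $\operatorname{im}(g)$ is torsion.} To see this, localize at $\fm$. By Proposition~\ref{prop:local-at-m}, $P_\fm$ and $Q_\fm$ are free $A_\fm$-modules, and the hypothesis $\wt\Phi(g)=0$ says precisely that $g_\fm$ reduces to zero modulo $\fm A_\fm$; since $Q_\fm$ is free (hence $\fm$-adically separated — or one can just use that $g_\fm(P_\fm) \subseteq \fm Q_\fm$ together with freeness of $Q_\fm$ and Nakayama on the finitely generated submodule generated by finitely many elements), this forces $g_\fm = 0$. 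So $\operatorname{im}(g)_\fm = 0$, i.e. $\operatorname{im}(g)$ is supported away from $\fm$. Now I invoke Lemma~\ref{lem:m}: the annihilator $I$ of $\operatorname{im}(g)$ — more precisely, for $\operatorname{im}(g)$ finitely generated, its annihilator — satisfies $I \not\subseteq \fm$, so $I$ is a non-zero ideal of $A$ (it is $\GL_\infty$-stable by \S\ref{ss:ann}), and hence $\operatorname{im}(g) \otimes_A K = 0$ by Corollary~\ref{cor:nz-ann}. (For the reduction to the finitely generated case: $\wt\Phi$ and $-\otimes_A K$ both commute with direct limits, so it suffices to check torsionness on each finitely generated submodule of $\operatorname{im}(g)$, which is the image of a map out of a finitely generated submodule of $P$.) Applying this with $g = f_0$ shows $\operatorname{im}(f_0)$ is torsion, so $T(f_0) = 0$ in $\Mod_A^{\gen}$, hence $f = 0$ in $\Mod_K$.

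I expect the main obstacle to be the careful bookkeeping in lifting a morphism of $\Mod_K$ to a genuine morphism of $A$-modules and verifying that $\wt\Phi$ of the lift really is the zero map — one has to track how $\wt\Phi$ interacts with the torsion submodules $N_0'$ and the quotient $N_0/N_0'$, using that $\wt\Phi$ is exact (Lemma~\ref{prop:wtPhi-exact}) and kills torsion. The ring-theoretic heart of the argument — the step "$\wt\Phi(g) = 0 \Rightarrow \operatorname{im}(g)$ torsion" — is short once Proposition~\ref{prop:local-at-m} and Lemma~\ref{lem:m} are in hand; the subtlety there is only the Nakayama-type argument needed because $P_\fm$ and $Q_\fm$ need not be finitely generated, which is handled by passing to finitely generated submodules first.
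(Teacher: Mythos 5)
Your high-level architecture is the same as the paper's: realize $f$ as coming from an honest morphism of $A$-modules, show that the localization at $\fm$ of that morphism vanishes, and conclude that its image is torsion so $f = 0$ in $\Mod_K$. But the ring-theoretic heart of your argument — the claim that $\wt\Phi(g) = 0$ forces $g_\fm = 0$ — is not justified by what you write, and the proposed justification is simply false.

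Concretely, $\wt\Phi(g) = 0$ says $g_\fm(P_\fm) \subseteq \fm Q_\fm$. Neither freeness of $Q_\fm$, nor $\fm$-adic separatedness, nor Nakayama lets you conclude $g_\fm = 0$ from that. (Nakayama would require $\operatorname{im}(g_\fm) \subseteq \fm\cdot\operatorname{im}(g_\fm)$, which is much stronger than $\operatorname{im}(g_\fm) \subseteq \fm Q_\fm$; and the trivial example of multiplication by an element of $\fm$ on a free $|A|_\fm$-module already has image in $\fm Q_\fm$ without being zero, showing the commutative-algebra statement you invoke is false in general.) What saves the paper is the \emph{equivariance}, and this is exactly what your argument ignores: the correct deduction uses Proposition~\ref{prop:varphiM}, i.e.\ that the natural comparison map $\varphi_M \colon M \to (M/\fm M \otimes \bC[\ol B])^H$ localizes to an isomorphism at $\fm$. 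One then has the commuting square
\begin{displaymath}
\xymatrix{
M \ar[r]^-{\varphi_M} \ar[d]_g & (M/\fm M \otimes \bC[\ol{B}])^H \ar[d]^{\ol{g} \otimes 1} \\
N \ar[r]^-{\varphi_N} & (N/\fm N \otimes \bC[\ol{B}])^H}
\end{displaymath}
and since $\ol g = \wt\Phi(g) = 0$, the right vertical is zero, so $\varphi_N \circ g = 0$; localizing at $\fm$ and using that $\varphi_N$ becomes an isomorphism gives $g_\fm = 0$. The point is that $\varphi_M$ identifies $M_\fm$ with something built functorially out of $M/\fm M$, so a map that vanishes on the $\fm$-fibre already vanishes on the localization. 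Proposition~\ref{prop:local-at-m} (freeness of $M_\fm$) is a \emph{corollary} of this, but freeness alone is not enough, which is exactly the gap in your argument. (A small additional point: the step $\operatorname{im}(g)_\fm = 0 \Rightarrow \operatorname{im}(g)$ torsion is correct, but citing Corollary~\ref{cor:nz-ann} for it is backwards — that corollary proves the opposite implication; you should instead just observe that a nonzero annihilator, or $\fm$-local vanishing of a finitely generated module, directly forces torsionness since $|A|$ is a domain.)
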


\begin{proof}
Let $f \colon M \to N$ be a map of $A$-modules, and suppose $\wt{\Phi}(f)=0$. The square
\begin{displaymath}
\xymatrix{
M \ar[r]^-{\varphi_M} \ar[d]_f & (M/\fm M \otimes \bC[\ol{B}])^H \ar[d]^{\ol{f} \otimes 1} \\
N \ar[r]^-{\varphi_N} & (N/\fm N \otimes \bC[\ol{B}])^H}
\end{displaymath}
commutes. Since $\wt{\Phi}(f)=\ol{f}=0$, the right map is 0. Since $\varphi_M$ and $\varphi_N$ are isomorphisms after localizing at $\fm$, the induced map $f \colon M_{\fm} \to N_{\fm}$ is 0. This implies that the induced map $f \colon M \otimes_A K \to N \otimes_A K$ is 0, and so $f=0$ in $\Mod_K$. This shows that $\Phi$ is faithful.
\end{proof}

In what follows, we give $\GL_{\infty}$ and $B$ the direct limit topology (thinking of them as the direct limits of $\GL_n$ and $B \cap \GL_n$ in the Zariski topology).

\begin{lemma} \label{lem:V}
Let $M$ be an $A$-module and let $x \in M \otimes_A K$. Then there exists a dense Zariski open subset $V$ of $B$ such that $bx \in M_{\fm}$ for all $b \in V$.
\end{lemma}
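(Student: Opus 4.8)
The plan is to produce the set $V$ explicitly as the nonvanishing locus of a single regular function on $B$, and then to extract its openness and density from facts already established about the map $\varphi_A$.

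First I would clear a denominator: since $\vert A\vert$ is a domain and $M\otimes_A K=(\vert A\vert\setminus\{0\})^{-1}M$, write $x=m/a$ with $m\in M$ and $a\in\vert A\vert$ nonzero. Letting $\GL_{\infty}$ act diagonally on $M\otimes_A K$, one has $bx=(bm)/(ba)$ with $bm\in M$ for every $b\in B$. By Proposition~\ref{prop:local-at-m}, $M_{\fm}$ is a free $A_{\fm}$-module, so the localization map $M_{\fm}\hookrightarrow M\otimes_A K$ is injective; and if $ba\notin\fm$, then $ba$ is a unit in the local ring $A_{\fm}$, whence $bx=(ba)^{-1}(bm)\in M_{\fm}$. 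So it suffices to show that $V:=\{\,b\in B: ba\notin\fm\,\}$ is dense and Zariski open in $B$.

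Next I would identify $V$ via $\varphi_A$. Unwinding the definition of $\varphi_A\colon A\to\bC[\ol{B}]^H\subseteq\bC[\ol{B}]$ from \S\ref{ss:struc-at-m}, for $b\in B$ the value $\varphi_A(a)(b)$ is precisely $ba$ reduced modulo $\fm$, so $\varphi_A(a)(b)\ne 0$ exactly when $ba\notin\fm$. Thus $V$ is the nonvanishing locus in $B$ of the regular function $\varphi_A(a)$, and in particular is Zariski open. For density, recall that $\varphi_A$ is injective (as noted in the proof of Proposition~\ref{prop:phiA}), so $\varphi_A(a)\ne 0$ in the domain $\bC[\ol{B}]$; since the restriction map $\bC[\ol{B}]\to\bC[B]$ is injective, $\varphi_A(a)$ restricts to a nonzero function on $B$, so $V$ is a nonempty principal open subset of the irreducible ind-variety $B=\bigcup_n(B\cap\GL_n)$, hence dense.

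I do not anticipate a genuine obstacle here: the substance is entirely contained in Propositions~\ref{prop:local-at-m} and~\ref{prop:phiA}, which are already available, and the only care needed is the bookkeeping of the diagonal $\GL_{\infty}$-action on $M\otimes_A K$ and the matching of $\varphi_A(a)(b)$ with ``$ba\bmod\fm$''. Should one prefer to avoid $\varphi_A$ altogether, the density of $V$ can instead be obtained by hand: writing $q_0\in\Spec(A)$ for the point cut out by $\fm$, one has $V=\{\,b\in B: b^{-1}q_0\notin V(a)\,\}$, and for $n\gg 0$ the orbit $(B\cap\GL_n)\cdot q_0$ is dense in $\Spec(\Sym(\Sym^2(\bC^n)))$ because its stabilizer $\bO_n(\bC)\cap(B\cap\GL_n)$ is finite, so the orbit has full dimension $\binom{n+1}{2}$ in an irreducible variety; since $V(a)$ is a proper closed subset, the orbit is not contained in it.
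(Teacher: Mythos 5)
Your argument is correct and runs along essentially the same lines as the paper's: both rest on the identification $\varphi_A(a)(b) = ba \bmod \fm$, the nonvanishing of $\varphi_A(a)$ guaranteed by Proposition~\ref{prop:phiA}, and density of nonempty opens in the ind-irreducible $B$. The only difference is a small simplification: you take $V$ to be the principal open $U_{\varphi_A(a)}$ outright, whereas the paper works with the a priori larger set $\{b \in B : bx \in M_\fm\}$ and gives a separate argument (clearing a denominator of $bx$) for its openness; since the lemma only demands \emph{some} dense open $V$, your shortcut is entirely legitimate.
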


\begin{proof}
Given $h \in \bC[B]$, let $U_h = \{ b \in B \mid h(b) \ne 0\}$ be the corresponding Zariski open subset of $B$. Let $V = \{b \in B \mid b x  \in M_{\fm}\}$. We can find nonzero $a \in A$ such that $ax \in M$. Note that $b \in U_{\phi_A(a)}$ if and only if $b a \notin \fm$; since $\phi_A(a)$ is not the zero function by Proposition~\ref{prop:phiA}, we can find such a $b$. Then $bx \in M_\fm$ and so $V \ne \emptyset$.

We claim that $V$ is open. Suppose $b \in V$ and write $b x = \sum_i m_i \otimes (f_i/c)$ with $m_i \in M$, $f_i \in A$, and $c \in A \setminus \fm$. Then $1 \in U_{\phi_A(c)}$ and $b' b x \in M_\fm$ for each $b' \in U_{\phi_A(c)}$. So $U_{\phi_A(c)} b \subseteq V$, showing that $V$ is open. 
Finally, since $B$ is a directed union of irreducible spaces, a nonempty open subset, like $V$, is dense.
\end{proof}

\begin{lemma} \label{lem:open}
Let $M$ be an $A$-module, and let $x \in M \otimes_A K$. Suppose that there exists a dense Zariski open subset $U$ of $B$ such that for all $b \in U$ we have $bx \in M_{\fm}$ and $\ol{bx}=0$, where the overline denotes reduction mod $\fm$. Then $x=0$.
\end{lemma}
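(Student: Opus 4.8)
The plan is to reduce to a genuine element of $M$ by clearing denominators, to interpret the hypothesis as the vanishing of a polynomial function on $\ol{B}$, and then to combine the density of $U$ with the fact (Proposition~\ref{prop:varphiM}) that $\varphi_M$ becomes an isomorphism after localizing at $\fm$.

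First I would clear denominators. Since $\vert A\vert$ is a domain and $K=\Frac(\vert A\vert)$, there is a nonzero $a\in A$ such that $ax$ lies in the image of $M$ in $M\otimes_A K$; choose $m\in M$ mapping to $ax$. Because $a$ is a unit of $K$ and $M\otimes_A K$ is a $K$-module, it will suffice to show that the image of $m$ in $M\otimes_A K$ is zero.

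Next I would analyze $\varphi_M(m)$. Recall that $\varphi_M(m)\in (M/\fm M\otimes \bC[\ol{B}])^H$ is, by construction, the polynomial function on $\ol{B}$ sending $b$ to the reduction $\ol{bm}$ of $bm$ modulo $\fm$. Fix $b\in U$. Using the module identity $g(av)=(ga)(gv)$, the image of $bm\in M$ in $M\otimes_A K$ equals $(ba)(bx)$, with $ba\in A$ since $A$ is $\GL_{\infty}$-stable; the hypothesis $bx\in M_{\fm}$ with $\ol{bx}=0$ says precisely that $bx\in \fm M_{\fm}$, hence $(ba)(bx)\in \fm M_{\fm}$. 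As $M_{\fm}$ is a free $A_{\fm}$-module (Proposition~\ref{prop:local-at-m}) it injects into $M\otimes_A K$, so the image of $bm$ in $M_{\fm}$ lies in $\fm M_{\fm}$; and since $M/\fm M$ is a $\bC$-vector space, the map $M/\fm M\to M_{\fm}/\fm M_{\fm}$ is an isomorphism, so $\ol{bm}=0$ in $M/\fm M$. Thus $\varphi_M(m)$ vanishes at every point of $U$.

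Finally I would conclude by density. Expanding $\varphi_M(m)$ along a basis of $M/\fm M$ writes it in terms of finitely many elements of $\bC[\ol{B}]=\bigcup_n \bC[b_{i,j}]_{i\le j\le n}$, each involving only finitely many of the variables $b_{i,j}$ and vanishing on $U$; since $U$ is dense in $B$ and hence in $\ol{B}$, and each $\bC[b_{i,j}]_{i\le j\le n}$ is a domain, all of these polynomials are zero, so $\varphi_M(m)=0$. Applying Proposition~\ref{prop:varphiM}, which says $(\varphi_M)_{\fm}$ is an isomorphism, we get that $m$ maps to $0$ in $M_{\fm}$, hence to $0$ in $M\otimes_A K$, and therefore $x=0$. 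The step that needs real care is exactly this density argument: one must ensure that vanishing of $\varphi_M(m)$ on the open set $U$ forces $\varphi_M(m)=0$ as an element of $M/\fm M\otimes\bC[\ol{B}]$, which comes down to the observation that a polynomial vanishing on a nonempty open subset of the directed union of irreducible spaces $B=\bigcup_n (B\cap\GL_n)$ must vanish identically; the remaining bookkeeping with localizations at $\fm$ is routine.
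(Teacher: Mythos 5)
Your proof is correct and follows the same route as the paper's: reduce to an element of $M$ by clearing denominators, observe that $\varphi_M$ gives a polynomial function on $\ol{B}$ vanishing on the dense set $U$ and hence identically, and then invoke the injectivity of $(\varphi_M)_\fm$ (Proposition~\ref{prop:varphiM}). You spell out two points the paper glosses over — that the hypothesis passes from $x$ to $ax$ (using Proposition~\ref{prop:local-at-m} to embed $M_\fm$ into $M\otimes_A K$) and the finitely-many-variables reduction in the density argument — but the underlying argument is the same.
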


\begin{proof}
Replacing $x$ with $ax$, for an appropriate $a \in A$, it suffices to treat the case $x \in M$. Then $b \mapsto \ol{bx}$ defines a function $B \to M/\fm M$ which is continuous for the Zariski topology. The hypothesis implies that it vanishes on a dense subset of $B$, and therefore it vanishes on all of $B$. So $\varphi_M(x)=0$, and so $x=0$ since $\varphi_M$ is injective after localizing at $\fm$.
\end{proof}

\begin{lemma} \label{lem:iwasawa}
Let $U$ be a dense Zariski open subset of $B$. Then for all $g \in \GL_{\infty}$ the set $\bO_{\infty} U g^{-1} \cap B$ contains a dense Zariski open subset of $B$.
\end{lemma}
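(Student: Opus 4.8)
The plan is to show that $\bO_\infty U g^{-1}\cap B$ is in fact a \emph{nonempty open} subset of $B$; since $B$ is a directed union of irreducible varieties, such a subset is automatically dense (this is the argument ending the proof of Lemma~\ref{lem:V}), so that will finish the proof. First I would reduce to the case that $U$ is stable under the finite-support sign group $H=B\cap\bO_\infty$: since $H\subseteq\bO_\infty$, replacing $U$ by $\bigcup_{h\in H}hU$ (still dense open in $B$) does not change $\bO_\infty U$, hence does not change $\bO_\infty U g^{-1}\cap B$. From now on $U$ is $H$-invariant. Write $B_n=B\cap\GL_n$ and $H_n=B_n\cap\bO_n$, a finite group, so $B=\bigcup_n B_n$, $\bO_\infty=\bigcup_n\bO_n$, $\GL_\infty=\bigcup_n\GL_n$.

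The heart of the matter is to prove that $\bO_\infty U$ is open in $\GL_\infty$, i.e. that $\bO_\infty U\cap\GL_n$ is open in $\GL_n$ for every $n$. I would first record a Gram--Schmidt computation: setting $\Omega_n=\bO_n B_n$, an element $x\in\GL_n$ lies in $\Omega_n$ if and only if $x^{\top}x=\beta^{\top}\beta$ for some invertible upper-triangular $\beta$, if and only if every leading principal minor of $x^{\top}x$ is nonzero; thus $\Omega_n$ is a dense open subset of $\GL_n$. Running the same computation on a factorization $x=k\beta$ with $k\in\bO_M$, $\beta\in B_M$ shows moreover that $\bO_\infty B\cap\GL_n=\Omega_n$ for all $n$, so $\Omega:=\bO_\infty B$ is open in $\GL_\infty$. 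Next, the multiplication map $\mu_n\colon\bO_n\times B_n\to\Omega_n$ has fibers equal to the orbits of the free action $h\cdot(k,\beta)=(kh^{-1},h\beta)$ of the finite group $H_n$, so it is a quotient by a free finite group action, in particular an open map. Finally I would check that $\bO_\infty U\cap\GL_n=\bO_n(U\cap B_n)$: given $x=ku\in\GL_n$ with $k\in\bO_M$, $u\in U$, use $\bO_\infty B\cap\GL_n=\Omega_n$ to write $x=k_0b_0$ with $k_0\in\bO_n$, $b_0\in B_n$; then $b_0u^{-1}=k_0^{-1}k$ is simultaneously upper-triangular and orthogonal, hence a sign matrix in $H_M\subseteq H$, so $b_0\in HU=U$ by $H$-invariance. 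Consequently $\bO_\infty U\cap\GL_n=\mu_n\big(\bO_n\times(U\cap B_n)\big)$ is open in $\Omega_n\subseteq\GL_n$, and so $\bO_\infty U$, hence $\bO_\infty U g^{-1}$, hence $\bO_\infty U g^{-1}\cap B$, is open.

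It then remains to show $\bO_\infty U g^{-1}\cap B\neq\emptyset$, equivalently $g\in B\bO_\infty U$. Choose $m$ with $g\in\GL_m$ and $U\cap B_m\neq\emptyset$, so $U\cap B_m$ is dense open in $B_m$. The Bruhat decomposition expresses $\GL_m$ as the union of the cells $B_m w B_m$ with $w$ ranging over permutation matrices, and permutation matrices are orthogonal; hence $\GL_m=B_m\bO_m B_m$, so $\nu\colon B_m\times\bO_m\times B_m\to\GL_m$, $(\beta_1,k,\beta_2)\mapsto\beta_1k\beta_2$, is surjective. The fiber $\nu^{-1}(g)$ is nonempty, and its image under the third projection is $\{\beta_2\in B_m : g\beta_2^{-1}\in B_m\bO_m\}$, the preimage of the dense open set $(\bO_m B_m)^{-1}$ under the morphism $\beta_2\mapsto g\beta_2^{-1}$, hence a nonempty --- so dense --- open subset of the irreducible variety $B_m$. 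It therefore meets $U\cap B_m$: there exist $\beta_1\in B_m$, $k\in\bO_m$, $\beta_2\in U\cap B_m$ with $\beta_1k\beta_2=g$, so $\beta_1^{-1}g=k\beta_2\in\bO_\infty U$ and $\beta_1^{-1}\in\bO_\infty U g^{-1}\cap B$.

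I expect the main obstacle to be the openness step. The naive identity $\bO_\infty U\cap\GL_n=\bO_n(U\cap B_n)$ is genuinely false for general $U$ --- an orthogonal matrix can multiply an upper-triangular one into $\GL_n$ with neither factor lying at level $n$ --- so one really needs the passage to an $H$-invariant $U$, together with the identity $\bO_\infty B\cap\GL_n=\Omega_n$ and the sign-group bookkeeping, in order to control these intersections in the direct-limit topology. The remaining ingredients (Bruhat decomposition, Gram--Schmidt, openness of quotients by finite groups, and ``nonempty open $\Rightarrow$ dense'' in a directed union of irreducibles) are standard.
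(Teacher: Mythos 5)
Your proof is correct, though it takes a noticeably more explicit route than the paper's. The paper's argument is soft: the fibers of the multiplication map $\bO_n\times U_n\to\GL_n$ (with $U_n=U\cap B_n$) are finite, so a dimension count shows the image is a dense constructible subset of $\GL_n$; the interior of this image is a dense open, $\bO_n$-stable subset contained in $\bO_n U_n$, and passing to the limit and translating by $g^{-1}$ yields a dense open $\bO_\infty$-stable set $V\subseteq\bO_\infty U g^{-1}$. A parallel dimension count makes $\bO_\infty B$ contain a dense open subset of $\GL_\infty$, so $V$ meets some coset $hB$; since $V$ is $\bO_\infty$-stable, $V\cap B$ is nonempty open in $B$, hence dense. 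You replace the constructibility step with two concrete computations --- Gram--Schmidt, identifying $\Omega_n=\bO_n B_n$ by nonvanishing of the leading principal minors of $x^\top x$, and the Bruhat decomposition $\GL_m=B_m\bO_m B_m$ for the nonemptiness step --- and in exchange you get a strictly stronger conclusion, namely that $\bO_\infty Ug^{-1}\cap B$ is itself open, not merely contains a dense open subset. You are also right that the level-$n$ identity $\bO_\infty U\cap\GL_n=\bO_n(U\cap B_n)$ genuinely requires the passage to $H$-invariant $U$; the paper sidesteps this subtlety by working with the $\bO_\infty$-stable interior from the start. The one assertion you should flesh out is that $\mu_n\colon\bO_n\times B_n\to\Omega_n$ is open: the cleanest justification is miracle flatness (source and target are smooth of the same dimension $n^2$ and the fibers are finite, so $\mu_n$ is flat and hence open), or one can note that the induced map $(\bO_n\times B_n)/H_n\to\Omega_n$ is a bijective morphism of normal varieties in characteristic zero and hence an isomorphism, so $\mu_n$ is a quotient map and thus open.
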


\begin{proof}
Pick $g \in \GL_\infty$; then $g \in \GL_n$ for $n$ large enough. Since $\bO_n \cap U_n$ is a finite set, the multiplication map $\bO_n \times U_n \to \GL_n$ has dense image (by a dimension count). Since it is also constructible, it contains a dense open subset which we may assume is closed under multiplication by $\bO_n$. In particular, we conclude that $\bO_{\infty} U g^{-1}$ contains a Zariski dense open subset $V$ such that $\bO_{\infty} V = V$. By a similar argument $\bO_{\infty} B$ contains a dense open subset of $\GL_{\infty}$. This implies that $V \cap \bO_{\infty} B$ is nonempty and hence there exists $h \in \bO_{\infty}$ such that $V \cap h B$ is nonempty. Multiplying on the left by $h^{-1}$ shows that $V \cap B$ is a nonempty open subset of $B$. Since $B$ is a directed union of irreducible spaces, we conclude that $V \cap B$ is a dense open subset of $B$.
\end{proof}

We now begin the proof of fullness. Let $M$ and $N$ be torsion-free $A$-modules and let $\ol{f} \colon M/\fm M \to N/\fm N$ be a map of $\bO_{\infty}$ representations. The diagram in Lemma~\ref{lem:faithful} allows us to define a map $f_{\fm} \colon M_{\fm} \to N_{\fm}$, and this induces a map $f \colon M \otimes_A K \to N \otimes_A K$ a $\vert K \vert$-linear map.  By definition, the map $f_{\fm}$ is characterized as follows: if $x \in M_{\fm}$ and $y \in N_{\fm}$ then $y=f_{\fm}(x)$ if and only if $\ol{f}(\ol{bx})=\ol{by}$ for all $b \in B$, where overlines denote reduction modulo $\fm$. Using Lemma~\ref{lem:open}, we can say more: if $\ol{f}(\ol{bx})=\ol{by}$ for all $b$ in some dense Zariski open subset $U \subset B$ then $y=f_{\fm}(x)$. Indeed, putting $y'=f_{\fm}(x)$ we have $\ol{f}(\ol{bx})=\ol{by'}$ for all $b \in B$, and so $\ol{by}=\ol{by'}$ for all $b \in U$, and so $y=y'$ by the lemma. We now give  a similar characterization for $f$.

\begin{lemma} \label{lem:charf}
Let $x \in M \otimes_A K$ and $y \in N \otimes_A K$. Then $y=f(x)$ if and only if the following condition holds:
\begin{itemize}
\item[$(\ast)$] There exists a dense Zariski open dense subset $U$ of $B$ such that for all $b \in U$ we have $bx \in M_{\fm}$ and $by \in N_{\fm}$ and $\ol{f}(\ol{bx})=\ol{by}$.
\end{itemize}
\end{lemma}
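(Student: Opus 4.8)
The plan is to prove both implications of the characterization of $f(x)$ by reducing everything to the already-established facts about $f_{\fm}$ and to Lemma~\ref{lem:V} and Lemma~\ref{lem:open}. Throughout, recall that $f \colon M \otimes_A K \to N \otimes_A K$ is defined by extending $f_{\fm}$ $\vert K \vert$-linearly, and that $f_{\fm}$ itself is characterized by the property that $y = f_{\fm}(x)$ (for $x \in M_{\fm}$, $y \in N_{\fm}$) if and only if $\ol{f}(\ol{bx}) = \ol{by}$ for all $b$ in some dense open $U \subset B$ (as explained in the paragraph preceding the lemma).

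First I would prove the forward implication: suppose $y = f(x)$. By Lemma~\ref{lem:V}, there are dense Zariski open subsets $V_1, V_2 \subset B$ with $bx \in M_{\fm}$ for $b \in V_1$ and $by \in N_{\fm}$ for $b \in V_2$; since both $f$ and the $B$-action are $\vert K \vert$-linear and $f$ restricts to $f_{\fm}$ on $M_{\fm}$, for $b \in V_1$ we have $f_{\fm}(bx) = b f(x) = by$. Fixing one such $b_0$, the characterization of $f_{\fm}$ gives a dense open $U' \subset B$ with $\ol{f}(\ol{b' b_0 x}) = \ol{b' b_0 y}$ for all $b' \in U'$. Then $U = (U' b_0 \cap V_1 \cap V_2)$ is a dense open subset of $B$ (a finite intersection of dense opens in the directed union of irreducibles $B$ is dense open), and by construction every $b \in U$ satisfies $bx \in M_{\fm}$, $by \in N_{\fm}$, and $\ol{f}(\ol{bx}) = \ol{by}$. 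This establishes $(\ast)$.

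Conversely, suppose $(\ast)$ holds for some dense open $U$. Set $y' = f(x)$. By the forward implication just proved, $(\ast)$ also holds for $y'$ in place of $y$, say on a dense open $U'$. On the dense open set $U \cap U'$ we then have, for every $b$, that $\ol{by} = \ol{f}(\ol{bx}) = \ol{by'}$, so $\ol{b(y - y')} = 0$, and also $b(y - y') \in N_{\fm}$. Applying Lemma~\ref{lem:open} to the element $y - y' \in N \otimes_A K$ yields $y - y' = 0$, i.e. $y = f(x)$, as desired.

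I do not expect any serious obstacle here; the lemma is essentially a bookkeeping consequence of the earlier results. The only point requiring a little care is the density claims — specifically that the intersection of finitely many dense Zariski open subsets of $B$ is again dense, and that translating a dense open set by an element of $B$ stays dense open — but both follow immediately from $B$ being a directed union of irreducible varieties (each nonempty open in an irreducible space being dense) together with continuity of the translation maps, exactly as invoked in the proofs of Lemmas~\ref{lem:V}--\ref{lem:iwasawa}. The mild subtlety worth flagging is that in the forward direction one must pass through a single fixed $b_0 \in V_1$ in order to apply the characterization of $f_{\fm}$ (which a priori produces its dense open set depending on that choice), and then translate back; this is why the argument is phrased via $U' b_0$ rather than directly.
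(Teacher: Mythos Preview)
There is a genuine gap in the forward direction. You assert that ``both $f$ and the $B$-action are $\vert K\vert$-linear'' and deduce $f_{\fm}(bx)=bf(x)$, but the $B$-action on $M\otimes_A K$ is \emph{not} $\vert K\vert$-linear: for $b\in B$, $k\in K$, $m\in M$ one has $b(km)=(bk)(bm)$, not $k(bm)$. So from $\vert K\vert$-linearity of $f$ alone you cannot conclude $f(bx)=bf(x)$. This is not a minor slip: what you are invoking is a form of $B$-equivariance of $f$, and equivariance of $f$ (for $\GL_\infty$) is precisely what the \emph{next} lemma is devoted to proving, using the present lemma as input.

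The paper sidesteps this entirely by clearing denominators. One picks $a\in A$ nonzero with $ax\in M$, so that $z:=f(ax)=af(x)=ay$ by honest $\vert K\vert$-linearity; since $ax\in M\subset M_{\fm}$ one has $z=f_{\fm}(ax)$ and hence $\ol{f}(\ol{b(ax)})=\ol{bz}$ for all $b\in B$. On a dense open where $ba$ is a unit in $A_{\fm}$ and $bx\in M_{\fm}$, $by\in N_{\fm}$, one factors $\ol{b(ax)}=\ol{ba}\cdot\ol{bx}$ and $\ol{bz}=\ol{ba}\cdot\ol{by}$ and cancels $\ol{ba}\ne 0$ to obtain $(\ast)$. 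No equivariance of $f$ is used. (Your approach is salvageable: one can prove directly from the characterization of $f_{\fm}$ that if $x'\in M$ and $bf_{\fm}(x')\in N_{\fm}$ then $f_{\fm}(bx')=bf_{\fm}(x')$, by substituting $b'b$ for $b'$ in the defining identity; this then gives what you need. But that argument must be supplied --- the justification you wrote is incorrect.) Your backward direction is fine once the forward direction is fixed, and in fact is slightly slicker than the paper's, which redoes the denominator-clearing rather than invoking the forward implication.
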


\begin{proof}
Suppose $y=f(x)$. Pick non-zero $a \in A$ such that $ax \in M$. Let $V$ be a dense Zariski open subset of $B$ such that $ba \in A_{\fm}$ and $ba^{-1} \in A_{\fm}$ and $bx \in M_{\fm}$ and $by \in N_{\fm}$ for all $b \in V$ (Lemma~\ref{lem:V}). Put $z=f(ax)$. Since $ax \in M_{\fm}$ we have $z=f_{\fm}(ax)$, and so $\ol{bz}=\ol{f}(\ol{bax})$ for all $b \in B$. For $b \in V$ we have $\ol{f}(\ol{bax})=\ol{ba} \cdot \ol{f}(\ol{bx})$ and $\ol{bz}=\ol{bay}=\ol{ba} \cdot \ol{by}$, and so $\ol{ba} \cdot \ol{by}=\ol{ba} \cdot \ol{f}(\ol{bx})$. Since $ba^{-1} \in A_{\fm}$, it follows that $\ol{ba} \ne 0$, and so $\ol{by}=\ol{f}(\ol{bx})$. So $(\ast)$ holds.

Now suppose $(\ast)$ holds. Let $a$ be a non-zero element of $A$ such that $ax \in M$. Let $z=f(ax)$. Since $ax \in M$, we have $z=f_{\fm}(ax)$, and so $\ol{bz}=\ol{f}(\ol{bax})$ for all $b \in B$. Let $V$ be a dense Zariski open subset of $B$ such that $ba \in A_{\fm}$ for all $b \in V$ (Lemma~\ref{lem:V}). Then for $b \in U \cap V$ we have $\ol{bz}=\ol{f}(\ol{bax})=\ol{ba} \cdot \ol{f}(\ol{bx})=\ol{ba} \cdot \ol{by}=\ol{bay}$. It follows from Lemma~\ref{lem:open} that $z=ay$, and so $ay=f(ax)$. Since $f$ is $K$-linear, we conclude $y=f(x)$.
\end{proof}

\begin{lemma}
The map $f \colon M \otimes_A K \to N \otimes_A K$ is $\GL_{\infty}$-equivariant.
\end{lemma}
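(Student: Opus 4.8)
The plan is to prove $\GL_{\infty}$-equivariance by checking, for each $g \in \GL_{\infty}$ and each $x \in M \otimes_A K$, that the pair $(gx,\, gf(x))$ satisfies condition $(\ast)$ of Lemma~\ref{lem:charf}; since that lemma characterizes $f$ by $(\ast)$, this immediately yields $f(gx) = gf(x)$. So I would set $y = f(x)$ and take the dense Zariski open subset $U \subseteq B$ provided by $(\ast)$ for the pair $(x,y)$, so that $bx \in M_{\fm}$, $by \in N_{\fm}$ and $\ol{f}(\ol{bx}) = \ol{by}$ for all $b \in U$.

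The obstruction is that one cannot simply translate $U$ by $g$, because $g$ need not lie in $B$; the rescue is Lemma~\ref{lem:iwasawa}. Applying it to $U$ and $g$ produces a dense Zariski open $U' \subseteq B$ with $U' \subseteq \bO_{\infty} U g^{-1} \cap B$. Now fix $b \in U'$ and write $b = h u g^{-1}$ with $h \in \bO_{\infty}$ and $u \in U$, so $bg = hu$. Here I would use that $\fm$ is $\bO_{\infty}$-stable: $h$ induces automorphisms of $A_{\fm}$, $M_{\fm}$, $N_{\fm}$, and the maps it induces on $M/\fm M$ and $N/\fm N$ are exactly the actions of $h$ in the $\bO_{\infty}$-representations $\wt{\Phi}(M)$, $\wt{\Phi}(N)$; in particular $\ol{hm} = h\ol{m}$. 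Hence $bgx = h(ux) \in M_{\fm}$ (as $ux \in M_{\fm}$ since $u \in U$), $bgy = h(uy) \in N_{\fm}$, and
\[
\ol{f}(\ol{bgx}) = \ol{f}(h\,\ol{ux}) = h\,\ol{f}(\ol{ux}) = h\,\ol{uy} = \ol{huy} = \ol{bgy},
\]
where the second equality uses that $\ol{f}$ is a map of $\bO_{\infty}$-representations and the third uses $u \in U$. Thus $(\ast)$ holds for $(gx, gy)$ with the dense open set $U'$, and Lemma~\ref{lem:charf} gives $f(gx) = gy = gf(x)$.

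The genuinely essential ingredients are therefore Lemma~\ref{lem:iwasawa} (to obtain a single dense open subset of $B$ over which the factorization $b = hug^{-1}$ is available) and the compatibility of the $\bO_{\infty}$-action on localizations/reductions with the representation structure that $\ol{f}$ respects; the remaining points — that $h$ preserves $\fm$ and hence acts on $M_{\fm}$, $N_{\fm}$, and that reduction mod $\fm$ intertwines this with $\wt{\Phi}$ — are routine and I would only state them. Once equivariance is established, combining it with the $\lvert K\rvert$-linearity of $f$ noted just before Lemma~\ref{lem:charf} shows $f$ is a morphism of $K$-modules, which is the role this lemma plays in the proof of fullness of $\Phi$.
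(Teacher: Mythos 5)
Your proof is correct and is essentially identical to the paper's argument: both take the dense open $U$ from condition $(\ast)$ for the pair $(x,y)$, invoke Lemma~\ref{lem:iwasawa} to pass to a dense open portion of $\bO_{\infty}Ug^{-1} \cap B$, factor $bg = h u$ with $h \in \bO_{\infty}$ and $u \in U$, and use $\bO_{\infty}$-stability of $\fm$ together with $\bO_{\infty}$-equivariance of $\ol f$ to verify $(\ast)$ for $(gx, gy)$ before applying Lemma~\ref{lem:charf}. The only cosmetic difference is that you extract a dense open subset of $\bO_{\infty}Ug^{-1} \cap B$ up front, while the paper works directly with that set and notes it contains a dense open when closing the argument.
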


\begin{proof}
Let $x \in M \otimes_A K$ and let $y=f(x)$ and let $g \in \GL_{\infty}$. We must show $gy=f(gx)$. Let $U$ be a dense Zariski open subset of $B$ such that $bx \in M_{\fm}$ and $by \in N_{\fm}$ and $\ol{by}=\ol{f}(\ol{bx})$ for all $b \in U$ (Lemma~\ref{lem:charf}). Let $V=\bO_{\infty}Ug^{-1} \cap B$, and let $b \in V$. We can then write $bg=h'b'$ with $h' \in \bO_{\infty}$ and $b' \in U$. We have $bgx=h'b'x \in M_{\fm}$ since $b'x \in M_{\fm}$ and $M_{\fm}$ is stable by $\bO_{\infty}$. Similarly, $bgy \in N_{\fm}$. Furthermore,
\begin{displaymath}
\ol{f}(\ol{bgx})=\ol{f}(\ol{h'b'x})=h' \ol{f}(\ol{b'x})=h' \ol{b'y}=\ol{bgy}.
\end{displaymath}
This is the only place where we use the $\bO_{\infty}$-equivariance of $\ol{f}$. Since this holds for all $b \in V$ and $V$ contains a dense Zariski open of $B$ (Lemma~\ref{lem:iwasawa}), it follows that $gy=f(gx)$ (Lemma~\ref{lem:charf}). This completes the proof.
\end{proof}

We have shown that $\Phi$ is full. The following lemma completes the proof of the theorem.

\begin{lemma}
$\Phi$ is essentially surjective.
\end{lemma}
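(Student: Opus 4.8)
The plan is to leverage what has already been established --- that $\Phi$ is exact, full, and faithful --- and to deduce essential surjectivity by a soft argument. Write $\mathcal{E}\subseteq\Rep(\bO_{\infty})$ for the essential image of $\Phi$. The first thing I would record is that $\mathcal{E}$ is closed under taking kernels of morphisms: given a morphism $d\colon\Phi(M)\to\Phi(N)$ in $\Rep(\bO_{\infty})$, fullness produces $\delta\colon M\to N$ in $\Mod_K$ with $\Phi(\delta)=d$, and exactness of $\Phi$ yields $\Phi(\ker\delta)=\ker d$. (Closure under cokernels and direct summands holds for the same reasons, but kernels are all we will need.)

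Next I would exhibit enough objects in $\mathcal{E}$. For any polynomial representation $V$ of $\GL_{\infty}$ we have $\wt{\Phi}(A\otimes V)\cong V|_{\bO_{\infty}}$ by the first lemma of \S\ref{ss:phi-defn}; since $\wt{\Phi}(P)=\Phi(P\otimes_A K)$ and $(A\otimes V)\otimes_A K=K\otimes V$, this says $\Phi(K\otimes V)\cong V|_{\bO_{\infty}}$. Hence $V|_{\bO_{\infty}}\in\mathcal{E}$ for \emph{every} polynomial representation $V$ of $\GL_{\infty}$; in particular, taking $V=\bigoplus_i(\bC^{\infty})^{\otimes k_i}$ for any multiset of exponents, every direct sum of tensor powers of $\bC^{\infty}$ lies in $\mathcal{E}$.

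To finish, I would invoke the structure theory of $\Rep(\bO_{\infty})$ from \cite{infrank}: this category is locally finite (hence locally Noetherian), has injective envelopes, and every indecomposable injective is a direct summand of some $(\bC^{\infty})^{\otimes n}$. Since injective envelopes commute with arbitrary direct sums in a locally Noetherian Grothendieck category, it follows that every object $W$ of $\Rep(\bO_{\infty})$ embeds into a direct sum of tensor powers of $\bC^{\infty}$ (embed $W$ into $E(W)$, a direct sum of indecomposable injectives, and embed each summand into a tensor power). So pick an embedding $W\hookrightarrow P_0$ with $P_0$ a direct sum of tensor powers, and similarly an embedding $P_0/W\hookrightarrow P_1$; composing gives an exact sequence $0\to W\to P_0\xrightarrow{d}P_1$, that is, $W=\ker d$. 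Since $P_0,P_1\in\mathcal{E}$ and $\mathcal{E}$ is closed under kernels, we conclude $W\in\mathcal{E}$. This shows $\Phi$ is essentially surjective, and together with the fullness and faithfulness already proved, completes the proof of Theorem~\ref{thm:modK-equiv}.

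The only genuinely non-formal ingredient is the assertion that every algebraic representation of $\bO_{\infty}$ embeds into a direct sum of tensor powers, which rests entirely on the injective-object theory of \cite{infrank}; everything else is a formal consequence of exactness and fullness of $\Phi$ together with the identity $\wt{\Phi}(A\otimes V)\cong V|_{\bO_{\infty}}$. (One could alternatively try to work directly from the definition of ``algebraic,'' which exhibits $W$ as a subquotient of a direct sum of tensor powers and hence as a subobject of a \emph{quotient} of one; but realizing that quotient as $\Phi$ of something requires realizing the relevant subobject, so the injective-envelope route is cleaner.)
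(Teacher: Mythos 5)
Your argument is correct and follows essentially the same route as the paper: both realize an arbitrary algebraic representation as the kernel of a map between restrictions to $\bO_{\infty}$ of polynomial $\GL_{\infty}$-representations (which lie in the essential image because $\widetilde{\Phi}(A \otimes V) \cong V|_{\bO_\infty}$), and then invoke fullness and exactness of $\Phi$. The only minor variation is that the paper first reduces to finitely generated objects using compatibility of $\Phi$ with direct limits and then cites \cite[\S 4]{infrank} for the kernel presentation, whereas you handle all objects at once via injective envelopes in the locally Noetherian category $\Rep(\bO_{\infty})$.
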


\begin{proof}
Since $\Phi$ is full and compatible with direct limits, it suffices to show that all finitely generated objects of $\Rep(\bO_{\infty})$ are in the essential image of $\Phi$. Thus let $M$ be such an object. By the results of \cite[\S 4]{infrank}, we can realize $M$ as the kernel of a map $f \colon I \to J$, where $I$ and $J$ are injective objects of $\Rep(\bO_{\infty})$. Every injective object of $\Rep(\bO_{\infty})$ is the restriction to $\bO_{\infty}$ of a polynomial representation of $\GL_{\infty}$. Thus $I=\Phi(M)$ and $J=\Phi(N)$ for some $M$ and $N$ in $\Mod_K$, and $f=\Phi(f')$ for some $f' \colon M \to N$ in $\Mod_K$. The exactness of $\Phi$ shows that $M \cong \Phi(\ker(f'))$, and so $\Phi$ is essentially surjective.
\end{proof}

\subsection{The other two cases} \label{ss:other}

Everything in this section can be adapted to $\Sym(\lw^2(\bC^{\infty}))$. This is straightforward (and not even logically necessary, per Remark~\ref{rmk:transp}), so we do not comment further on it.

Everything can also be adapted to the bivariate tca $A=\Sym(\bC^{\infty} \otimes \bC^{\infty})$. We will make a few comments on how this goes. First, we state the analogs of Theorem~\ref{thm:modK-equiv} and Corollary~\ref{cor:list}. A representation of $\GL_{\infty}$ is {\bf algebraic} if it appears as a subquotient of a (possibly infinite) direct sum of representations of the form $(\bC^{\infty})^{\otimes a} \otimes (\bC^{\infty}_*)^{\otimes b}$. Here $\bC^{\infty}_*$ is the restricted dual of $\bC^{\infty}$, defined as the span of the dual basis $\{e_i^*\}$ in the usual dual space $(\bC^{\infty})^*$. One easily checks that $\bC^{\infty}_*$ is indeed a representation of $\GL_{\infty}$. We write $\Rep(\GL_{\infty})$ for the category of algebraic representations. This was also studied in \cite{infrank}.

By the ``twisted diagonal embedding'' $\GL_{\infty} \to \GL_{\infty} \times \GL_{\infty}$, we mean the embedding given by $g \mapsto (g, {}^tg^{-1})$. We note that the algebraic representations of $\GL_{\infty}$ are exactly those appearing as a subquotient of the restriction of a polynomial representation from $\GL_{\infty} \times \GL_{\infty}$ via the twisted diagonal embedding.

We identify $A$ with $\bC[x_{i,j}]$ in the obvious manner, and let $\fm \subset \vert A \vert$ be the ideal generated by $x_{i,i}-1$ and $x_{i,j}$ for $i \ne j$. This ideal is stable under the twisted diagonal $\GL_{\infty}$. For an $A$-module $M$, define $\wt{\Phi}(M)=M/\fm M$. This is naturally a representation of $\GL_{\infty}$.

\begin{theorem} \label{thm:modK-equiv2}
The functor $\wt{\Phi}$ induces an equivalence $\Phi \colon \Mod_K \to \Rep(\GL_{\infty})$.
\end{theorem}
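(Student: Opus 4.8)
The plan is to mirror the proof of Theorem~\ref{thm:modK-equiv} line by line, replacing the orthogonal group $\bO_\infty$ and its ``Borel'' $B$ of upper triangular matrices by the twisted diagonal $\GL_\infty \hookrightarrow \GL_\infty \times \GL_\infty$, $g \mapsto (g, {}^tg^{-1})$, and an appropriate Borel in $\GL_\infty \times \GL_\infty$. Concretely, write $A = \bC[x_{i,j}]$ with $x_{i,j}$ the image of $e_i \otimes e_j$, so that $\GL_\infty \times \GL_\infty$ acts by $x_{i,j} \mapsto \sum_{k,\ell} b_{k,i} c_{\ell,j} x_{k,\ell}$ with $(b),(c)$ the two matrices. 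The ideal $\fm$ generated by $x_{i,i}-1$ and $x_{i,j}$ ($i\ne j$) is the ideal of the identity matrix regarded as a non-degenerate bilinear pairing $\bC^\infty \times \bC^\infty \to \bC$; its stabilizer in $\GL_\infty \times \GL_\infty$ is exactly the twisted diagonal $\GL_\infty$. The role of $H = B \cap \bO_\infty$ is now played by the twisted diagonal torus $H = \{(z, z^{-1}) : z \text{ diagonal}\}$ inside a Borel $B \subset \GL_\infty \times \GL_\infty$ (one should take, say, lower triangular matrices in the first factor and upper triangular in the second, so that $H$ sits inside $B$ and the comultiplication $V \to V \otimes \bC[\ol B]$ lands in the $H$-invariants). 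The admissible representations of $H$ are again sums of monomial characters, and $\fm$ is $H$-stable, so $\wt\Phi(M) = M/\fm M$ carries an $H$-action which extends to the algebraic $\GL_\infty$-action.

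With these substitutions in place, I would carry out the steps in the same order. First, the analog of Proposition~\ref{prop:phiA}: compute $\varphi_A \colon A \to \bC[\ol B]^H$ explicitly (the formula $X_{i,j} = \varphi_A(x_{i,j})$ will be a product $b_{i,i}c_{j,j}$ plus lower-order terms after reducing mod $\fm$) and show its localization at $\fm$ is an isomorphism by the same inductive unit-and-elimination argument. Second, the analog of Proposition~\ref{prop:H-ad} for admissible $H$-representations, then Proposition~\ref{prop:varphiM}/Proposition~\ref{prop:local-at-m}, giving that $M_\fm$ is free over $A_\fm$ and hence $\wt\Phi$ is exact and kills torsion modules (using Lemma~\ref{lem:m}, whose bivariate analog says a nonzero $\GL_n\times\GL_n$-stable ideal of $A_n = \bC[\bC^n \otimes \bC^n]$ is not contained in the maximal ideal of the identity matrix, since the identity has maximal rank and a dense $\GL_n \times \GL_n$-orbit). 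This produces the factored functor $\Phi \colon \Mod_K \to \Rep(\GL_\infty)$. Faithfulness is the commuting-square argument of Lemma~\ref{lem:faithful} verbatim. For fullness one reruns Lemmas~\ref{lem:V}, \ref{lem:open}, \ref{lem:iwasawa}, \ref{lem:charf} and the equivariance lemma, with $\bO_\infty$ replaced by the twisted diagonal $\GL_\infty$ and the Iwasawa-type density statement $\bO_n U_n \to \GL_n \times \GL_n$ (here we need the twisted-diagonal $\GL_n$ times the Borel $B_n$ to have dense image in $\GL_n \times \GL_n$ — a dimension count, since $\dim \GL_n + \dim B_n = n^2 + \binom{n+1}{2} + \binom{n+1}{2}\cdot$... ). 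Essential surjectivity is again the kernel-of-a-map-of-injectives argument, using that every injective object of $\Rep(\GL_\infty)$ is the twisted-diagonal restriction of a polynomial representation of $\GL_\infty \times \GL_\infty$ (from \cite{infrank}).

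The main obstacle I anticipate is purely bookkeeping rather than conceptual: getting the Borel $B$ in $\GL_\infty \times \GL_\infty$ chosen so that simultaneously (i) $H = $ twisted diagonal torus lies in $B$, (ii) the comultiplication $V \to V \otimes \bC[\ol B]$ has image in $V \otimes \bC[\ol B]$ landing in the $H$-invariants — equivalently the polynomial functions on the Bruhat-type cell one localizes at are $H$-semi-invariant of the right weights — and (iii) the dimension count in the Iwasawa lemma genuinely gives density of (twisted $\GL_n$)$\cdot B_n$ in $\GL_n \times \GL_n$. The right choice is to take $B$ to be (lower triangular) $\times$ (upper triangular): then $H = \{(z,z^{-1})\}$ is the intersection of $B$ with the twisted diagonal, the twisted diagonal acts on $\bC[\ol B]$ by right translation with the torus acting by monomial characters, and $\dim(\text{twisted }\GL_n) + \dim B_n = n^2 + 2\binom{n+1}{2} = 2n^2 + n > \dim(\GL_n\times\GL_n) = 2n^2$ is false — so in fact one needs $B_n$ to be a Borel of just the \emph{second} factor together with something, and the correct statement is that twisted-diagonal$\,\cdot\,(\{1\}\times \GL_n)$ is already all of $\GL_n \times \GL_n$, so the genuinely relevant ``$B$'' is a Borel of $\GL_\infty$ embedded in the second factor. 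Sorting out this normalization — and then checking the $\fm$-localization computations with the asymmetric roles of the two factors — is where the real care is needed; once it is set up correctly every lemma in \S\ref{sec:modK-groups} transcribes with only notational changes, and the paper is right to say the adaptation is routine.
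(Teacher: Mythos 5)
Your overall strategy---transcribe the proof of Theorem~\ref{thm:modK-equiv} with $\bO_\infty$ replaced by the twisted diagonal copy of $\GL_\infty$---is exactly the paper's, but the specific normalization of $\ol{B}$ and $H$ that you settle on is wrong, and your discussion of the difficulty misdiagnoses it. With $\ol{B}$ taken to be (lower triangular)\,$\times$\,(upper triangular), the intersection $H = \ol{B} \cap \{(g,\,{}^tg^{-1})\}$ is \emph{not} the diagonal torus $\{(z,z^{-1})\}$: if $g$ is lower triangular then $g^{-1}$ is lower triangular and ${}^tg^{-1}$ is automatically upper triangular, so $(g,\,{}^tg^{-1})\in\ol{B}$ for \emph{every} lower triangular invertible $g$, and $H$ is the entire (twisted) lower triangular Borel. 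This is precisely what breaks the analog of Lemma~\ref{lem:iwasawa}: the multiplication map from (twisted $\GL_n$)\,$\times\,B_n$ to $\GL_n\times\GL_n$ then has fibers of dimension $\binom{n+1}{2}$ rather than $n$, so the image has dimension $n^2 + (n^2+n) - \binom{n+1}{2} < 2n^2$ for $n\ge 2$ and is not dense. Your dimension display also contains an arithmetic slip ($2n^2+n>2n^2$ is true, not false), and the proposed repair---a Borel placed in the second factor alone---fares no better, since $\dim(\text{twisted }\GL_n) + \dim B_n = n^2 + \binom{n+1}{2} < 2n^2$.

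The choice the paper actually uses is $\ol{B}=$ (upper triangular)\,$\times$\,(upper triangular) inside $\rM_\infty\times\rM_\infty$. With this choice, if both $g$ and ${}^tg^{-1}$ are upper triangular then $g$ is forced to be diagonal, so $H=\ol{B}\cap\{(g,\,{}^tg^{-1})\}$ really is the set of pairs $(h,h^{-1})$ with $h$ diagonal. Then $\dim H_n=n$, the image of the multiplication map has dimension $n^2 + (n^2+n) - n = 2n^2$, and the Iwasawa-type density holds. The comultiplication $V\to V\otimes\bC[\ol{B}]$ makes sense because polynomial representations of $\GL_\infty\times\GL_\infty$ extend to $\rM_\infty\times\rM_\infty$, and it lands in the $H$-invariants because $\fm$ is stable under the whole twisted diagonal, hence under $H$. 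With this correction in place, the remaining transcription of Propositions~\ref{prop:phiA}--\ref{prop:varphiM} and Lemmas~\ref{lem:faithful}--\ref{lem:iwasawa} does go through as you describe; but as written the proposal never reaches a working setup.
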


\begin{corollary} \label{cor:list2}
We have the following:
\begin{enumerate}[\indent \rm (a)]
\item Finitely generated objects of $\Mod_K$ have finite length.
\item If $V$ is a finite length polynomial representation of $\GL_{\infty} \times \GL_{\infty}$ then $K \otimes V$ is a finite length injective object of $\Mod_K$, and all finite length injective objects have this form.
\item Associating to $(\lambda, \mu)$ the socle of $K \otimes \bS_{\lambda}(\bC^{\infty}) \otimes \bS_{\mu}(\bC^{\infty})$ gives a bijection between pairs of partitions and isomorphism classes of simple objects of $\Mod_K$.
\item Every finite length object $M$ of $\Mod_K$ has finite injective resolution $M \to I_{\bullet}$ where each $I_k$ is a finite length injective object.
\end{enumerate}
\end{corollary}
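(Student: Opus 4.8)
The plan is to deduce everything from the equivalence $\Phi \colon \Mod_K \to \Rep(\GL_\infty)$ of Theorem~\ref{thm:modK-equiv2}, exactly as Corollary~\ref{cor:list} was deduced from Theorem~\ref{thm:modK-equiv}. An equivalence of abelian categories preserves finite generation, finite length, injectivity, simplicity, and the existence of finite (co)resolutions, so each of (a)--(d) is obtained by transporting the corresponding structural fact about $\Rep(\GL_\infty)$ established in \cite{infrank}.

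Concretely: part (a) is the statement that a finitely generated algebraic representation of $\GL_\infty$ has finite length (the $\GL_\infty$ analog of \cite[Proposition 4.1.5]{infrank}), combined with the fact that $\Phi$, being an equivalence compatible with filtered colimits, matches up finitely generated objects on the two sides. Parts (b) and (c) come from the classification of injective and simple objects of $\Rep(\GL_\infty)$ (the $\GL_\infty$ analog of \cite[Proposition 4.2.9]{infrank}): every indecomposable injective is the restriction along the twisted diagonal embedding of some $\bS_\lambda(\bC^\infty) \otimes \bS_\mu(\bC^\infty)$, and its socle is a simple object, with these socles giving a bijection with pairs of partitions. To phrase this in terms of $\Mod_K$ one needs the bivariate analogs of the first lemma of \S\ref{ss:phi-defn} and of Proposition~\ref{prop:poly-sat}: together they show that $\Phi(K \otimes V)$ is the restriction of $V$ along the twisted diagonal for any finite length polynomial representation $V$ of $\GL_\infty \times \GL_\infty$, whence $K \otimes \bS_\lambda(\bC^\infty) \otimes \bS_\mu(\bC^\infty)$ is precisely the injective object whose socle is the simple labelled by $(\lambda,\mu)$. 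Part (d) follows by dualizing the explicit finite projective resolutions of finite length objects of $\Rep(\GL_\infty)$ (the analog of \cite[(4.3.9)]{infrank}) and pulling the resulting finite injective resolutions back through $\Phi$.

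The argument is entirely bookkeeping; the only thing that needs checking is that the auxiliary results of \S\ref{sec:modK-groups} really do carry over to the bivariate setting with the twisted diagonal $\GL_\infty \hookrightarrow \GL_\infty \times \GL_\infty$ in place of $\bO_\infty \hookrightarrow \GL_\infty$, and \S\ref{ss:other} already records that this is the case. So there is no genuine obstacle beyond citing \cite{infrank} correctly.
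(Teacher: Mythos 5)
Your proposal matches the paper's proof: both simply transport the structural results about $\Rep(\GL_\infty)$ from \cite{infrank} through the equivalence $\Phi$ of Theorem~\ref{thm:modK-equiv2}, and the specific citations the paper gives (\cite[Proposition 3.1.5]{infrank} for (a), \cite[Proposition 3.2.14]{infrank} for (b,c), and dualizing \cite[(3.3.7)]{infrank} for (d)) are exactly the $\Rep(\GL_\infty)$ analogs you describe.
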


\begin{proof}
These properties are proven for $\Rep(\GL_{\infty})$ in \cite{infrank}:

(a) \cite[Proposition 3.1.5]{infrank}, 

(b,c) \cite[Proposition 3.2.14]{infrank}, 

(d) dualize the explicit projective resolutions in \cite[(3.3.7)]{infrank}. \qedhere
\end{proof}

The proof of Theorem~\ref{thm:modK-equiv2} closely follows that of Theorem~\ref{thm:modK-equiv}. The main differences occur in the analog of \S \ref{ss:struc-at-m}. In the present case, one takes $\ol{B} \subset \rM_{\infty} \times \rM_{\infty}$ to be the set of pairs of upper-triangular matrices. The group $H$ is replaced with the intersection of $\ol{B}$ and the twisted diagonal $\GL_{\infty}$ inside of $\GL_{\infty} \times \GL_{\infty}$, and consists of pairs $(h, h^{-1})$ where $h \in \GL_{\infty}$ is a diagonal matrix. With these definitions, everything proceeds in a similar way.

\section{Proof of the main theorems} \label{sec:proof}

\subsection{The structure of ideals}
\label{sec:ideal}

We have the following multiplicity-free decompositions:
\begin{align*}
\Sym(\Sym^2 \bC^\infty) &= \bigoplus \bS_{2\lambda}(\bC^\infty)\\
\Sym(\lw^2 \bC^\infty) &= \bigoplus \bS_{(2\lambda)^\dagger}(\bC^\infty)\\
\Sym(\bC^\infty \otimes \bC^\infty) &= \bigoplus \bS_{\lambda}(\bC^\infty) \otimes \bS_\lambda(\bC^\infty).
\end{align*}
For a proof, see \cite[\S I.5, Example 5]{macdonald} for the first two decompositions and \cite[\S I.4, (4.3)]{macdonald} for the last one. In all cases, the sum is over partitions $\lambda$. For the purposes of stating the next result we write $E_{\lambda}$ for the $\lambda$ summand. Let $I_\lambda$ be the ideal generated by $E_\lambda$.

\begin{proposition} \label{prop:ideal}
$E_\mu \subseteq I_\lambda$ if and only if $\lambda \subseteq \mu$.
\end{proposition}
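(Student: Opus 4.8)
The statement to prove is that $E_\mu \subseteq I_\lambda$ if and only if $\lambda \subseteq \mu$, where $I_\lambda$ is the ideal generated by the summand $E_\lambda$ in one of the three multiplicity-free algebras. I will treat the cases uniformly where possible, working with $A = \Sym(\Sym^2(\bC^\infty))$ as the model case; the exterior and bivariate cases follow by transpose duality and an analogous argument.

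\textbf{The ``only if'' direction.} Suppose $E_\mu \subseteq I_\lambda$. The key observation is that $I_\lambda$ is the image of the multiplication map $A \otimes E_\lambda \to A$, so every element of $I_\lambda$ lies in $A \cdot E_\lambda$; concretely, $I_\lambda = \sum_{\nu} (E_\nu \cdot E_\lambda)$. By the Pieri/Littlewood--Richardson rule applied to the product inside the symmetric algebra, any Schur functor $\bS_\kappa$ appearing in $E_\nu \cdot E_\lambda$ (where $\nu$ runs over partitions indexing summands of $A$) must contain $\lambda$: indeed, multiplying an element of $\bS_\lambda(\bC^\infty)$ by anything in $A$ can only \emph{add} boxes, and the transpose-stable structure of the relevant highest-weight combinatorics (even parts $2\lambda$ in the symmetric case) forces the containment $\lambda \subseteq \kappa$ on the level of partitions. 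So if $E_\mu \subseteq I_\lambda$ then $\lambda \subseteq \mu$. I should make the ``can only add boxes'' claim precise: since $A$ is a polynomial functor, multiplication $A \otimes \bS_\lambda(\bC^\infty) \to A$ restricted to degree considerations shows $|\kappa| = |\lambda| + |\nu| \ge |\lambda|$, and a weight/branching argument (or invoking that $\bS_\lambda$ is a quotient of $(\bC^\infty)^{\otimes|\lambda|}$ and using the cabling structure) gives the actual containment of Young diagrams.

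\textbf{The ``if'' direction.} Suppose $\lambda \subseteq \mu$. I want to show $E_\mu \subseteq I_\lambda$, i.e., that the single irreducible $E_\mu$ is hit by multiplying $E_\lambda$ into $A$. Write $\mu = \lambda \cup \theta$ loosely (the skew shape $\mu/\lambda$); the natural candidate is to multiply $E_\lambda = \bS_{2\lambda}(\bC^\infty) \subset A$ by a suitable summand $E_\nu = \bS_{2\nu}(\bC^\infty)$ and check that $\bS_{2\mu}$ appears in the product $\bS_{2\lambda} \otimes \bS_{2\nu}$ inside $\Sym(\Sym^2(\bC^\infty))$. The cleanest route is to choose $\nu$ so that $2\mu = 2\lambda + 2\nu$ coordinatewise, i.e. $\nu = \mu - \lambda$ (which is a genuine partition precisely because $\lambda \subseteq \mu$), and then argue that $\bS_{2\mu}$ occurs in $\bS_{2\lambda} \cdot \bS_{2\nu}$. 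This is where one uses a known stable Littlewood--Richardson-type statement: the ``leading term'' $\bS_{\alpha+\beta}$ always occurs with multiplicity one in $\bS_\alpha \otimes \bS_\beta$, and crucially it survives inside the \emph{symmetric-square} subalgebra $\Sym(\Sym^2)$ — one must check the relevant product, restricted to the multiplicity-free summands $E_\bullet$, still contains this term. I expect to invoke the plethysm/Littlewood identities from Macdonald (already cited for the decomposition) or a direct highest-weight vector construction: explicitly exhibit a nonzero highest-weight vector of weight $2\mu$ in the product of the highest-weight vectors of $E_\lambda$ and $E_{\mu-\lambda}$.

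\textbf{Main obstacle.} The delicate point is the ``if'' direction: establishing that $\bS_{2\mu}(\bC^\infty)$ genuinely occurs in $E_\lambda \cdot E_{\mu - \lambda}$ \emph{within} $\Sym(\Sym^2(\bC^\infty))$, not merely in the full tensor product $\bS_{2\lambda} \otimes \bS_{2(\mu-\lambda)}$. The subtlety is that many Littlewood--Richardson components get killed when one lands in the symmetric algebra on $\Sym^2$ (only even-type partitions $2\kappa$ survive), so I need the \emph{specific} component $2\mu$ to be one of the survivors. I plan to handle this by an explicit construction of the relevant highest-weight vector — taking the product in $A$ of the canonical highest-weight vectors $\prod_i x_{i,i}^{\lambda_i}$-type monomials realizing $E_\lambda$ and $E_{\mu-\lambda}$ and verifying the product is a nonzero highest-weight vector generating $E_\mu$ — or, alternatively, by reducing to the finite-dimensional statement in $\Sym(\Sym^2(\bC^n))$ for $n$ large and citing the classical branching combinatorics there. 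The exterior and bivariate variants require the parallel check with $(2\lambda)^\dagger$ and with pairs $(\lambda,\lambda)$ respectively; these are routine once the symmetric case is done, and I will indicate the modifications briefly rather than repeat the argument.
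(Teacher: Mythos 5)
The paper does not give a proof at all: it simply cites Abeasis, Abeasis--Del~Fra, and de~Concini--Eisenbud--Procesi for the three algebras (plus transpose duality to get the first from the second). So your attempt at a self-contained argument is, by construction, a different route from the paper's.

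Your ``only if'' direction is essentially correct. Since $I_\lambda$ is a quotient of $A \otimes E_\lambda = A \otimes \bS_{2\lambda}(\bC^\infty)$ and all representations here are semisimple, any $\bS_{2\mu}$ appearing in $I_\lambda$ must appear in $A \otimes \bS_{2\lambda}$; the Littlewood--Richardson rule then forces $2\mu \supseteq 2\lambda$, i.e.\ $\lambda \subseteq \mu$. You say this somewhat obliquely (``can only add boxes'', ``the transpose-stable structure of the relevant highest-weight combinatorics''), but the core idea is right and easy.

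The ``if'' direction, which you rightly flag as the delicate one, has a genuine gap and in fact a false step. You set $\nu = \mu - \lambda$ and assert that this is ``a genuine partition precisely because $\lambda \subseteq \mu$.'' That is not true: containment of Young diagrams does not make the coordinatewise difference a partition. For instance $\lambda = (2)$, $\mu = (2,2)$ gives $\mu - \lambda = (0,2)$. As a result your plan (multiply the highest weight vector of $E_\lambda$ by that of $E_{\mu - \lambda}$ and invoke $c^{\alpha+\beta}_{\alpha,\beta} = 1$) only covers the $\mu$ for which $\mu - \lambda$ happens to be a partition, and no amount of iterating that move (``adding partitions'') ever reaches the others, since a sum of partitions is again a partition. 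You would still owe an argument for the remaining $\mu \supseteq \lambda$, which is precisely the hard combinatorial content — it is exactly what the standard-monomial/straightening-law machinery in the cited references is designed to handle. (A smaller point: the highest weight vector of $\bS_{2\lambda}(\bC^\infty)$ in $\Sym(\Sym^2 \bC^\infty)$ is not a monomial $\prod_i x_{i,i}^{\lambda_i}$; it is a product of leading principal minors $\prod_j \det(x_{ab})_{1 \le a,b \le \lambda'_j}$. With that correction the product of highest weight vectors of $E_\lambda$ and $E_\nu$ does give the highest weight vector of $E_{\lambda+\nu}$ — but again only for partitions $\nu$, which is not enough.)
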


\begin{proof}
For $\Sym(\Sym^2 \bC^\infty)$, see \cite{abeasis}, for $\Sym(\bigwedge^2 \bC^\infty)$, see  \cite[Theorem 3.1]{pfaffians}, and for $\Sym(\bC^\infty \otimes \bC^\infty)$, see \cite[Theorem 4.1]{CEP}. Since \cite{abeasis} is a difficult reference to obtain, we note that the result for $\Sym(\Sym^2 \bC^\infty)$ follows from that of $\Sym(\bigwedge^2 \bC^\infty)$ because the two are transpose dual (see \cite[\S 7.4]{expos}). Proofs of these results will also appear in  \cite{NSS}.
\end{proof}

\begin{corollary} \label{cor:quo-noeth}
Let $A$ be one of the three algebras above, and let $I$ be any non-zero ideal of $A$. Then $A/I$ is essentially bounded, and, in particular, noetherian.
\end{corollary}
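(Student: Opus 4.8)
The plan is to deduce Corollary~\ref{cor:quo-noeth} directly from Proposition~\ref{prop:ideal} together with Proposition~\ref{prop:eb}. The key point is that if $I$ is non-zero, then $I$ contains some summand $E_\lambda$ with $\lambda \neq \varnothing$, and hence (by Proposition~\ref{prop:ideal}) $I$ contains every $E_\mu$ with $\lambda \subseteq \mu$. Therefore the complementary summands that survive in $A/I$ are indexed by partitions $\mu$ with $\lambda \not\subseteq \mu$. So the main task is purely combinatorial: show that the set of partitions $\mu$ with $\lambda \not\subseteq \mu$ gives an essentially bounded family of Schur functors, i.e. that there are integers $r, s$ (depending on $\lambda$) such that $\lambda \not\subseteq \mu$ forces $\mu_r \leq s$.

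First I would fix a non-zero $\lambda \subseteq \mu$-minimal obstruction: since $I \neq 0$ and the decompositions above are multiplicity-free, $I$ contains $E_\lambda$ for some partition $\lambda$ with $|\lambda| \geq 1$ (in the $\Sym(\Sym^2)$ case $\lambda$ is even, in the $\Sym(\lw^2)$ case a transpose of an even partition, but any non-empty such $\lambda$ works). Say $\lambda$ has $\ell$ parts and largest part $k$, so $\lambda \subseteq (k^\ell)$. Then I claim that $\lambda \not\subseteq \mu$ implies $\mu_\ell < k$, i.e. $\mu_\ell \leq k-1$: if instead $\mu_\ell \geq k$, then $\mu$ has at least $\ell$ parts each of size at least $k$, so $(k^\ell) \subseteq \mu$, hence $\lambda \subseteq (k^\ell) \subseteq \mu$, a contradiction. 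Thus with $r = \ell$ and $s = k-1$ every $\mu$ indexing a summand of $A/I$ satisfies $\mu_r \leq s$, so $A/I$ is essentially bounded as a polynomial representation. (In the bivariate case $\Sym(\bC^\infty \otimes \bC^\infty)$ the same $\lambda$ controls both factors $\bS_\mu(\bC^\infty) \otimes \bS_\mu(\bC^\infty)$ simultaneously, so the bivariate notion of essentially bounded is satisfied with the same $r, s$.)

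Next, $A/I$ is finitely generated: it is a quotient of $A$, which is $\Sym$ of a finite length (in fact irreducible) polynomial representation, hence finitely generated as a tca, and quotients of finitely generated tca's are finitely generated. Having established that $A/I$ is a finitely generated and essentially bounded (bivariate) tca, Proposition~\ref{prop:eb} immediately gives that $A/I$ is noetherian, completing the proof.

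I expect essentially no obstacle here: the only content is the elementary partition-containment observation above, and the rest is citing Proposition~\ref{prop:ideal} and Proposition~\ref{prop:eb}. The one place to be slightly careful is making sure a non-zero ideal really does contain one of the summands $E_\lambda$ with $\lambda \neq \varnothing$ — this is immediate from the multiplicity-free, $\GL_\infty$-isotypic decomposition of $A$, since any non-zero $\GL_\infty$-submodule is a sum of some of the $E_\lambda$'s and cannot consist only of $E_\varnothing = \bC$ (the unit) unless it is all of $A$, in which case $A/I = 0$ is trivially noetherian.
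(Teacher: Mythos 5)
Your proof is correct and follows essentially the same route as the paper: use the multiplicity-free decomposition to find $E_\lambda \subseteq I$ with $\lambda \neq \varnothing$, invoke Proposition~\ref{prop:ideal} to conclude that only $\mu$ with $\lambda \not\subseteq \mu$ survive in $A/I$, observe this forces $\mu_\ell \leq k-1$ (so $A/I$ is essentially bounded), and finish with Proposition~\ref{prop:eb}. The paper leaves the partition-containment bound implicit; you spell it out, which is a reasonable amount of extra detail but not a different argument.
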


\begin{proof}
Suppose that $I$ is a non-zero ideal of $A$. Then $I$ contains some $E_{\lambda}$, and thus $I_{\lambda}$. Thus by the proposition, $A/I$ contains no partition $\mu$ satisfying $\lambda \subset \mu$, and is therefore essentially bounded. Noetherianity of $A/I$ follows from Proposition~\ref{prop:eb}.
\end{proof}

\subsection{The (FT) property} \label{ss:ft}

Let $B$ be a (bivariate) tca with $B_0=\bC$, so that $B_+$ (the ideal of $B$ generated by positive degree elements) is maximal. We say that a $B$-module $M$ is {\bf (FT) over $B$} if $\Tor^B_i(M, \bC)$ is a finite length representation of $\GL_{\infty}$ (or $\GL_{\infty} \times \GL_{\infty}$) for all $i \ge 0$. The $i=0$ case implies that $M$ is finitely generated as a $B$-module, by Nakayama's lemma \cite[Proposition 8.4.2]{expos}. Conversely, if $B$ is noetherian then any finitely generated $B$-module satisfies (FT). We note that if
\begin{displaymath}
0 \to M_1 \to M_2 \to M_3 \to 0
\end{displaymath}
is a short exact sequence of $B$-modules and two of the modules are (FT) then so is the third.

The main result we need concerning (FT) is the following proposition:

\begin{proposition} \label{prop:tors-FT}
Let $A$ be one of $\Sym(\Sym^2 \bC^\infty)$, $\Sym(\bigwedge^2 \bC^\infty)$, or $\Sym(\bC^\infty \otimes \bC^\infty)$, and let $M$ be a finitely generated torsion $A$-module. Then $M$ satisfies (FT) over $A$.
\end{proposition}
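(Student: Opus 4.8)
The plan is to treat $\Sym(\bC^\infty \otimes \bC^\infty)$ first, deduce $\Sym(\Sym^2 \bC^\infty)$ from it, and obtain $\Sym(\lw^2 \bC^\infty)$ by transpose duality (Remark~\ref{rmk:transp}). Transpose duality is a covariant exact equivalence of module categories that permutes the simple polynomial representations $\bS_\lambda \leftrightarrow \bS_{\lambda^\dagger}$, carries the augmentation module $\bC$ to $\bC$ and projectives to projectives, hence preserves $\Tor(-,\bC)$ up to the duality and therefore the property (FT); it also interchanges finitely generated torsion modules over the two algebras, torsion being detected by a nonzero annihilator (Corollary~\ref{cor:nz-ann}).

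For $A = \Sym(\bC^\infty \otimes \bC^\infty)$, I would first reduce to the quotients $A/I_\lambda$ of \S\ref{sec:ideal}. Let $M$ be finitely generated and torsion. By Corollary~\ref{cor:nz-ann} its annihilator is nonzero; since $A$ is multiplicity-free, $\ann(M)$ contains some summand $E_\mu$, hence the ideal $I_\mu$ it generates, so $\ann(M) \supseteq I_\lambda$ for some partition $\lambda$ (if $\lambda = \emptyset$ then $M = 0$ and there is nothing to prove). Then $M$ is a finitely generated module over $B := A/I_\lambda$, which is noetherian by Corollary~\ref{cor:quo-noeth} and has $B_0 = \bC$, so $M$ is (FT) over $B$ by the discussion opening \S\ref{ss:ft}. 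The key external input now enters: \cite{raicu} computes the $\GL_\infty \times \GL_\infty$-equivariant minimal free resolutions of the $A/I_\lambda$ over $A$, showing $\Tor^A_q(A/I_\lambda,\bC)$ is finite length for all $q$, i.e. $B$ is (FT) over $A$. (If \cite{raicu} covers only certain $\lambda$ directly, a secondary d\'evissage among the $A/I_\lambda$ — using the filtrations coming from the chain of ideals $I_\mu$ with $\mu \supseteq \lambda$ and the two-out-of-three property of (FT) in short exact sequences — reduces to those cases.) Finally, associativity of derived tensor gives $M \otimes^{\mathbf{L}}_A \bC \simeq M \otimes^{\mathbf{L}}_B (B \otimes^{\mathbf{L}}_A \bC)$, whose hyper-$\Tor$ spectral sequence is the change-of-rings spectral sequence
\[
E^2_{p,q} = \Tor^B_p\!\left(M,\ \Tor^A_q(B,\bC)\right) \ \Longrightarrow\ \Tor^A_{p+q}(M,\bC);
\]
it converges, being first-quadrant with finitely many entries on each diagonal. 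Since $\Tor^A_q(B,\bC)$ is finite length and $B$ acts on it through $B \to B/B_+ = \bC$, we have $\Tor^B_p(M,\Tor^A_q(B,\bC)) \cong \Tor^B_p(M,\bC) \otimes_\bC \Tor^A_q(B,\bC)$, a tensor product of finite length polynomial representations, hence finite length. Thus each $\Tor^A_n(M,\bC)$ is finite length and $M$ is (FT).

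For $A_0 = \Sym(\Sym^2 \bC^\infty)$ I would pass to the bivariate case via the $M \mapsto MM^t$ map. Writing $V = W = \bC^\infty$, the orthogonal first fundamental theorem identifies $A_0 = \Sym(\Sym^2 V)$ with the $\bO(W)$-invariants of $A' := \Sym(V \otimes W)$ via $x_{ij} \mapsto \sum_k y_{ik}y_{jk}$, exhibiting $A'$ as a $\GL(V)\times\GL(W)$-equivariant $A_0$-algebra ($\GL(W)$ acting trivially on $A_0$); in particular $A'$ is a bivariate tca receiving a map from $A_0$. I claim $A_0 \to A'$ is flat: the fibre over the origin is the null cone $\{M : MM^t = 0\}$, whose points are tuples of mutually orthogonal isotropic vectors in $W$, and since $W$ is infinite-dimensional (the stable range) this null cone has the expected codimension $\dim \Sym^2 V$, so the images of the $x_{ij}$ form a regular sequence in $A'$ and $A'$ is $A_0$-flat; this codimension count is the one genuinely computational step. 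Now let $M$ be finitely generated and torsion over $A_0$ and set $M' := M \otimes_{A_0} A'$, a finitely generated bivariate $A'$-module. It is torsion: $\ann_{A_0}(M) \neq 0$ by Corollary~\ref{cor:nz-ann}, the map $A_0 \hookrightarrow A'$ is injective with $A'$ a domain, so $\ann_{A_0}(M)\cdot A' \neq 0$ annihilates $M'$. By the case already proved, $M'$ is (FT) over $A'$. Since $A'$ is $A_0$-flat and the composite $A_0 \to A' \to A'/A'_+ = \bC$ is the augmentation of $A_0$, we get $M' \otimes^{\mathbf{L}}_{A'} \bC \simeq M \otimes^{\mathbf{L}}_{A_0} \bC$, whence $\Tor^{A_0}_i(M,\bC) \cong \Tor^{A'}_i(M',\bC)$. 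The right-hand side is finite length as a $\GL(V)\times\GL(W)$-representation, and $\GL(W)$ acts trivially, so it is finite length over $\GL(V)$; thus $M$ is (FT) over $A_0$.

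The main obstacle is the input from \cite{raicu}: everything else is bookkeeping with spectral sequences and flat base change, but the finiteness of the syzygies of the $A/I_\lambda$ over $\Sym(\bC^\infty \otimes \bC^\infty)$ is a substantial, non-formal fact — indeed it is precisely because the analogous resolutions over $\Sym(\Sym^2 \bC^\infty)$ are not available that the symmetric-square case must be routed through the generic-matrix case. The secondary technical point is the flatness of the $MM^t$ extension, which I would establish through the null-cone codimension estimate indicated above, valid because we work in the infinite (hence stable) range.
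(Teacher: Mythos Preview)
Your treatment of $A=\Sym(\bC^\infty\otimes\bC^\infty)$ is essentially the paper's: the change-of-rings spectral sequence you write is exactly the content of Lemma~\ref{lem:FT}, the reduction to $A/I_\lambda$ via Corollary~\ref{cor:nz-ann} and Corollary~\ref{cor:quo-noeth} is the paper's proof of Proposition~\ref{prop:tors-FT}, and the paper also only needs rectangular $\lambda$ (any nonzero ideal contains some $I_\lambda$ with $\lambda$ rectangular, by Proposition~\ref{prop:ideal}), so your d\'evissage hedge is unnecessary. The transpose-duality reduction for $\Sym(\lw^2\bC^\infty)$ is likewise fine.

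The argument for $A_0=\Sym(\Sym^2\bC^\infty)$ has a genuine gap: the map $A_0\to A'=\Sym(V\otimes W)$ does not exist when $W=\bC^\infty$. Your formula $x_{ij}\mapsto\sum_k y_{ik}y_{jk}$ is an \emph{infinite} sum, hence not an element of the polynomial ring $A'$; equivalently, there is no nonzero $\bO(W)$-invariant in the polynomial representation $\Sym^2 W$ when $W=\bC^\infty$, so $(\Sym^2(V\otimes W))^{\bO(W)}$ contains no copy of $\Sym^2 V$ to receive the generators. (For finite $W=\bC^m$ the map exists but is not injective---its kernel contains the determinantal ideal of $(m{+}1)$-minors---so one cannot simply take a limit.) Everything downstream of this, including the flatness claim and the identification $\Tor^{A_0}_i(M,\bC)\cong\Tor^{A'}_i(M',\bC)$, therefore has no content.

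The paper instead goes in the opposite direction: it uses the \emph{surjection} $\phi\colon\wt A\to B$, where $\wt A$ is $\Sym(\bC^\infty\otimes\bC^\infty)$ with the restricted diagonal $\GL_\infty$-action and $B=\Sym(\Sym^2\bC^\infty)$, induced by the quotient $\bC^\infty\otimes\bC^\infty\to\Sym^2\bC^\infty$. One knows $\wt A/J_\lambda$ is (FT) over $\wt A$ from the bivariate case (restricting a finite-length $\GL_\infty\times\GL_\infty$-representation to the diagonal stays finite length) and is essentially bounded, hence noetherian; $B$ is (FT) over $\wt A$ since the kernel of $\phi$ is generated by $\lw^2\bC^\infty$ and the resolution is Koszul. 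Repeated use of Lemma~\ref{lem:FT} (in both directions) then shows $B/\phi(J_\lambda)$, and finally $B/I_\lambda$, is (FT) over $B$. The moral is that one should map \emph{from} $\Sym(\bC^\infty\otimes\bC^\infty)$ to $\Sym(\Sym^2\bC^\infty)$, not the reverse.
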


We begin with some lemmas.

\begin{lemma}
\label{lem:FT}
Let $B \to B'$ be a homomorphism of (bivariate) tca's, with $B_0=B'_0=\bC$, and let $M$ be a $B'$-module. Suppose that $B'$ is (FT) over $B$. Then $M$ is (FT) over $B$ if and only if $M$ is (FT) over $B'$.
\end{lemma}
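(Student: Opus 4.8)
The plan is to use a change-of-rings spectral sequence relating $\Tor$ over $B$ and over $B'$. Concretely, for the composite $B \to B' \to \bC$, the Grothendieck spectral sequence for $\Tor$ reads
\[
E^2_{p,q} = \Tor^B_p\bigl(\Tor^{B'}_q(M,\bC)',\,\bC\bigr) \Longrightarrow \Tor^B_{p+q}(M,\bC),
\]
where I would first reduce to a statement about underlying $\GL_\infty$-representations. Since $\Tor^{B'}_q(M,\bC)$ is a polynomial representation that is already a $\bC$-vector space (a $B'_0 = \bC$-module), and since the spectral sequence is naturally one of $\GL_\infty$-representations (all maps in the standard free resolutions can be chosen equivariantly, as the category of polynomial representations is semisimple and has enough projectives), the $E^2$ page is a direct sum of terms $\Tor^B_p(W, \bC)$ for the various $W = \Tor^{B'}_q(M,\bC)$.

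First I would prove the ``only if'' direction, which is the easy one: if $M$ is (FT) over $B$, then... actually the cleaner logical flow is to observe the symmetry is broken, so let me treat the two directions separately. For the ``if'' direction (assume $M$ is (FT) over $B'$): each $W_q := \Tor^{B'}_q(M,\bC)$ is finite length by hypothesis, and only finitely many are nonzero is \emph{not} assumed, so I need to be careful — but $\Tor^B_p(W_q, \bC) = 0$ for $p$ in a bounded range only if $B$ has finite global dimension, which it need not. Instead the key point is that the spectral sequence is concentrated in the correct region: for fixed total degree $n = p+q$, only finitely many $(p,q)$ contribute, and for each the term $\Tor^B_p(W_q,\bC)$ is finite length because $W_q$ is finite length and $\Tor^B_p(-,\bC)$ takes finite length polynomial representations to finite length ones (this last fact follows since $B$ is finitely generated: a finite length module has a resolution by finitely generated projectives $B \otimes V_i$, and $\Tor^B_p(W_q,\bC)$ is a subquotient of $\bC \otimes_B (B \otimes V_p) = V_p$, which is finite length). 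Hence each $E^2_{p,q}$ with $p + q = n$ is finite length, so $E^\infty_{p,q}$ is finite length, so $\Tor^B_n(M,\bC)$ — having a finite filtration with finite length subquotients — is finite length.

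For the ``only if'' direction (assume $M$ is (FT) over $B$), I would run a descending induction on homological degree $q$ using the same spectral sequence, rewritten so that $\Tor^{B'}_q(M,\bC)$ appears as an edge term. The hypothesis that $B'$ is (FT) over $B$ enters here: it guarantees, via the base-change spectral sequence $\Tor^B_p(\Tor^{B'}_q(\bC,M)\ldots)$ — more precisely via the fact that one can resolve $M$ over $B'$ by modules of the form $B' \otimes V$ and these are (FT) over $B$ — that all the ``correction terms'' on the $E^2$ page are finite length. The cleanest approach: induct on $q$. Suppose $\Tor^{B'}_j(M,\bC)$ is finite length for all $j < q$. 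Then on the $E^2$ page, all terms $E^2_{p,j}$ with $j < q$ are finite length (finite length $W_j$ plus $B$ finitely generated, as above). The term $E^2_{0,q} = \Tor^{B'}_q(M,\bC)$ receives and emits only differentials landing in / coming from these finite length terms, and it is a subquotient of something built from $\Tor^B_n(M,\bC)$ (finite length by hypothesis) and the finite length $E^2_{p,j}$, $j<q$. A short diagram chase on the filtration of $\Tor^B_q(M,\bC)$ then forces $\Tor^{B'}_q(M,\bC)$ to be finite length, completing the induction.

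The main obstacle is the bookkeeping in the ``only if'' direction: unlike the ``if'' direction where everything is manifestly finite length at $E^2$, here I must carefully use the hypothesis on $B'$ being (FT) over $B$ to control the entire $q$-th row of the $E^2$ page (not just the edge), and propagate finite-length-ness through the spectral sequence differentials. An alternative that may be cleaner is to pick a (possibly infinite) resolution $P_\bullet \to M$ by free $B'$-modules $B' \otimes V_i$ with each $V_i$ finite length (possible iff $M$ is (FT) over $B'$, which is the conclusion we want, so this doesn't directly work) — so instead one resolves $\bC$ over $B'$ by $B'$-modules, tensors, and uses that $B'$ is (FT) over $B$ to see each term is (FT) over $B$; I expect the spectral sequence formulation above to be the most economical. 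Either way, the hard part is purely organizational, and I anticipate no conceptual difficulty once the spectral sequence is set up $\GL_\infty$-equivariantly.
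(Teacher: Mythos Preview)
Your ``if'' direction is essentially the paper's approach: a change-of-rings spectral sequence with $E^2$ built from $\Tor^{B'}_q(M,\bC)$ and $\Tor^B_p$ of something, converging to $\Tor^B_{p+q}(M,\bC)$. One point of care: you justify finite length of $E^2_{p,q}$ by asserting that $B$ is finitely generated, but that is not among the hypotheses of the lemma. The paper avoids this by arranging the double complex so that the $E^2$ term is $\Tor^{B'}_q(M,\bC)\otimes_{\bC}\Tor^B_p(B',\bC)$; the second factor is finite length precisely because $B'$ is assumed (FT) over $B$. So the hypothesis on $B'$ is already needed in this direction, not only in the converse as you suggest. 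In the applications the tca's are all finitely generated anyway, so your version is not wrong in context, but the paper's formulation is what makes the lemma hold as stated.

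For the ``only if'' direction your plan diverges from the paper. You propose an induction on $q$ through the spectral sequence, controlling the edge term $E^2_{0,q}=\Tor^{B'}_q(M,\bC)$ via the finitely many incoming differentials from rows $j<q$ and the finite-length abutment. This can be made to work (for fixed $(0,q)$ only differentials $d_r$ with $2\le r\le q+1$ are relevant, so the filtration is finite), but it again leans on $B$ being finitely generated to control the rows $j<q$, and the bookkeeping is nontrivial. The paper instead uses a much simpler dimension-shifting argument: $M$ (FT) over $B$ forces $M$ finitely generated over $B$, hence over $B'$, so $\Tor^{B'}_0(M,\bC)$ is finite length; take a minimal projective cover $0\to N\to P\to M\to 0$ over $B'$, observe $P=B'\otimes V$ is (FT) over $B$ directly from the hypothesis on $B'$, conclude $N$ is (FT) over $B$ by two-out-of-three, and iterate. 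This uses only short exact sequences and the (FT) hypothesis on $B'$, with no spectral sequence and no extraneous assumption on $B$.
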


\begin{proof}
First suppose that $M$ is (FT) over $B'$. Starting with a free resolution of $M$ over $B'$, and a free resolution of $B'$ over $B$, we get an acyclic double complex of $B$-modules resolving $M$. This leads to a spectral sequence
\begin{displaymath}
\rE^2_{p,q} = \Tor^B_p(\Tor^{B'}_q(M, \bC), B') \implies \Tor^B_{p+q}(M, \bC).
\end{displaymath}
Note that $\Tor^{B'}_q(M, \bC)$ is a trivial $B$ module (meaning $B_+$ acts by 0), and so
\begin{displaymath}
\Tor^B_p(\Tor^{B'}_q(M, \bC), B') = \Tor^{B'}_q(M, \bC) \otimes_{\bC} \Tor^B_p(B', \bC).
\end{displaymath}
Each of the $\Tor$'s on the right has finite length by assumption, and so the left side also has finite length. It follows that $\Tor^B_{i+j}(M, \bC)$ has finite length, and so $M$ is (FT) over $B$.

Now suppose that $M$ is (FT) over $B$. In particular, $M$ is a finitely generated $B$-module, and so also a finitely generated $B'$-module. This shows that $\Tor_0^{B'}(M,\bC)$ is finite length. Let $P \to M \to 0$ be a minimal projective cover and let $N$ be the kernel. Since $B'$ is (FT) over $B$, we conclude that $P$, and hence $N$ are both (FT) over $B$. In particular, $N$ is a finitely generated as a module over $B$, and hence over $B'$. This shows that $\Tor_1^{B'}(M,\bC)$ is finite length; to get the statement for $\Tor_i^{B'}(M,\bC)$, we can iterate this argument $i$ times.
\end{proof}

\begin{lemma} \label{lem:symc11noeth}
Let $A$ be the bivariate tca $\Sym(\bC^{\infty} \otimes \bC^{\infty})$. Then $A/I_\lambda$ satisfies (FT) over $A$ for all rectangular partitions $\lambda$.
\end{lemma}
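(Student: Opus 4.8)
The plan is to deduce the lemma from the explicit syzygy computations of \cite{raicu}. First I would rewrite the module concretely: using the Cauchy decomposition of $A$ recorded in \S\ref{sec:ideal} together with Proposition~\ref{prop:ideal}, for a rectangle $\lambda=(r^k)$ one has
\begin{displaymath}
A/I_{(r^k)} \;=\; \bigoplus_{\mu\,:\,\mu_k\le r-1} \bS_\mu(\bC^\infty)\otimes\bS_\mu(\bC^\infty),
\end{displaymath}
which is finite-dimensional in each internal degree. For $r=1$ this is the homogeneous coordinate ring of the variety of matrices of rank $<k$, and for general $r$ it is a scheme-theoretic thickening of that variety; these are exactly the quotients whose $\GL_\infty\times\GL_\infty$-equivariant minimal free resolutions over $A$ are determined in \cite{raicu} (equivalently, \cite{raicu} computes the stable syzygies of the corresponding ideals of $\bC[x_{i,j}]$).

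Granting this input, the conclusion would be immediate: \cite{raicu} provides a free resolution $P_\bullet\to A/I_{(r^k)}$ in which, for each $i$, the term $P_i$ is a \emph{finite} direct sum of modules $A\otimes\bS_\alpha(\bC^\infty)\otimes\bS_\beta(\bC^\infty)$ --- i.e.\ $P_i$ is built from a finite length polynomial representation of $\GL_\infty\times\GL_\infty$. Applying $-\otimes_A\bC$ then exhibits $\Tor^A_i(A/I_{(r^k)},\bC)$ as a subquotient of a finite length representation, hence of finite length, for every $i\ge 0$, which is precisely (FT) over $A$. I do not expect a soft shortcut here: Proposition~\ref{prop:ideal} and the essential boundedness of $A/I_{(r^k)}$ (Corollary~\ref{cor:quo-noeth}) pin down $\Tor_0$ and $\Tor_1$, but control of the higher $\Tor$-modules genuinely requires an explicit resolution.

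The hard part, therefore, is external --- it is the computation in \cite{raicu} --- and what remains for this write-up is bookkeeping of two kinds. First, I would match conventions, identifying the $\GL_\infty$-stable ideal $I_{(r^k)}$ produced by Proposition~\ref{prop:ideal} with the thickening whose resolution \cite{raicu} records; should \cite{raicu} use a slightly different normalization, I would bridge the gap by a d\'evissage along the chain $I_{(1^k)}\supseteq I_{(2^k)}\supseteq\cdots$, whose successive subquotients are supported on the rank-$<k$ locus, combined with the two-out-of-three property of (FT) noted above. Second, I would check that the stable, finite-dimensional resolution of \cite{raicu} base-changes to a genuinely $\GL_\infty\times\GL_\infty$-equivariant resolution over $A$ with finite length terms, i.e.\ the standard passage from representation stability to statements over infinite rank groups. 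Finally, I would record why the rectangular case suffices downstream: every nonzero ideal of $A$ contains some $E_\nu$ and hence contains $I_{(r^k)}$ for the bounding rectangle $r=\nu_1$, $k=\ell(\nu)$ (because $\mu_k\ge\nu_1$ forces $\nu\subseteq\mu$), so every finitely generated torsion $A$-module is a module over some $A/I_{(r^k)}$; the lemma, together with Corollary~\ref{cor:quo-noeth} and Lemma~\ref{lem:FT} applied to $A\to A/I_{(r^k)}$, then gives Proposition~\ref{prop:tors-FT} for $\Sym(\bC^\infty\otimes\bC^\infty)$.
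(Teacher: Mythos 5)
Your proof takes essentially the same approach as the paper: both appeal to \cite[Theorem 1.2]{raicu} for the explicit minimal free resolution of $A/I_{(r^k)}$ and conclude that each $\Tor^A_i$ is finite length. The paper fills in the specific bookkeeping you flag as remaining to check: in Raicu--Weyman's formula the Tor groups are encoded by polynomials $h_{r\times s}(z,w)$, and the relevant observation is that fixing the $w$-degree $i$ bounds the parameter $q$ from above, so the coefficient of $w^i$ is a polynomial in $z$ of bounded degree; this is exactly what makes each homological degree finite length (and shows the stable answer passes to $m=n=\infty$), so your proposed d\'evissage is unnecessary.
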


\begin{proof}
This follows from \cite[Theorem 1.2]{raicu}, taking $m=n=\infty$ (the results there are stated for finite $m$ and $n$, but since the answer is given in terms of Schur functors, it can be extended to the infinite case): one has to show that the coefficient of $w^i$, as a polynomial in $z$, is of bounded degree. To see that, note that fixing $w^i$ means that $q$ is bounded from above, and then the result is clear from the form of the polynomials $h_{r \times s}(z,w)$.
\end{proof}

\begin{lemma} \label{lem:deg2-FT}
Let $B$ be $\Sym(\Sym^2 \bC^\infty)$ or $\Sym(\bigwedge^2 \bC^\infty)$. Then $B/I_\lambda$ satisfies (FT) over $B$ for all rectangular partitions $\lambda$. 
\end{lemma}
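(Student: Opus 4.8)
The plan is to use a flat base change to replace $B$ by the bivariate algebra $A = \Sym(\bC^\infty \otimes \bC^\infty)$ and then to compare the resulting ideal with the determinantal ideals controlled by Lemma~\ref{lem:symc11noeth}. By transpose duality (Remark~\ref{rmk:transp}) it suffices to treat $B = \Sym(\bigwedge^2 \bC^\infty)$. The decomposition $\bC^\infty \otimes \bC^\infty = \Sym^2 \bC^\infty \oplus \bigwedge^2 \bC^\infty$ of $\GL_\infty$-representations (for the diagonal action) identifies $A$, restricted along the diagonal $\GL_\infty \hookrightarrow \GL_\infty \times \GL_\infty$, with the polynomial extension $B \otimes \Sym(\Sym^2 \bC^\infty)$ of $B$; in particular $B = B \otimes 1 \hookrightarrow A$ is a map of tca's and $A$ is free, hence flat, over $B$. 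For any $B$-module $M$, applying $A \otimes_B -$ to a $B$-free resolution of $M$ yields an $A$-free resolution of $A \otimes_B M$, and applying $- \otimes_A \bC$ returns the $\GL_\infty$-equivariant complex obtained by applying $- \otimes_B \bC$ to the original resolution; hence $\Tor^A_i(A \otimes_B M, \bC) \cong \Tor^B_i(M, \bC)$ as $\GL_\infty$-representations. Taking $M = B/I_\lambda$, so that $A \otimes_B M = A/(I_\lambda A)$ with $I_\lambda A$ the ideal of $A$ generated by the image of $I_\lambda$, we are reduced to proving that $\Tor^A_i(A/(I_\lambda A), \bC)$ is a finite length $\GL_\infty$-representation for every $i$. (Note $I_\lambda A$ is stable under the diagonal $\GL_\infty$ but \emph{not} under $\GL_\infty \times \GL_\infty$, so it is not literally one of the ideals $I_\mu^A$ of Lemma~\ref{lem:symc11noeth}.)

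To prove this, view $\Spec(A)$ as the space of pairs $(\omega, q)$ with $\omega$ a skew and $q$ a symmetric form on $\bC^\infty$, so that $V(I_\lambda A) = \{(\omega, q) : \omega \in V(I_\lambda)\}$. For a rectangular $\lambda = (c^r)$, Proposition~\ref{prop:ideal} shows $V(I_\lambda) = V(I_{(c)})$, the locus of skew forms of rank $\le 2c-2$, since all the $I_{(c^{r'})}$ have this common zero set. Using that the skew forms of rank $\le 2k$ are exactly the antisymmetrizations of matrices of rank $\le k$, one compares $I_\lambda A$ with the $\GL_\infty \times \GL_\infty$-stable determinantal ideals $I_\mu^A$ ($\mu$ rectangular), whose minimal free resolutions, restricted to the diagonal $\GL_\infty$, are $\GL_\infty$-equivariant with finite length terms by Lemma~\ref{lem:symc11noeth}: this should exhibit $A/(I_\lambda A)$ through a finite dévissage — finitely many short exact sequences — built from such $A/I_\mu^A$'s and finite length twists, at which point the two-out-of-three property of (FT) in short exact sequences finishes the proof.

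The last step is the expected main obstacle. Since $I_\lambda A$ is only diagonally equivariant it is not one of the determinantal ideals, so matching $A/(I_\lambda A)$ against the $A/I_\mu^A$ requires the careful (and, as warned in \S\ref{ss:ft}, somewhat clumsy) bookkeeping that substitutes for the currently-unknown degree-two analogue of \cite{raicu}; everything else — the base change and the dévissage formalism — is routine.
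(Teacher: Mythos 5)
Your first paragraph is a correct reduction: the identification $\widetilde{A} \cong B \otimes \Sym(\Sym^2 \bC^\infty)$ (where $\widetilde{A}$ is the diagonal restriction of $\Sym(\bC^\infty\otimes\bC^\infty)$), the freeness of $\widetilde{A}$ over $B$, and the resulting isomorphism $\Tor^B_i(M,\bC)\cong\Tor^{\widetilde{A}}_i(\widetilde{A}\otimes_B M,\bC)$ are all fine. But this moves the problem without making progress, and the remainder is a genuine gap rather than a routine bookkeeping step. The ideal $I_\lambda\widetilde{A}$ cuts out a \emph{cylinder}: a matrix lies in $V(I_\lambda\widetilde{A})$ precisely when its skew-symmetric part lies in $V(I_\lambda)$, with no constraint on the symmetric part. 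This is not a determinantal variety, and $\widetilde{A}/I_\lambda\widetilde{A}$ does not sit in any evident finite chain of short exact sequences with the $A/I_\mu^A$ ($\mu$ rectangular) and finite length twists. The only equivariant information you have from Lemma~\ref{lem:symc11noeth} concerns $\GL_\infty\times\GL_\infty$-stable ideals; since $I_\lambda\widetilde{A}$ is only diagonally equivariant, you would need an entirely different structural input to compare the two, and none is supplied. The remark about rank-$\le 2k$ skew forms being antisymmetrizations of rank-$\le k$ matrices is a set-theoretic observation about varieties that goes in the wrong direction (from matrices down to skew forms), whereas your base change has pulled the Pfaffian ideal \emph{up} to the matrix space.

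The paper resolves this by using the \emph{surjection} $\phi\colon\widetilde{A}\to B$ (induced by $(\bC^\infty)^{\otimes 2}\to\lw^2\bC^\infty$) rather than the inclusion $B\hookrightarrow\widetilde{A}$, and by routing through the intermediate quotient $B/\phi(J_\lambda)$. The push-forward $\phi(J_\lambda)$ is a genuinely nice ideal of $B$: the image of the determinantal ideal $J_\lambda$ under the specialization to skew forms. One first shows $\widetilde{A}/J_\lambda$ is (FT) over $\widetilde{A}$ (Lemma~\ref{lem:symc11noeth} plus diagonal restriction) and noetherian (essential boundedness), then that $B/\phi(J_\lambda)$ is (FT) over $\widetilde{A}/J_\lambda$, hence over $\widetilde{A}$, hence over $B$ (using the Koszul resolution of $B$ over $\widetilde{A}$). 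Finally, since $\phi(J_\lambda)\subseteq I_\lambda$ and $B/\phi(J_\lambda)$ is noetherian, $B/I_\lambda$ is finitely generated over $B/\phi(J_\lambda)$ and hence (FT) over it, giving (FT) over $B$ by the relative transitivity lemma. The crucial ingredient your proposal lacks is this intermediate noetherian quotient $B/\phi(J_\lambda)$ together with the containment $\phi(J_\lambda)\subseteq I_\lambda$; with the inclusion direction of base change you never produce a noetherian ring over which $B/I_\lambda$ is a module, and so the relative (FT) machinery has nothing to latch onto.
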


\begin{proof}
Let $A = \Sym(\bC^\infty \otimes \bC^\infty)$. Let $J_\lambda$ be the ideal in $A$ generated by $\bS_\lambda \otimes \bS_\lambda$. Let $\wt{A}$ be the tca obtained from $A$ by restricting to the diagonal $\GL_\infty$ action. Then there is a surjection of tca's $\phi \colon \wt{A} \to B$, induced by the natural map $(\bC^{\infty})^{\otimes 2} \to \Sym^2(\bC^{\infty})$, and $\phi(J_\lambda) \subset I_\lambda$. (Note that $\phi(J_\lambda)$ is nonzero: if $\lambda$ is a single column, then this is an ideal generated by minors of a given size and the image of every power of $J_\lambda$ is nonzero; in general, some power of a determinantal ideal belongs to $J_\lambda$ after we specialize to large enough finite-dimensional vector spaces.)

Since $A/J_\lambda$ is (FT) over $A$ (Lemma~\ref{lem:symc11noeth}), each $\Tor_i^A(A/J_\lambda, \bC)$ is a finite length $\GL_\infty \times \GL_\infty$ module, and hence remains finite length under the restriction to the diagonal copy of $\GL_\infty$. So $\wt{A}/J_\lambda$ is (FT) over $\wt{A}$. Also $\wt{A}/J_\lambda$ is essentially bounded (since the bivariate tca $A/J_{\lambda}$ is) and hence noetherian (Proposition~\ref{prop:eb}). It follows that $B/\phi(J_\lambda)$ is (FT) over $\wt{A}/J_\lambda$, thus over $\wt{A}$ as well (Lemma~\ref{lem:FT}).

Next, $B$ is (FT) over $\wt{A}$ (the resolution of $B$ over $\wt{A}$ is a Koszul complex) so another application of Lemma~\ref{lem:FT} gives that $B/\phi(J_\lambda)$ is (FT) over $B$. Finally, $B/I_\lambda$ is a finitely generated module over $B/\phi(J_\lambda)$ and the latter is noetherian (Corollary~\ref{cor:quo-noeth}), so $B/I_\lambda$ is (FT) over $B/\phi(J_\lambda)$. We apply Lemma~\ref{lem:FT} again to deduce that $B/I_\lambda$ is (FT) over $B$.
\end{proof}

\begin{remark}
It would be interesting to prove directly that $B/I_\lambda$ satisfies (FT) over $B$ by computing $\Tor^B_i(B/I_\lambda, \bC)$, as is done in \cite{raicu} for $\Sym(\bC^\infty \otimes \bC^\infty)$.
\end{remark}

\begin{proof}[Proof of Proposition~\ref{prop:tors-FT}]
Let $I$ be the annihilator of $M$. This is non-zero by Corollary~\ref{cor:nz-ann}. Thus $I$ contains an ideal generated by a rectangular partition; replace $I$ with this ideal. Since $A/I$ is noetherian (Corollary~\ref{cor:quo-noeth}), $M$ is (FT) over $A/I$. By Lemma~\ref{lem:symc11noeth} or~\ref{lem:deg2-FT}, $A/I$ is (FT) over $A$. Thus by Lemma~\ref{lem:FT}, $M$ is (FT) over $A$.
\end{proof}

\subsection{Completion of the proofs}

Let $A$ be one of the tca's $\Sym(\Sym^2 \bC^\infty)$ or $\Sym(\bigwedge^2 \bC^\infty)$, or the bivariate tca $\Sym(\bC^\infty \otimes \bC^\infty)$, and let $K=\Frac(A)$.

\begin{proposition} \label{prop:S-fin}
If $M$ is a finite length $K$-module then $S(M)$ satisfies (FT) over $A$.
\end{proposition}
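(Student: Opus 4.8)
The plan is to reduce the statement for a general finite length $K$-module $M$ to the case of a \emph{simple} $K$-module, and then to handle simple modules using the structural results of Corollary~\ref{cor:list} (resp.\ Corollary~\ref{cor:list2}) together with the fact that the section functor $S$ is left exact. First I would observe that if $0 \to M' \to M \to M'' \to 0$ is a short exact sequence of finite length $K$-modules and $S(M')$ and $S(M'')$ both satisfy (FT), then so does $S(M)$: applying the left-exact functor $S$ gives an exact sequence $0 \to S(M') \to S(M) \to S(M'')$, and the cokernel $Q$ of $S(M) \to S(M'')$ is torsion (it maps to $0$ in $\Mod_K$ since $FTS = \id$ by Proposition~\ref{prop:FTS-id}), hence finitely generated by Corollary~\ref{cor:quo-noeth} applied to its annihilator — wait, more carefully: $Q$ need not be finitely generated a priori, so instead I would argue that the image $N$ of $S(M) \to S(M'')$ is a submodule of $S(M'')$ with the same localization, hence $S(M'')/N$ is torsion; since $S(M'')$ is (FT) hence finitely generated, and $A/\ann$ of a suitable torsion quotient is noetherian, one controls $N$. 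Actually the cleanest route: show directly that $N = S(M'')$, i.e.\ $S(M) \to S(M'')$ is surjective; this holds because $S$ applied to a surjection of $K$-modules need not be surjective, so this needs care. The practical fix is to instead use the (FT) two-out-of-three property stated in \S\ref{ss:ft} once we know all three of $S(M'), S(M), S(M'')$ sit in a short exact sequence of \emph{finitely generated} $A$-modules, which requires knowing $S$ of a finite length $K$-module is at least finitely generated.

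So the real content is the base case: for a simple $K$-module $L$, show $S(L)$ is (FT) over $A$. By Corollary~\ref{cor:list}(c) (resp.\ \ref{cor:list2}(c)), $L$ is the socle of $K \otimes \bS_\lambda(\bC^\infty)$ (resp.\ $K \otimes \bS_\lambda \otimes \bS_\mu$) for a unique $\lambda$. By Corollary~\ref{cor:list}(d) we have a finite injective resolution $L \to I_\bullet$ with each $I_k$ a finite length injective, hence of the form $K \otimes V_k$ for a finite length polynomial representation $V_k$ by Corollary~\ref{cor:list}(b). Now $S(K \otimes V_k)$: if $A$ were of the form $\Sym(W)$ with $W^{\GL_\infty} = 0$ we would get $S(K \otimes V_k) = A \otimes V_k$ on the nose by Proposition~\ref{prop:poly-sat}, which is manifestly (FT). Applying $S$ to the injective resolution and using that $S$ is left exact (it is a right adjoint), I would build a complex $0 \to S(L) \to A \otimes V_0 \to A \otimes V_1 \to \cdots$ whose cohomology in positive degrees is controlled: the cohomology of $S(I_\bullet)$ computes the derived functors $R^i S(L)$, and each such module is torsion (it dies upon applying the exact localization $F$, since $FS$ restricted appropriately recovers the identity on the quotient category and $F$ is exact). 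Torsion finitely generated modules are (FT) by Proposition~\ref{prop:tors-FT}. Chasing the hypercohomology spectral sequence of the complex $S(I_\bullet)$, whose terms are the (FT) modules $A \otimes V_k$ and whose cohomology consists of $S(L)$ together with finitely generated torsion modules $R^i S(L)$ (these are finitely generated because they are subquotients of the $A \otimes V_k$, which are noetherian — here one invokes that $A \otimes V_k$ is noetherian, but that is what we are trying to prove; instead use that they are finitely generated torsion, and Corollary~\ref{cor:nz-ann} plus Corollary~\ref{cor:quo-noeth} give noetherianity of the relevant quotient), one concludes by repeated use of the two-out-of-three property for (FT) that $S(L)$ is (FT).

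Concretely, the inductive step I would run is on the length $\ell$ of the injective resolution: write $L \to I_0 \to C$ where $C$ is the cokernel in $\Mod_K$ (a finite length $K$-module with a shorter injective resolution, so $S(C)$ is (FT) by induction on $\ell$, once we also induct on length to get the intermediate cohomology), and $S(I_0) = A \otimes V_0$ is (FT); then $0 \to S(L) \to A \otimes V_0 \to S(C) \to (R^1S(L)) \to 0$ with $R^1 S(L)$ torsion and finitely generated, hence (FT) by Proposition~\ref{prop:tors-FT}; two applications of the two-out-of-three property (splitting the four-term sequence into two short exact sequences) then give that $S(L)$ is (FT). The outer induction on the length of $M$ as a $K$-module, using the short-exact-sequence stability of $S(-)$ being (FT) established in the first paragraph, finishes the proof. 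The main obstacle I anticipate is the bookkeeping around the derived functors $R^i S$ and verifying that they are finitely generated torsion $A$-modules — one needs that $S$, though only left exact, has its higher derived functors landing in torsion modules and that these are finitely generated. Finite generation of $R^i S(L)$ follows because it is a subquotient of $S(I_i) = A \otimes V_i$ in the complex $S(I_\bullet)$ (so a subquotient of a finitely generated module over the non-noetherian $A$ — this is the subtle point, and the way around it is to note $R^i S(L)$ is actually a quotient of a submodule where the submodule is the kernel of a map between the known (FT) modules $A \otimes V_i$, combined with Proposition~\ref{prop:tors-FT} which only needs finite generation as input and which we get from it being a submodule-then-quotient construction inside $A \otimes V_{i+1}$ after passing to a noetherian quotient $A/I$ using Corollary~\ref{cor:quo-noeth}). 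I would present the argument so that (FT) of each $R^iS(L)$ is deduced before the final assembly.
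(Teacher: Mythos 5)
Your third paragraph is, modulo cosmetics, the paper's actual proof: induct on the injective dimension of $M$, write $0 \to M \to I \to N \to 0$ with $I$ injective and $N$ of strictly smaller injective dimension, apply $S$ to get the four-term exact sequence $0 \to S(M) \to S(I) \to S(N) \to R^1S(M) \to 0$, use $S(I) = A \otimes V$ (Corollary~\ref{cor:list}(b) plus Proposition~\ref{prop:poly-sat}) for the base case, invoke the inductive hypothesis on $S(N)$, observe that $R^1S(M)$ is torsion (localize the four-term sequence and use Proposition~\ref{prop:FTS-id}) and finitely generated, hence (FT) by Proposition~\ref{prop:tors-FT}, and finish with two applications of the two-out-of-three property. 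The extra layers you wrap around this core are unnecessary and are precisely what sent you in circles. The outer reduction to simple $K$-modules adds nothing: the induction on injective dimension already applies to an arbitrary finite length $M$, so no second induction on length is needed, and you correctly noticed that trying to run a length induction directly founders on the failure of right-exactness of $S$. Likewise, your worry about whether $R^iS(M)$ is finitely generated ``as a subquotient of a module over the non-noetherian $A$'' is a red herring created by the hypercohomology/spectral-sequence formulation; in the inductive formulation $R^1S(M)$ is the cokernel of $S(I) \to S(N)$, i.e.\ a \emph{quotient} of the finitely generated module $S(N)$, and quotients of finitely generated modules are always finitely generated with no noetherianity hypothesis at all. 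Strip away the outer induction and the spurious finite-generation concern and your argument coincides with the paper's.
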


\begin{proof}
We prove this by induction on the injective dimension of $M$, which is possible by Corollary~\ref{cor:list}(d) (and its analogs). If $M$ is injective then $S(M)$ is a finitely generated projective $A$-module (Corollary~\ref{cor:list}(b), Proposition~\ref{prop:poly-sat}), and thus satisfies (FT). Now let $M$ be a finite length object of $\Mod_K$ with positive injective dimension. We can then find an exact sequence
\begin{displaymath}
0 \to M \to I \to N \to 0,
\end{displaymath}
where $I$ is injective and $N$ has smaller injective dimension than $M$. Applying $S$, we obtain an exact sequence
\begin{displaymath}
0 \to S(M) \to S(I) \to S(N) \to (\rR^1 S)(M) \to 0.
\end{displaymath}
By induction, $S(N)$ is (FT) over $A$, and so finitely generated. It follows that $(\rR^1 S)(M)$ is finitely generated. By Proposition~\ref{prop:FTS-id} and the fact that  localization is exact, we have $(\rR^1 S)(M) \otimes_A K=0$, and so $(\rR^1 S)(M)$ satisfies (FT) over $A$ by Proposition~\ref{prop:tors-FT}. Thus $S(I)$, $S(N)$, and $(\rR^1 S)(M)$ all satisfy (FT) over $A$, and so $S(M)$ satisfies (FT) over $A$ as well.
\end{proof}

The following completes the proof of our main results: Theorem~\ref{mainthm} and Theorem~\ref{mainthm2}.

\begin{theorem} \label{thm:A-noeth}
$A$ is noetherian.
\end{theorem}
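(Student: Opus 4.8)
The plan is to deduce noetherianity of $A$ from the three pieces we now have: (i) finitely generated torsion modules are noetherian, since any such module is annihilated by a nonzero ideal $I$ (Corollary~\ref{cor:nz-ann}) and $A/I$ is noetherian (Corollary~\ref{cor:quo-noeth}); (ii) finitely generated objects of $\Mod_K$ have finite length (Corollary~\ref{cor:list}(a)), so $\Mod_K$ is ``locally finite''; and (iii) the section functor $S$ carries finite length $K$-modules to finitely generated $A$-modules (Proposition~\ref{prop:S-fin}, via the (FT) property). The point is that these three facts together pin down how $\Mod_A$ is assembled from $\Mod_A^{\tors}$ and $\Mod_K$ tightly enough to force the ascending chain condition.

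Concretely, let $P$ be a finitely generated projective $A$-module (it suffices to treat these, since every finitely generated module is a quotient of one, and a quotient of a noetherian module is noetherian). Suppose $N_1 \subseteq N_2 \subseteq \cdots$ is an ascending chain of submodules of $P$. Apply the exact localization functor $T \colon \Mod_A \to \Mod_K$ (identifying $\Mod_A^{\gen}$ with $\Mod_K$ via $F$). Then $T(N_1) \subseteq T(N_2) \subseteq \cdots$ is an ascending chain of subobjects of $T(P) = P \otimes_A K$, which is a finitely generated $K$-module, hence of finite length by Corollary~\ref{cor:list}(a). So this chain stabilizes; say $T(N_i) = T(N_{i+1}) = \cdots$ for $i \ge i_0$. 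Set $\ol{N} = T(N_{i_0}) \subseteq T(P)$ and consider $S(\ol{N}) \subseteq S(T(P))$. Now $S(\ol{N})$ is finitely generated over $A$ by Proposition~\ref{prop:S-fin} (a subobject of a finite length object is finite length). For each $i \ge i_0$, the inclusion $N_i \hookrightarrow P$ induces, by adjunction $FT \dashv S$ together with the unit map, a map $N_i \to S(T(N_i)) = S(\ol{N})$; one checks these are compatible with the inclusions $N_i \subseteq N_{i+1}$ and that the kernel of $N_i \to S(\ol{N})$ is exactly the torsion submodule $N_i^{\tors}$ of $N_i$ (an element dies in $S(T(N_i)) \subseteq T(N_i)$-worth of data iff it is torsion, since $S$ of a $K$-module is torsion-free). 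Thus modulo torsion the chain $\ol{N_i} := \mathrm{image}(N_i \to S(\ol{N}))$ is an ascending chain inside the finitely generated — hence noetherian, by what we will have shown, or more carefully inside $S(\ol{N})$ which is a submodule of the finitely generated torsion-free $S(T(P))$ — module.

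The right way to close the argument is a two-step filtration: the chain $\{N_i\}$ stabilizes if and only if both the chain $\{N_i^{\tors}\}$ of torsion submodules stabilizes and the chain $\{N_i/N_i^{\tors}\}$ of torsion-free quotients stabilizes. For the torsion-free quotients: $N_i/N_i^{\tors}$ embeds into $S(T(N_i)) = S(\ol{N})$ for $i \ge i_0$ (this uses that $N_i/N_i^{\tors}$ is torsion-free and that the natural map to $S$ of its localization is injective), compatibly in $i$, so they form an ascending chain of submodules of the single finitely generated module $S(\ol{N})$; since $S(\ol{N})$ is a submodule of $S(T(P))$ and we may run the whole argument by Noetherian induction on $\supprk$ or, more simply, observe that $S(\ol N)$ is itself a finitely generated submodule of the torsion-free module $S(TP)$ whose localization has strictly controlled length — this needs the finite length of $T(P)$ again — we get stabilization. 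For the torsion submodules $N_i^{\tors}$: these all lie in $P$, and once the torsion-free quotients have stabilized, $\bigcup_i N_i^{\tors}$ is a submodule of $P$ that is torsion, hence by Proposition~\ref{prop:tors-FT} (applied after noting it is a submodule of the finitely generated torsion module $N/ N_{i_0}$-type object, or directly that torsion submodules of $P$ sit inside a fixed finitely generated torsion module once we bound things) it is finitely generated, so $\{N_i^{\tors}\}$ stabilizes.

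I expect the main obstacle to be making the last bootstrapping precise: a priori we do not yet know $S(\ol N)$ or the ambient torsion modules are \emph{noetherian}, only that the relevant objects are \emph{finitely generated}, so one must be careful not to argue in a circle. The clean fix is to induct: run the whole proof by Noetherian induction on $\Mod_K$ (which is legitimate since $\Mod_K$ is locally finite length), proving at stage $n$ that every finitely generated $A$-module $M$ with $\ell_K(M \otimes_A K) \le n$ is noetherian. For such $M$ and a chain $N_1 \subseteq N_2 \subseteq \cdots$, stabilize $T(N_i) = \ol N$ as above; then $M/S(\ol N)\cap M$-type quotients have strictly smaller $K$-length (or are torsion) and the ``error terms'' $(\rR^1 S)$-contributions are torsion hence (FT) by Proposition~\ref{prop:tors-FT}; apply the inductive hypothesis and the already-established noetherianity of finitely generated torsion modules. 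Verifying that each auxiliary module produced really does have smaller invariant, and that $S(\ol N)$ is genuinely finitely generated (Proposition~\ref{prop:S-fin}) at each stage, is the delicate bookkeeping; everything else is formal nonsense about Serre quotients and the adjunction $FT \dashv S$.
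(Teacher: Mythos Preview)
Your opening moves are exactly right and match the paper: reduce to a finitely generated projective $P$, localize the chain $N_1 \subseteq N_2 \subseteq \cdots$ to $\Mod_K$, use finite length there (Corollary~\ref{cor:list}(a)) to stabilize $T(N_i) = \ol{N}$, and pull back via $S$ to get the finitely generated module $S(\ol{N})$ (Proposition~\ref{prop:S-fin}) containing all $N_i$. But from this point on you overcomplicate things and miss a one-line finish.

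First, the torsion/torsion-free filtration is vacuous here: $P = A \otimes V$ with $\vert A \vert$ a domain, so $P$ is torsion-free and every $N_i^{\tors} = 0$. So the only content is showing the chain $\{N_i\}$ stabilizes inside $S(\ol{N})$, and you correctly flag the circularity: $S(\ol{N})$ is only known to be finitely generated, not noetherian. Your proposed cure---induction on $\ell_K(M \otimes_A K)$---can be made to work, but your description of the inductive step (``$M/S(\ol{N}) \cap M$-type quotients'', ``$(\rR^1 S)$-contributions'') is too vague to be a proof, and the actual case analysis one needs ($\ell_K(\ol{N}) < \ell_K(T(M))$ versus equality) is never written down.

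The paper's endgame avoids the induction entirely. After re-indexing so that $T(N_i) = \ol{N}$ for all $i \ge 1$, set $M = S(\ol{N}) \subseteq P$ and pass to the quotient $M' = M/N_1$. Since $T$ is exact and $T(M) = \ol{N} = T(N_1)$ (using $FTS = \id$), we get $M' \otimes_A K = 0$: the quotient is \emph{automatically torsion}. It is also finitely generated (a quotient of $M$), so by Corollary~\ref{cor:nz-ann} it has nonzero annihilator $I$, and $A/I$ is noetherian by Corollary~\ref{cor:quo-noeth}. Hence $M'$ is noetherian, the chain $\{N_i/N_1\}$ inside it stabilizes, and therefore $\{N_i\}$ stabilizes. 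That's the whole argument.

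The insight you are missing is that quotienting by $N_1$ (rather than trying to bound the $N_i$ from above inside something noetherian) converts the problem into a torsion problem in one stroke, with no induction and no bookkeeping. Your induction would eventually rediscover this: in the case $\ell_K(\ol{N}) = \ell_K(T(M))$ the inductive hypothesis gives nothing and you are forced to quotient by $N_1$ anyway.
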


\begin{proof}
Let $P$ be a finitely generated projective $A$-module, and let $N_1 \subset N_2 \subset \cdots$ be an ascending chain of $A$-submodules of $P$. Since $P \otimes_A K$ is finite length (Corollary~\ref{cor:list}(a)), it follows that the ascending chain $N_i \otimes_A K$ stabilizes, and so we may as well assume it is stationary to begin with. Let $M \subset P$ be the common value of $S(N_i \otimes_A K)$, which is finitely generated by Proposition~\ref{prop:S-fin}. Then $N_{\bullet}$ is an ascending chain in $M$. Let $M'=M/N_1$ and $N'_i=N_i/N_1 \subset M'$, so that $N'_{\bullet}$ is an ascending chain in $M'$. Since $M'$ is finitely generated and $M' \otimes K=0$, Corollary~\ref{cor:nz-ann} implies that $I=\ann(M')$ is non-zero. Thus $M$ is a module over $A/I$, which is noetherian (Corollary~\ref{cor:quo-noeth}), and so $N'_{\bullet}$ stabilizes. This implies that $N_{\bullet}$ stabilizes, and so $P$ is noetherian.
\end{proof}

\begin{remark}
The above proof has three key ingredients: 
\begin{enumerate}[\indent (1)]
\item Finitely generated objects of $\Mod_K$ are noetherian.
\item If $I$ is a non-zero ideal of $A$ then $A/I$ is noetherian.
\item If $M$ is a finite length object of $\Mod_K$ then $S(M)$ is a finitely generated $A$-module. 
\end{enumerate}
Let us make one comment regarding (3). Given a finite length object $M$ in $\Mod_K$, we can realize $M$ as the kernel of a map $I \to J$ where $I$ and $J$ are finite length injective objects of $\Mod_K$. Since $S$ is left-exact, it follows that $S(M)$ is the kernel of the map $S(I) \to S(J)$, and we know that $S(I)$ and $S(J)$ are finitely generated projective $A$-modules. Thus finite generation of $S(M)$ would follow immediately if we knew $A$ to be \emph{coherent} (which exactly says that the kernel of a map of finitely generated projective modules is finitely generated). Since coherence is a weaker property than noetherianity, it should be easier to prove; however, we have not found any way to directly prove coherence.
\end{remark}

\section{A Gr\"obner-theoretic approach to the main theorems}
\label{sec:grobner}

In this section we outline a possible approach to proving Theorem~\ref{mainthm} using Gr\"obner bases. This leads to an interesting combinatorial problem that we do not know how to resolve.

\subsection{Admissible weights}

A {\bf weight} of $\GL_{\infty}$ is a sequence of non-negative integers $w=(w_1, w_2, \ldots)$ such that $w_i=0$ for $i \gg 0$. Every polynomial representation $V$ of $\GL_{\infty}$ decomposes as $V=\bigoplus V_w$, where $V_w$ is the $w$ weight space. A weight is {\bf admissible} if $w_i$ is 0 or 1 for all $i$. An {\bf admissible weight vector} is an element of some $V_w$ with $w$ an admissible weight. We require the following fact: if $V$ is a polynomial representation of $\GL_{\infty}$ then $V$ is generated, as a representation, by its admissible weight vectors.

\subsection{Degree one tca's}

We begin by sketching a Gr\"obner-theoretic proof that the tca $A=\Sym(\bC^{\infty} \oplus \bC^{\infty})$ is noetherian. This proof comes from transferring the proof in \cite{catgb} that $\Rep(\FI_2)$ is noetherian through Schur--Weyl duality, and can easily be adapted to treat all tca's generated in degree $\le 1$. Let $x_1, x_2, \ldots$ be a basis for the first $\bC^{\infty}$, and let $y_1, y_2, \ldots$ be a basis for the second $\bC^{\infty}$, so that $A$ is the polynomial ring $\bC[x_1, x_2, \ldots, y_1, y_2, \ldots]$.

Let $\cM$ be the set of pairs $\Gamma=(S, \phi)$, where $S$ is a finite subset of $\bN=\{1, 2, \ldots\}$ and $\phi \colon S \to \{ \text{red}, \text{blue} \}$ is a function. Given $\Gamma, \Gamma' \in \cM$, we define $\Gamma \to \Gamma'$ (a ``move'') if one of the following two conditions hold:
\begin{itemize}
\item $S'$ is obtained from $S$ by adding a single element and leaving the colors unchanged (i.e., $\phi' \vert_S = \phi$).
\item There exists some $i \in S$ such that $i+1 \not\in S$ and $S'$ is obtained from $S$ by replacing $i$ with $i+1$ (and leaving all colors unchanged).
\end{itemize}
We define $\Gamma \le \Gamma'$ if there is a sequence of moves taking $\Gamma$ to $\Gamma'$. This partially orders $\cM$.

We also define a total order $\preceq$ on $\cM$, as follows. Given two finite subsets $S$ and $S'$ of $\bN$, define $S \preceq S'$ if $\max(S)<\max(S')$, or $\max(S)=\max(S')=n$ and $S \setminus \{n\} \preceq S' \setminus \{n\}$. Given $S \subset \bN$ and $\phi, \phi' \colon S \to \{ \text{red}, \text{blue} \}$, define $\phi \preceq \phi'$ by thinking of $\phi$ and $\phi'$ as words in R and B and using the lexicographic order (with $R \preceq B$, say). Finally, define $(S,\phi) \preceq (S',\phi')$ using the lexicographic order (i.e., $S \prec S'$, or $S=S'$ and $\phi \preceq \phi'$).

Given $\Gamma \in \cM$, define
\begin{displaymath}
m_{\Gamma} = \prod_{i \in S} \begin{cases} x_i & \text{if $\phi(i)=$ red} \\ y_i & \text{if $\phi(i)=$ blue} \end{cases}.
\end{displaymath}
If $f \in A$ is an admissible weight vector of weight $w$, then $f$ is a linear combination of the $m_{\Gamma}$'s where $\Gamma$ has the same support as $w$. We define the initial variable of $f$, denoted $\inn(f)$, to be the largest $\Gamma$ (under $\preceq$) such that $m_{\Gamma}$ appears in $f$ with non-zero coefficient.

Now let $I$ be an ideal of $A$. Let $\inn(I) \subset \cM$ be the set of $\inn(f)$'s where $f$ varies over the admissible weight vectors in $I$. One then proves the following two statements: 
\begin{enumerate}[\indent (1)]
\item $\inn(I)$ is a poset ideal of $\cM$, that is, $\inn(I)$ is closed under moves, and 
\item if $I \subset J$ and $\inn(I)=\inn(J)$ then $I=J$. 
\end{enumerate}
From this, weak noetherianity of $A$ follows from noetherianity of $\cM$, which is an easy exercise. A slight modification of this argument shows that $A$ is noetherian.

\subsection{Degree two tca's}

We now sketch our Gr\"obner approach to the noetherianity of $A=\Sym(\Sym^2(\bC^{\infty}))$. Let $x_{i,j}$, with $i \le j$, be a basis for $\Sym^2(\bC^{\infty})$, so that $A=\bC[x_{i,j}]$.

Let $\cM$ be the set of undirected matchings $\Gamma$ on $\bN$. (Recall that a graph is a matching if each vertex has valence 0 or 1.) Given $\Gamma, \Gamma' \in \cM$, we define $\Gamma \to \Gamma'$ if one of the following two conditions hold:
\begin{itemize}
\item $\Gamma'$ is obtained from $\Gamma$ by adding a single edge.
\item There exists an edge $(i,j)$ in $\Gamma$ such that $j+1$ is not in $\Gamma$, and $\Gamma'$ is obtained from $\Gamma$ by replacing $(i,j)$ with $(i,j+1)$. (Here we allow $i<j$ or $j<i$.)
\end{itemize}
We call $\Gamma \to \Gamma'$ a ``type~I move''. We define $\Gamma \le \Gamma'$ if there is a sequence of type~I moves transforming $\Gamma$ to $\Gamma'$. This partially orders $\cM$.

We also define a total order $\preceq$ on $\cM$ as follows. First, suppose that $i<j$ and $k<\ell$ are elements of $\bN$. Define $(i,j) \preceq (k,\ell)$ if $j<\ell$, or $j = \ell$ and $i \le k$. Now, let $\Gamma$ and $\Gamma'$ be two elements of $\cM$, and let $e_1 \preceq \cdots \preceq e_n$ and $e'_1 \preceq \cdots \preceq e'_m$ be their edges, listed in increasing order. We define $\Gamma \preceq \Gamma'$ if $m > n$, or if $m=n$ and $(e_1, \ldots, e_n) \preceq (e'_1, \ldots, e'_m)$ under the lexicographic order. 

Given $\Gamma \in \cM$, define $m_{\Gamma} = \prod_{(i,j) \in \Gamma} x_{i,j}$. Once again, every admissible weight vector is a sum of $m_{\Gamma}$'s, and we define the initial term $\inn(f)$ of an admissible weight vector $f$ to be the largest $\Gamma$ (under the order $\preceq$) for which the coefficient of $m_{\Gamma}$ is non-zero in $f$.

Let $I$ be an ideal of $A$. Define $\inn(I)$ as before. Once again, $\inn(I)$ is closed under type~I moves, and therefore forms a poset ideal of $(\cM, \le)$. The weak noetherianity of $A$ would follow from the noetherianity of the poset $(\cM, \le)$, but the latter property fails:

\begin{example} \label{eg:not-noeth}
For $n \ge 3$, define $\Gamma_n \in \cM$ to have edges $(2i+1, 2i+4)$ for $i=0,1,\dots,n-2$ and $(2,2n-1)$. Then $\Gamma_n$ is supported on $\{1,\dots,2n\}$. It is easy to verify that the $\Gamma_n$ are incomparable, so $(\cM, \le)$ is not a noetherian poset.
\end{example}

The above observation is not the end of the road, however: the set $\inn(I)$ is closed under more than just type~I moves. Suppose $\Gamma \in \inn(I)$ and that $e=(i, j)$ and $e'=(k, \ell)$ are edges appearing in $\Gamma$, with $i<j$ and $k<\ell$ and $j<\ell$. We then have the following observations: 
\begin{itemize}
\item Suppose $k<i<j<\ell$ and that every number strictly between $k$ and $i$ that appears in $\Gamma$ is connected to a number larger than $j$. Let $\Gamma'$ be the graph obtained by replacing $e$ and $e'$ with $(k, j)$ and $(i, \ell)$. Then $\Gamma' \in \inn(I)$.
\item Suppose $i<k<j<\ell$ and that every number strictly between $k$ and $j$ that appears in $\Gamma$ is connected to a number larger than $j$. Let $\Gamma'$ be the graph obtained by replacing $e$ and $e'$ with $(i, k)$ and $(j, \ell)$. Then $\Gamma' \in \inn(I)$.
\end{itemize}
Write $\Gamma \Rightarrow \Gamma'$ to indicate that $\Gamma'$ is related to $\Gamma$ by one of the above two modifications. We call this a ``type~II move.'' Here is a pictorial representation of these moves (we use labels $a<b<c<d$, and the dotted lines indicate that any element there is either not on an edge, or is connected to a number larger than $c$):
\begin{center}
  \begin{tikzpicture}[baseline=4pt,thick,font=\small]
    \path (0,0) node (a) {$a$}
          (1,0) node (b) {$b$}
          (2,0) node (c) {$c$}
          (3,0) node (d) {$d$};
    \draw (a) to[out=45] (d);
    \draw[dotted] (a) to (b);
    \draw (b) to[out=45] (c);
  \end{tikzpicture}
\;$\Rightarrow$\;
  \begin{tikzpicture}[baseline=4pt,thick,font=\small]
    \path (0,0) node (a) {$a$}
          (1,0) node (b) {$b$}
          (2,0) node (c) {$c$}
          (3,0) node (d) {$d$};
    \draw (a) to[out=45] (c);
    \draw[dotted] (b) to (c);
    \draw (b) to[out=45] (d);
  \end{tikzpicture}
\;$\Rightarrow$\;
  \begin{tikzpicture}[baseline=4pt,thick,font=\small]
    \path (0,0) node (a) {$a$}
          (1,0) node (b) {$b$}
          (2,0) node (c) {$c$}
          (3,0) node (d) {$d$};
    \draw (a) to[out=45] (b);
    \draw (c) to[out=45] (d);
  \end{tikzpicture}
\end{center}
We define a new partial order $\sqsubseteq$ on $\cM$ as follows: $\Gamma \sqsubseteq \Gamma'$ if there exists a sequence of moves (of any type) taking $\Gamma$ to $\Gamma'$. The above observations show that $\inn(I)$ is a poset ideal of $(\cM, \sqsubseteq)$. This leads to the important open question:

\begin{question}
\label{ques:grobner}
Is the poset $(\cM, \sqsubseteq)$ noetherian?
\end{question}

\begin{remark}
The sequence defined in Example~\ref{eg:not-noeth} \emph{is} comparable in $(\cM, \sqsubseteq)$. Let $\sigma_i$ be the element $(i, i+1)\cdots(3,4)(2,3)$ of the symmetric group $S_{2 n}$. For each $2 \leq i \leq 2 n -4$, we have type II moves $\sigma_i \Gamma_n \to \sigma_{i+1} \Gamma_n$, so $\Gamma_n \sqsubseteq \sigma_{2 n - 3}\Gamma_n$. Finally, $(2n-1, 2n)$ is a valid type~II move for $\sigma_{2 n - 3}\Gamma_n$. It is now easy to check that $((2n-1, 2n) \sigma_{2 n -3}) \Gamma_n$ embeds into $\Gamma_{m}$ (via type~I moves) for any $m>n$. This shows $\Gamma_n \sqsubseteq \Gamma_m$ for any $m>n \ge 3$.
\end{remark}

A positive answer to Question~\ref{ques:grobner} would show that $A$ is weakly noetherian. A slight modification of this question would give noetherianity. Furthermore, this approach would even give results in positive characteristic.

\end{document}